\newtheorem{lm}{Lemma}[section]
\newtheorem{thm}{Theorem}[section]
\newcounter{saveeqn}%
\title{\Large\bf Crossing-sliding bifurcations in planar $\mathbb{Z}_2$-symmetric Filippov systems \thanks{
This work is financially supported by the National Key R \& D Program of China (No. 29302022YFA1005900). The first author is supported by
NSFC (No. 12271378) and Sichuan Science and Technology Program (No. 2024NSFJQ0008). The third author is supported by
NSFC (No. 12201509).
}
}
\author{Xingwu Chen$^1$, ~~Jiahao Li$^1$,~~Tao Li$^2$~\!\!
\footnote{Corresponding author: Tao Li (litao@swufe.edu.cn)}
\\
{\small 1. School of Mathematics, Sichuan University, Chengdu, Sichuan 610064, P. R. China}\\
{\small 2. School of Mathematics, Southwestern University of Finance and Economics,}\\
  {\small Chengdu, Sichuan 611130, P. R. China}
  }
\date{}
\begin{document}
\maketitle

\begin{abstract}
In this paper we investigate the crossing-sliding bifurcations of planar Filippov systems with $\mathbb{Z}_2$-symmetry. Such bifurcations are triggered by the perturbations of a critical crossing cycle and constitute an important class of discontinuity-induced bifurcations. By constructing transition maps and developing a decomposition theorem of functions to overcome the difficulty of describing bifurcation boundaries in multi-parameter settings, we systematically characterize the codimension-one and codimension-two bifurcation scenarios through the explicit statement of non-degenerate conditions and the presentation of the corresponding bifurcation diagrams. The asymptotic properties of all bifurcation curves are also derived.
\vskip 0.2cm
{\bf 2020 MSC:} 34A36, 34C23, 37G15.

{\bf Keywords:} Filippov system, limit cycle, crossing-sliding bifurcation, tangent point, $\mathbb{Z}_2$-symmetry.
\end{abstract}

\baselineskip 15pt
\parskip 10pt
\thispagestyle{empty}
\setcounter{page}{1}

\section{Introduction}
\setcounter{equation}{0}
\setcounter{lm}{0}
\setcounter{thm}{0}
\setcounter{rmk}{0}
\setcounter{df}{0}
\setcounter{cor}{0}

The emergence of non-smooth dynamical phenomena in real world challenges many fundamental assumptions in the classical theory of dynamical systems. This has motivated the extension of bifurcation theory from smooth systems to non-smooth ones, particularly discontinuous piecewise-smooth differential systems of the form
\begin{equation}\label{generalsystem}
(\dot x, \dot y)=\left\{
\begin{aligned}
&(f^+(x,y;\alpha),g^+(x,y;\alpha)),\qquad (x,y)\in\Sigma^+,\\
&(f^-(x,y;\alpha),g^-(x,y;\alpha)),\qquad (x,y)\in\Sigma^-,
\end{aligned}
\right.
\end{equation}
where $(x,y)\in\mathbb{R}^2$, $\alpha=(\alpha_1,\alpha_2,\cdots,\alpha_m)\in\mathbb{R}^m$, $f^\pm, g^\pm:\mathbb{R}^2\times\mathbb{R}^m\rightarrow\mathbb{R}^2$ are smooth with respect to variables and parameters,
$$\Sigma^+:=\left\{(x, y)\in\mathbb{R}^2: h(x,y;\alpha)>0\right\},\qquad \Sigma^-:=\left\{(x, y)\in\mathbb{R}^2: h(x,y;\alpha)<0\right\},$$
$h: \mathbb{R}^2\times\mathbb{R}^m\rightarrow\mathbb{R}$ is a smooth function having $0$ as a regular value, i.e. the gradient $(h_x,h_y)$ does not vanish on $\Sigma:=\left\{(x, y)\in\mathbb{R}^2: h(x,y;\alpha)=0\right\}$. Usually, $\Sigma$ is named {\it discontinuity boundary} (see e.g.\cite{MD}). In the context of Filippov's solution \cite{AFF}, system (\ref{generalsystem}) is also known as a {\it Filippov system}. Induced by discontinuity, system (\ref{generalsystem}) exhibits a rich spectrum of novel bifurcation phenomena (see e.g. \cite{YAK, MG1, MD}), which absent in smooth differential systems because they typically involve the interactions of an invariant set with the discontinuity boundary.

Among discontinuity-induced bifurcations, {\it sliding bifurcations} have attracted considerable attention as one of the most extensively studied categories. These bifurcations describe the dynamical transitions caused by the perturbations of {\it tangential periodic orbits}, i.e. periodic orbits with a point where the orbit of a subsystem is tangent to the discontinuity boundary. Such point is called a {\it tangent point}. Significant progress has been made in characterizing and understanding sliding bifurcations, thanks to contributions from many researchers. For instance, studies in \cite{YAK,MG1} have addressed the classification of codimension-one sliding bifurcations of system (\ref{generalsystem}), obtaining four distinct types: {\it grazing-sliding, crossing-sliding, multi-sliding}, and {\it switching-sliding bifurcations}; \cite{FLMH} has given the stability of some critical configurations in sliding bifurcations; works in \cite{AC,FLMHXZ,LTXC,FLMHsfsf,CFL} and \cite{AGN,EFEPFT,NTZ,WHW,FC,FC2} have investigated the grazing-sliding bifurcations and crossing-sliding bifurcations of system (\ref{generalsystem}), respectively, focusing on aspects such as the establishment of bifurcation diagrams and the identification of the maximum number of limit cycles emerging from the bifurcations; sliding bifurcations in high-dimensional Filippov systems are studied in \cite{AC,MD,MBPK,JH,ATTRACTOR1,ATTRACTOR2,ABN,ATTRACTOR3,RSHO} and references therein.

The main interest of this paper focuses on the crossing-sliding bifurcations of system (\ref{generalsystem}), also called critical crossing cycle bifurcations \cite{EFEPFT}. These occur when a {\it critical crossing cycle} (see Section 2 for a detail definition) is perturbed. In Section 4.1.4 of \cite{YAK}, it is claimed that there are two possible codimension-one crossing-sliding bifurcations in system (\ref{generalsystem}) with $\alpha\in\mathbb{R}$ if there is a unique tangent point in the critical crossing cycle and it is a {\it regular-fold}, namely at which one vector field exhibits quadratic tangency to $\Sigma$ while the other maintains transversal intersection. The first case is the one depicted in Figure~\ref{generalcodim-1bifurdia}, where the critical crossing cycle existing for $\alpha=0$ bifurcates into a sliding cycle or a crossing cycle as $\alpha$ varies. In the second case, either a sliding cycle and a standard cycle simultaneously bifurcate from the critical crossing cycle, or the critical crossing cycle disappears without giving rise to any new cycles.
However, subsequent work by \cite{EFEPFT} proves that the second case cannot occur.
Building on \cite{YAK, EFEPFT}, further researches have considered the degeneracies in which the intersections between the critical crossing cycle and $\Sigma$ in the codimension-one crossing-sliding bifurcation become tangent points of high multiplicity (see \cite{FC2} for the definition), obtaining several codimension-two bifurcation diagrams of system (\ref{generalsystem}). For instance, \cite{NTZ} (resp. \cite{AGN}) analyzes the case where the tangent point $T_0$ becomes a {\it fold-fold} (resp. {\it regular-cusp}), while \cite{WHW, AGN} studies the case where the crossing point $P$ becomes a regular-fold. Here a fold-fold and a regular-cusp represent two specific types of tangent points of high multiplicity, whose detail definitions are provided in Section 2. Besides, \cite{FC} studies the perturbations of a critical crossing cycle composed of two critical crossing cycles connecting the same fold-fold, giving the sum of the maximum numbers of bifurcating crossing and sliding cycles; \cite{FLMH} derives the stability of some kinds of critical crossing cycles. In general, as the multiplicity of tangent points increases, so does the codimension of the bifurcation, raising the theoretical research difficulty and resulting in relatively scarce analytical results. In \cite{AGN}, it is proved that the maximum number of crossing cycles bifurcating from a critical crossing cycle with one or multiple tangent points of any multiplicity
  is $1$ if the tangent points are regular ones for one of subsystems. Recently, the investigation in \cite{FC2} further examines the perturbations of a critical crossing cycle with one tangent point by allowing the tangent point is of any multiplicity for each subsystem, determining how this multiplicity affects the maximum numbers of bifurcating crossing and sliding cycles.
\begin{figure}
  \begin{minipage}[t]{1.0\linewidth}
  \centering
  \includegraphics[width=6.3in]{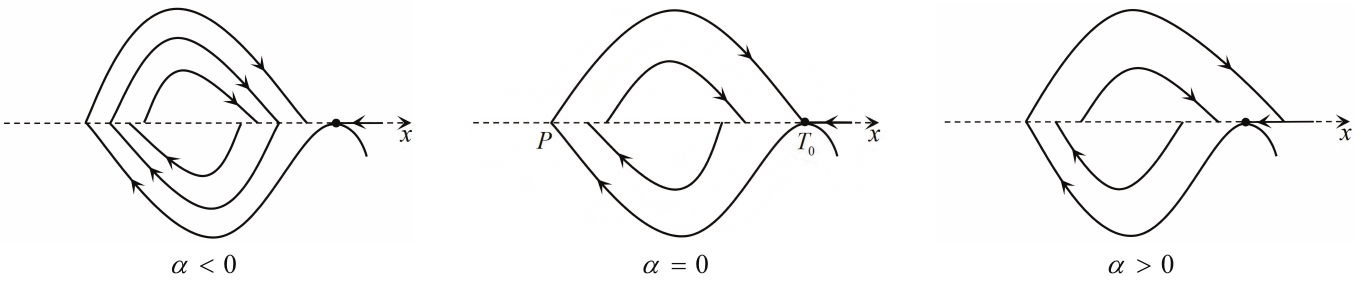}
  \end{minipage}
\caption{{\small Codim-1 crossing-sliding bifurcation of system (\ref{generalsystem}).}}
\label{generalcodim-1bifurdia}
\end{figure}

In this paper, based on previous studies, we extend the investigation of crossing-sliding bifurcations to system (\ref{generalsystem}) possessing $\mathbb{Z}_2$-symmetry with respect to the origin $O$, i.e.
\begin{equation}\label{Z2system}
(\dot x, \dot y)=\left\{
\begin{aligned}
  &(f^+(x,y;\alpha),g^+(x,y;\alpha)),
~~~~&&(x, y)\in\Sigma^+,\\
  &(-f^+(-x,-y;\alpha),-g^+(-x,-y;\alpha)),
~~~~&&(x, y)\in\Sigma^-
\end{aligned}
\right.
\end{equation}
with the discontinuity boundary $\Sigma$ satisfying $h(-x,-y;\alpha)=-h(x,y;\alpha)$. System (\ref{Z2system}) is widely used in scientific and engineering disciplines. Special cases of (\ref{Z2system}) serve as mathematical models for mechanical systems with Coulomb friction \cite{AAK}, discontinuous systems of SD oscillators \cite{CWPGT, TCY}, and control systems with relay mechanism \cite{L, BJV, GP}. Significantly, the results of \cite{GP} provide an evidence that critical crossing cycles can emerge in mathematical models of practical problems which take some special form of system (\ref{Z2system}). Motivated by these reasons, in this paper we conduct a theoretical investigation of the crossing-sliding bifurcations in generic system (\ref{Z2system}). Our goal is to symmetrically characterize the codimension-one and codimension-two bifurcation scenarios through the explicit statement of non-degenerate conditions and the presentation of the corresponding bifurcation diagrams.

As we will see, the crossing-sliding bifurcations of system (\ref{Z2system}) involve crossing cycles, critical crossing cycles, sliding cycles, tangent-equilibrium connections and tangent-tangent connections, see Section 2 for their definitions. Generally speaking, it is difficult to use a unified map to identify different types of cycles and connections, especially for codimension-2 bifurcations. This leads to a challenge: after using different maps to obtain different types of cycles and connections, how can we determine their coexistence? To address this, we develop a function decomposition theorem to describe the bifurcation boundaries, thereby precisely determining the regions where various cycles and connections exist, and ultimately establishing complete bifurcation diagrams for the codimension-one and codimension-two bifurcation scenarios in the sense of $\Sigma$-equivalence \cite{MG1}. Moreover, the asymptotic properties of all bifurcation curves are also derived. Our results not only provide a theoretical tool for interpreting and predicting crossing-sliding bifurcation phenomena in practical problems described by system (\ref{Z2system}), but also extend the investigation of sliding bifurcations in system (\ref{Z2system}) beyond the work of \cite{CFL}, which establishes the bifurcation diagram of a grazing-sliding bifurcation in system (\ref{Z2system}).
Furthermore, since certain configurations of critical crossing cycles considered in this paper can arise from high-codimensional
collision bifurcations of tangent points in system (\ref{Z2system}), our results also contribute to understanding the complex dynamical behaviors associated with such bifurcations.

This paper is organized as follows. In Section 2 we shortly review some basic notions on Filippov systems involved in this paper. In Section 3 we set up the problem of this paper and then state main theorems. After introducing several preliminary lemmas in Section 4, we give the proofs of the main theorems in Sections 5, 6 and 7.

\section{Basic notions}

In this section we give a short review on some basic notions related to the Filippov system (\ref{generalsystem}).
Due to the discontinuity on $\Sigma$, we must adopt new method to define the solutions that reach $\Sigma$ at some time. A used widely rule is the Filippov's convention \cite{AFF}. According to this convention, $\Sigma$ is separated into the {\it crossing set}
$\Sigma^c:=\left\{(x, y)\in\Sigma: Z^+h(x,y;\alpha)\cdot Z^-h(x,y;\alpha)>0\right\}$
and the {\it sliding set}
$\Sigma^s:=\left\{(x, y)\in\Sigma: Z^+h(x,y;\alpha)\cdot Z^-h(x,y;\alpha)\le0\right\}$
as in \cite{YAK}, where $Z^\pm=(f^\pm,g^\pm)$, $Z^\pm h=\langle Z^\pm,\nabla h\rangle$ and $\langle \cdot,\cdot\rangle$ denotes inner product.
Moreover, a sliding segment in the interior of $\Sigma^s$ is said to be {\it stable} if $Z^+h(x,y;\alpha)<0<Z^-h(x,y;\alpha)$ and {\it unstable} if $Z^+h(x,y;\alpha)>0>Z^-h(x,y;\alpha)$.

Since both vector fields $Z^+$ and $Z^-$ are transversal to $\Sigma$ with the same-sign normal components on $\Sigma^c$, the solution reaching $\Sigma$ at a point in $\Sigma^c$ will cross $\Sigma$. On $\Sigma^s$, either both $Z^+$ and $Z^-$ are transversal to $\Sigma$ with the opposite-sign normal components or at least one of normal components is zero. In this case, by the Filippov convex method \cite{AFF}, the solution reaching $\Sigma$ at a point in $\Sigma^s$ is allowed to slide along $\Sigma^s$, and the sliding dynamics obeys the differential system
\begin{equation}\label{generalslidd}
(\dot x,\dot y)=\mu Z^-(x,y;\alpha)+(1-\mu)Z^+(x,y;\alpha),\qquad (x,y)\in\Sigma^s,
\end{equation}
where $\mu\in[0,1]$ is selected to ensure that the vector field of (\ref{generalslidd}) is tangent to $\Sigma$, i.e.
$$\mu Z^-h(x,y;\alpha)+(1-\mu)Z^+h(x,y;\alpha)=0,\qquad (x,y)\in\Sigma^s.$$
In particular, if $Z^-h(x,y;\alpha)-Z^+h(x,y;\alpha)\ne0$, then $\mu={Z^+h(x,y;\alpha)}/({Z^+h(x,y;\alpha)-Z^-h(x,y;\alpha)})$. An equilibrium of (\ref{generalslidd}) in the interior of $\Sigma^s$ is called a {\it pseudo-equilibrium} of (\ref{generalsystem}) as in \cite{YAK}. In these settings, the solutions of (\ref{generalsystem}) that do interact with $\Sigma$ can be constructed by concatenating the standard solutions in $\Sigma^\pm$ and the sliding solutions in $\Sigma$, see \cite{YAK} for more details.

As in \cite{MG1,YAK}, a point $p$ in
$\partial\Sigma^s:=\left\{(x, y)\in\Sigma: Z^+h(x,y;\alpha)\cdot Z^-h(x,y;\alpha)=0\right\}$
can be classified into a {\it boundary equilibrium} of the subsystem in $\Sigma^\pm$ if $Z^\pm(p;\alpha)=0$ and a {\it tangent point} of the subsystem in $\Sigma^\pm$ if $Z^\pm(p;\alpha)\ne0$ and $Z^\pm h(p;\alpha)=0$. A tangent point $p$ of the subsystem in $\Sigma^\pm$ is called a {\it fold} (resp. {\it cusp}) if $(Z^\pm)^2h(p;\alpha)\ne0$ (resp. $(Z^\pm)^2h(p;\alpha)=0$ and $(Z^\pm)^3h(p;\alpha)\ne0$), where $(Z^\pm)^2h$ and $(Z^\pm)^3h$ denote the second-order and the third-order Lie derivatives respectively. In addition, a fold $p$ of the subsystem in $\Sigma^+$ (resp. $\Sigma^-$) is said to be {\it visible} if $(Z^+)^2h(p;\alpha)>0$ (resp. $(Z^-)^2h(p;\alpha)<0$). Reversing the inequality, we can get the definition of an {\it invisible} fold.
Finally, $p\in\partial\Sigma^s$ is called a {\it regular-fold} (resp. {\it regular-cusp}) of (\ref{generalsystem}) if it is a regular point of one subsystem and a fold (resp. cusp) of the other. A {\it fold-fold} of (\ref{generalsystem}) means that it is a fold of both subsystems simultaneously.

In addition to the {\it standard periodic orbits} that entirely lie in $\Sigma^+$ or $\Sigma^-$, system (\ref{generalsystem}) possesses some novel periodic orbits that do not exist in smooth systems (cf. \cite{YAK, MG1, CFL, AGN}), including {\it crossing periodic orbits} and
{\it tangential periodic orbits}. Here a crossing periodic orbit is formed by concatenating the regular orbits of two subsystems only at some points of $\Sigma^c$, and a tangential periodic orbit contains at least one tangent point. Moreover, tangential periodic orbits are classified into
the following three kinds.
\vspace{-12pt}
\begin{itemize}
\setlength{\itemsep}{0mm}
\item[(i)] {\it sliding periodic orbit}, which contains a sliding segment with non-zero length. A sliding periodic orbit is said to be {\it stable} (resp. {\it unstable}) if its sliding segment is stable (resp. unstable).
\item[(ii)] {\it critical crossing periodic orbit}, which occupies $\Sigma^+$ and $\Sigma^-$ and intersects $\Sigma$ only at some points of $\Sigma^c\cup\partial\Sigma^s$.
    \item[(iii)] {\it grazing periodic orbit}, which lies totally in $\Sigma^+\cup\Sigma$ or $\Sigma^-\cup\Sigma$ and intersects $\Sigma$ only at tangent points. Clearly,
 a grazing periodic orbit must be a periodic orbit of a subsystem.
\end{itemize}
\vspace{-12pt}
An isolated standard (resp. crossing, sliding, critical crossing, and grazing) periodic orbit of system (\ref{generalsystem}) in the set of all periodic orbits is called a {\it standard} (resp. {\it crossing, sliding, critical crossing}, and {\it grazing}) cycle.

As in smooth systems, we still refer to an orbit of system (\ref{generalsystem}) connecting one equilibrium (resp. two equilibria) as a {\it homoclinic} (resp. {\it heteroclinic}) {\it connection} or {\it homoclinic} (resp. {\it heteroclinic}) {\it orbit}, although here we allow the equilibrium to be a pseudo-equilibrium or a boundary equilibrium, and the orbit to be formed by concatenating the regular orbits of two subsystems and system (\ref{generalslidd}).
In particular, if a homoclinic orbit contains a sliding segment, it is called a {\it sliding homoclinic orbit}.
Besides, we refer to an orbit of system (\ref{generalsystem}) connecting a tangent point and an equilibrium (resp. a tangent point) as a {\it tangent-equilibrium connection} (resp. {\it tangent-tangent connection}), which plays an important role in the bifurcation analysis of system (\ref{generalsystem}).

\section{Main results}
\setcounter{equation}{0}
\setcounter{lm}{0}
\setcounter{thm}{0}
\setcounter{rmk}{0}
\setcounter{df}{0}
\setcounter{cor}{0}

As introduced in Section 1, system (\ref{Z2system}) is widely used in scientific and engineering disciplines (see e.g. \cite{AAK,CWPGT, TCY,L, BJV}) and the results of \cite{GP} provide an evidence that critical crossing cycles can emerge in mathematical models of practical problems which take some special form of system (\ref{Z2system}). Hence, motivated by these reasons, a meaningful research objective is to establish theoretical results on the crossing-sliding bifurcations in generic system (\ref{Z2system}), thereby revealing how the dynamics near a critical crossing cycle evolve with parameters. To this end, we next formulate the problem precisely.

As will be seen, it is only near two $\mathbb{Z}_2$-symmetric points in the discontinuity boundary $\Sigma$ that $\Sigma$ is relevant to our problem. Therefore, without loss of generality, we always take from now on that $\Sigma$ is the $x$-axis, i.e. $h(x,y;\alpha)=y$. In such setting, we call the subsystem in $\Sigma^+$ (resp. $\Sigma^-$) {\it the upper subsystem} (resp. {\it lower subsystem}) of system (\ref{Z2system}). Besides, we always require that each subsystem is $C^k$ $(k\ge3)$.
Consider the following basic assumption:
\begin{description}
\setlength{\itemsep}{0mm}
\item[(H0)] There exist constants $a>0$ and $\tau_0>0$ such that $\gamma_0^+(\tau_0)=(a,0)$ and $\{\gamma_0^+(t):0<t<\tau_0\}\subset\Sigma^+$, where $\gamma_0^+(t)$ is the solution of the upper subsystem of $(\ref{Z2system})$ with $\alpha=0$ satisfying $\gamma_0^+(0)=(-a,0)$.
\setlength{\itemsep}{0mm}
\end{description}
Under this assumption, the unperturbed system, i.e. system (\ref{Z2system}) with $\alpha=0$, has a periodic orbit $\Gamma_0$ formed by concatenating $\gamma^+_0:=\{\gamma_0^+(t):0\le t\le\tau_0\}$ and its $\mathbb{Z}_2$-symmetric counterpart at $(-a,0)$ and $(a,0)$. Thus our problem can be precisely formulated as {\it investigating how the dynamics of system $(\ref{Z2system})$ near $\Gamma_0$ evolve with $\alpha$ when $\Gamma_0$ is a critical crossing cycle}, namely $\Gamma_0$ is an isolated periodic orbit and at least one of $(-a,0)$ and $(a,0)$ is a tangent point of the upper subsystem with $\alpha=0$.
It is worth noting that we can do the change $(x,y,t)\rightarrow(-x,y,-t)$ if $(a,0)$ is a tangent point of the upper subsystem with $\alpha=0$. Thus, without loss of generality, it is enough to consider the case where
$(-a,0)$ is a tangent point of the upper subsystem with $\alpha=0$, i.e. $g^+(-a,0;0)=0$ and $f^+(-a,0;0)\ne0$.

In Subsection 3.1 (resp. 3.2) we state the non-degenerate conditions that define codimension-one (resp. codimension-two) crossing-sliding bifurcations of system (\ref{Z2system}) and then give the corresponding bifurcation diagrams. Here the bifurcation codimension refers to the minimum number of parameters required to unfold all bifurcation phenomena. To do this, we introduce the following notations:
\begin{equation}\label{notationsdefine}
\begin{aligned}
\lambda(t)&:=\exp\left(\int^{\tau_0}_t(f^+_x+g^+_y)(\gamma^+_0(s);0)ds\right),\qquad t\in[0,\tau_0],\\
\kappa_i&:=\int^{\tau_0}_0\lambda(t)(f^+g^+_{\alpha_i}-g^+f^+_{\alpha_i})(\gamma^+_0(t);0)dt,\qquad i=1,2,\cdots,m,
\end{aligned}
\end{equation}
where the subscripts $x, y$ and $\alpha_i$ denote the corresponding partial derivatives.

\subsection{Codimension-one crossing-sliding bifurcations}\label{codimenonesubsection}
Under the assumption {\bf (H0)}, we further require that the following non-degenerate conditions hold.
\begin{description}
\setlength{\itemsep}{0mm}
\item[(H1)]  $(-a,0)$ is a fold of the upper subsystem of $(\ref{Z2system})$ with $\alpha=0$ satisfying
    $$
    g^+(-a,0;0)=0,\qquad f^+(-a,0;0)g^+_{x}(-a,0;0)>0.
    $$
\item[(H2)] $(a,0)$ is a regular point of the upper subsystem of $(\ref{Z2system})$ with $\alpha=0$ satisfying
    $
    g^+(a,0;0)<0.
    $
\setlength{\itemsep}{0mm}
\end{description}
Due to the $\mathbb{Z}_2$-symmetry, these conditions imply that $(-a,0)$ and $(a,0)$ are regular-folds of system (\ref{Z2system}) with $\alpha=0$, and that $\Gamma_0$ is a critical crossing cycle intersecting $\Sigma$ exactly at the two points. The following theorem describes the bifurcation phenomena resulting from $\Gamma_0$.
\begin{figure}
  \begin{minipage}[t]{1.0\linewidth}
  \centering
  \includegraphics[width=6.3in]{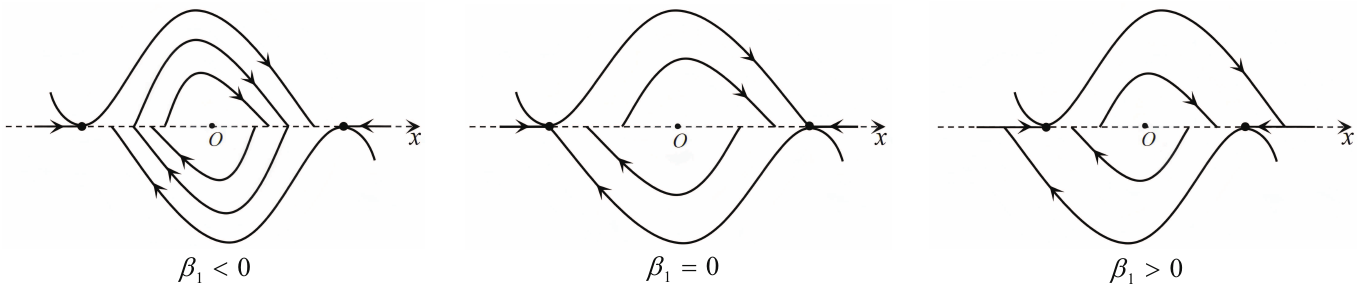}
  \end{minipage}
\caption{{\small Codim-1 crossing-sliding bifurcation of system (\ref{Z2system}) when $f^+(-a,0;0)>0$.}}
\label{codim-1bifurdia1}
\end{figure}
\begin{thm}\label{codim-1-bifur}
Assume that system $(\ref{Z2system})$ with $\alpha=0$ has a critical crossing cycle $\Gamma_0$ characterized by {\bf(H0)}, {\bf(H1)} and {\bf(H2)}.
If $(\theta_1,\theta_2,\cdots,\theta_m)\ne0$, where
\begin{equation}\label{3892fcsfvdfg}
\theta_i=-\frac{\kappa_i}{g^+(a,0;0)}-\frac{g^+_{\alpha_i}(-a,0;0)}{g^+_x(-a,0;0)},
\end{equation}
then for any sufficiently small annulus $\mathcal{A}$ of $\Gamma_0$, there exists a neighborhood $U$ of $\alpha=0$ and a locally diffeomorphism $\beta=(\beta_1,\beta_2,\cdots,\beta_m)=(\rho_1(\alpha),\rho_2(\alpha),\cdots,\rho_m(\alpha))$ with $\rho_i(0)=0$ from $U$ to its range $V$ such that for any $(\beta_2^*,\beta_3^*,\cdots,\beta_m^*)\in V^*$ the bifurcation diagram of system $\left.(\ref{Z2system})\right|_{\mathcal{A}}$ on the hyperplane $(\beta_2,\beta_3,\cdots,\beta_m)=(\beta_2^*,\beta_3^*,\cdots,\beta_m^*)$ is the one shown in Figure~\ref{codim-1bifurdia1} $($resp. Figure~\ref{codim-1bifurdia2}$)$ when $f^+(-a,0;0)>0$ $($resp. $<0$$)$, where $V^*$ is the set satisfying that $(0,\beta_2,\cdots,\beta_m)\in V$ for $(\beta_2,\cdots,\beta_m)\in V^*$.
\end{thm}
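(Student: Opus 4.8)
The plan is to reduce the global bifurcation problem near $\Gamma_0$ to the study of a finite collection of one-dimensional displacement-type maps on $\Sigma$, and then to assemble the resulting bifurcation sets into the diagram of Figure~\ref{codim-1bifurdia1} (and its mirror). First I would fix a small annulus $\mathcal{A}$ around $\Gamma_0$ and transversal sections near the two regular-folds $(-a,0)$ and $(a,0)$. Using that the upper subsystem has a quadratic tangency at $(-a,0)$ (condition \textbf{(H1)}) and is transversal at $(a,0)$ (condition \textbf{(H2)}), I would construct, for $\alpha$ in a neighborhood $U$ of $0$, the relevant local transition maps: the flight map of the upper flow from a neighborhood of $(-a,0)$ to a neighborhood of $(a,0)$ (a smooth map, well-defined by \textbf{(H0)} and smooth dependence on $\alpha$); the $\mathbb{Z}_2$-image of that map giving the lower-flow transition from near $(a,0)$ back to near $(-a,0)$; the local passage map near the fold $(-a,0)$ (which, because of the square-root geometry of a visible/invisible fold, must be written in a Poincaré–Andronov ``half-return'' form, distinguishing the crossing branch from the sliding branch); and the sliding flow \eqref{generalslidd} along the portion of $\Sigma^s$ that opens up near $(-a,0)$ once the tangency is broken. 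The composition of these maps yields the full (possibly sliding) return map whose fixed points are the cycles, while the endpoints of $\Sigma^s$ and the tangent points track the tangent-equilibrium and tangent-tangent connections.

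The key quantitative input is the linearization of the upper flight map along $\gamma_0^+$. Standard variational-equation arguments give that the derivative of this map with respect to a transversal coordinate is governed by $\lambda(t)$ from \eqref{notationsdefine}, and its derivative with respect to $\alpha_i$ is, after using the standard integral formula for the parameter-dependence of solutions (Duhamel/variation-of-parameters together with the $\mathbb{Z}_2$-symmetry), exactly $\kappa_i$ from \eqref{notationsdefine}. Combining this with the displacement produced near the fold $(-a,0)$ — whose leading coefficient is controlled by $g^+_x(-a,0;0)$ and $g^+_{\alpha_i}(-a,0;0)$ — and normalizing by $g^+(a,0;0)\ne 0$ from \textbf{(H2)}, I expect the bifurcation function to have a nonzero $\alpha$-gradient precisely when the vector $(\theta_1,\dots,\theta_m)$ of \eqref{3892fcsfvdfg} is nonzero. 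This is what lets me invoke the implicit function theorem to produce the diffeomorphism $\beta=\rho(\alpha)$, $\rho(0)=0$, straightening the level sets of that displacement so that $\beta_1$ becomes the genuine unfolding parameter and $(\beta_2,\dots,\beta_m)$ are inert; restricting to a hyperplane $(\beta_2,\dots,\beta_m)=(\beta_2^*,\dots,\beta_m^*)\in V^*$ then reduces everything to the one-parameter picture.

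On each side $\beta_1>0$ and $\beta_1<0$ I would then read off which objects survive: on one side the fold at $(-a,0)$ becomes visible and a sliding segment appears, so the return map acquires a sliding branch and $\Gamma_0$ perturbs to a stable sliding cycle (matching the grazing–sliding-type half of the codim-one picture of \cite{YAK,EFEPFT}); on the other side the would-be sliding segment disappears into $\Sigma^c$ and $\Gamma_0$ perturbs to a genuine crossing cycle, with the critical crossing cycle itself persisting only on the codimension-one wall $\beta_1=0$. Uniqueness and hyperbolicity of these cycles follow from the sign of $\lambda(\tau_0)=\exp(\int_0^{\tau_0}(f_x^++g_y^+)\,ds)$ combined with the contraction/expansion of the sliding flow, and the two figures differ only by the sign of $f^+(-a,0;0)$, which flips the orientation of the upper flow through the fold and hence swaps the roles of the two sides of $\beta_1=0$; the $\Sigma$-equivalence claim \cite{MG1} is then immediate since all transition maps are homeomorphisms depending continuously on the parameter.

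The main obstacle will be the bookkeeping at the fold $(-a,0)$: one must carefully match the square-root-type local passage map to the smooth flight map on a neighborhood that shrinks with $\alpha$, show that the composed return map is well-defined and $C^0$ (and piecewise smooth) on the whole annulus $\mathcal{A}$ including its sliding part, and verify that no spurious fixed points or connections appear off the expected branches — in other words, that the local analysis near the two $\mathbb{Z}_2$-symmetric folds genuinely captures the global return dynamics in $\mathcal{A}$. The function-decomposition lemma announced in Section~4 is exactly the tool I would use to express the several different displacement/connection functions in a common form so that their zero sets can be compared and the regions delimited consistently.
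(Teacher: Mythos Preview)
Your overall strategy—building a displacement map whose $\alpha$-gradient is $(\theta_1,\dots,\theta_m)$ and straightening via the implicit function theorem—is correct and matches the paper's, but several pieces of your plan are either unnecessary or mistaken.

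First, the paper's construction is considerably simpler than yours because it fully exploits the $\mathbb{Z}_2$-symmetry: there is no square-root passage map and no separate ``sliding branch'' of a return map. The paper defines a single \emph{smooth} half-return displacement $\mathcal{T}_1(x;\alpha)=\sigma(x,0;\alpha)+x$, where $\sigma$ is the (smooth) transition of the upper flow from near $(-a,0)$ to near $(a,0)$; by symmetry a crossing cycle through $(x,0)$ exists iff $\mathcal{T}_1(x;\alpha)=0$. Since the landing side $(a,0)$ is transversal by \textbf{(H2)}, $\sigma$ is smooth in $x$, and by Lemma~\ref{dafsferwerc} one has $\sigma_x(-a,0;0)=0$ (the fold kills the derivative). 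Thus $\mathcal{T}_{1x}=1+\mathcal O(\|\alpha\|)$, so $\mathcal{T}_1$ has a \emph{unique} smooth zero $\varrho_2(\alpha)$ by IFT. Simultaneously the fold location $-\varrho_1(\alpha)$ is found by IFT from $g^+(x,0;\alpha)=0$. The single bifurcation function is $\rho_1(\alpha)=-\varrho_1(\alpha)-\varrho_2(\alpha)$, whose gradient at $0$ is exactly $(\theta_1,\dots,\theta_m)$. The sign of $\rho_1$ decides whether the unique zero of $\mathcal T_1$ lies in the crossing region, at the fold, or in the sliding region—yielding crossing cycle, critical crossing cycle, or sliding cycle respectively. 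No Poincar\'e--Andronov half-return, no square-root bookkeeping.

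Second, your stability claim is wrong: hyperbolicity and stability of the bifurcating crossing cycle do \emph{not} depend on the sign of $\lambda(\tau_0)$. Because $\sigma_x(-a,0;0)=0$, the half-return $x\mapsto -\sigma(x,0;\alpha)$ has derivative $\mathcal O(\|\alpha\|)$ at its fixed point, so the crossing cycle is always hyperbolically stable for small $\alpha$, regardless of $\lambda$. This is why the codim-1 diagram shows a single stable cycle on each side, not the two-case alternative of \cite{YAK} that your wording anticipates.

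Third, two ingredients you invoke are not used in the codim-1 proof: the function-decomposition lemma (Lemma~\ref{eiowu45cd}) is needed only for the codim-2 theorems, where several bifurcation curves must be compared; and there are no tangent-equilibrium or tangent-tangent connections in the codim-1 picture—those appear only in the fold-fold case of Theorem~\ref{codim-2-fold-bifur1}.
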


\begin{figure}[htp]
  \begin{minipage}[t]{1.0\linewidth}
  \centering
  \includegraphics[width=6.3in]{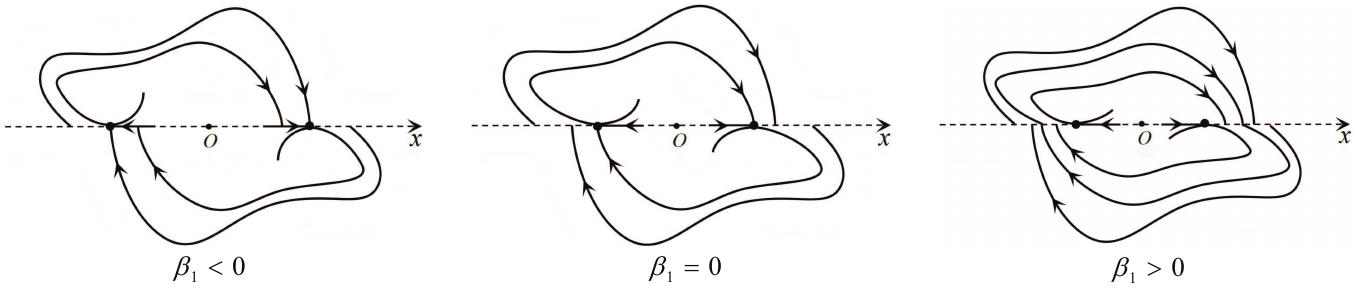}
  \end{minipage}
\caption{{\small Codim-1 crossing-sliding bifurcation of system (\ref{Z2system}) when $f^+(-a,0;0)<0$.}}
\label{codim-1bifurdia2}
\end{figure}

\subsection{Codimension-two crossing-sliding bifurcations}

According to the non-degenerate conditions that define the codimension-one crossing-sliding bifurcations in Subsection~\ref{codimenonesubsection}, a codimension-two crossing-sliding bifurcation of system $(\ref{Z2system})$ occurs only if $(-a,0)$ degenerates to be a cusp of the upper subsystem with $\alpha=0$, or $(a,0)$ degenerates to be a fold of the upper subsystem with $\alpha=0$. In this paper, our attention will be focused on the scenarios where a first return map is well-defined (at least for the one side of $\Gamma_0$). Under this requirement, we can obtain an preliminary classification for codimension-two crossing-sliding bifurcations of system $(\ref{Z2system})$ as follows:
\begin{description}
\item[Case 1:] {\bf(H2)} is remained, but {\bf(H1)} is replaced by
\begin{description}
\vspace{-5pt}
\item[~~~~~(H1)$^\prime$] $(-a,0)$ is a cusp of the upper subsystem of $(\ref{Z2system})$ with $\alpha=0$ satisfying
$$f^+(-a,0;0)\ne0,\qquad g^+(-a,0;0)=0, \qquad g^+_{x}(-a,0;0)=0,\qquad g^+_{xx}(-a,0;0)>0.$$
\end{description}
\item[Case 2:] {\bf(H1)} is remained, but {\bf(H2)} is replaced by
\begin{description}
\vspace{-5pt}
\item[~~~~~(H2)$^\prime$] $(a,0)$ is a fold of the upper subsystem of $(\ref{Z2system})$ with $\alpha=0$ satisfying
    $$g^+(a,0;0)=0,\qquad f^+(a,0;0)g^+_x(a,0;0)>0,\qquad f^+(-a,0;0)f^+(a,0;0)>0.$$
\end{description}
\end{description}
For each case, $\Gamma_0$ is still a critical crossing cycle of system (\ref{Z2system}) with $\alpha=0$.
However, the intersections of it and $\Sigma$ are now regular-cusps (resp. fold-folds) instead of regular-folds for Case 1 (resp. Case 2).

Letting
\begin{equation}\label{ew45fsd}
\begin{aligned}
\zeta_i:=-\frac{2g^+_{\alpha_i}(-a,0;0)}{g^+_{xx}(-a,0;0)},\qquad \eta_i:=-\frac{\kappa_i}{g^+(a,0;0)}-\frac{g^+_{x\alpha_i}(-a,0;0)}{g^+_{xx}(-a,0;0)},
\end{aligned}
\end{equation}
$i=1,2,\cdots,m$, we have the following bifurcation result for Case 1.

\begin{thm}\label{codim-2-cusp-bifur}
Assume that system $(\ref{Z2system})$ with $\alpha=0$ has a critical crossing cycle $\Gamma_0$ characterized by {\bf(H0)}, {\bf(H1)$^\prime$} and {\bf(H2)}. If $(\zeta_1,\zeta_2,\cdots,\zeta_m)$ and $(\eta_1,\eta_2,\cdots,\eta_m)$ is linearly independent, then for any sufficiently small annulus $\mathcal{A}$ of $\Gamma_0$, there exists a neighborhood $U$ of $\alpha=0$ and a locally diffeomorphism $\beta=(\beta_1,\beta_2,\cdots,\beta_m)=(\phi_1(\alpha),\phi_2(\alpha),\cdots,\phi_m(\alpha))$ with $\phi_i(0)=0$ from $U$ to its range $V$ such that for any $(\beta_3^*,\beta_4^*,\cdots,\beta_m^*)\in V^*$ the bifurcation diagram of system $\left.(\ref{Z2system})\right|_{\mathcal{A}}$ on the hyperplane $(\beta_3,\beta_4,\cdots,\beta_m)=(\beta_3^*,\beta_4^*,\cdots,\beta_m^*)$
is the one shown in Figure~\ref{codim-2bifurdia2wca} $($resp. Figure~\ref{codim-2bifurdia2wca345}$)$ when $f^+(-a,0;0)>0$ $($resp. $<0$$)$, where $V^*$ is the set satisfying that $(0,0,\beta_3,\cdots,\beta_m)\in V$ for $(\beta_3,\cdots,\beta_m)\in V^*$, and all bifurcation curves except the axes are quadratically tangent to the $\beta_2$-axis at $(\beta_1,\beta_2)=(0,0)$.
\begin{figure}[h]
  \begin{minipage}[t]{1.0\linewidth}
  \centering
  \includegraphics[width=5.3in]{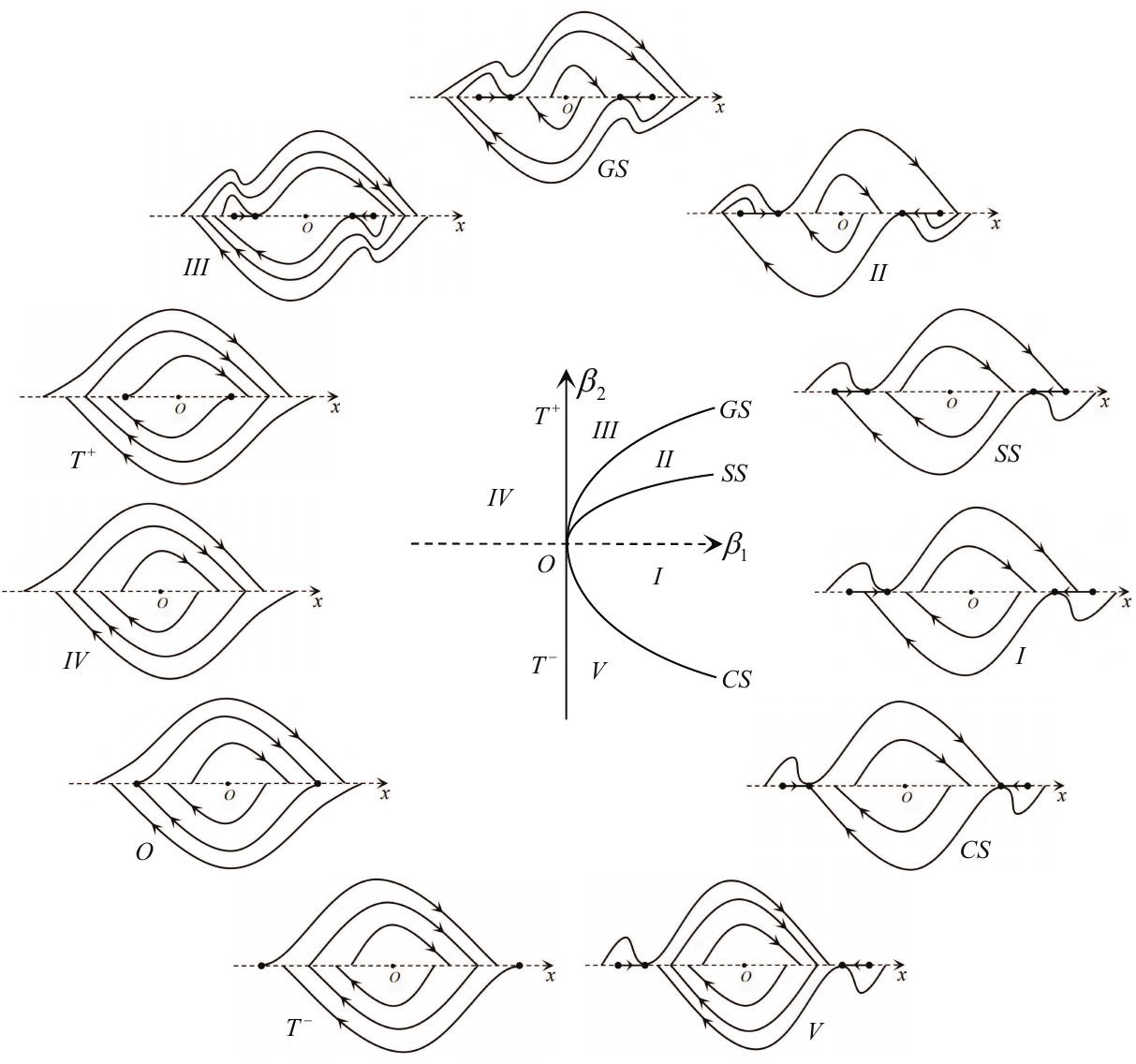}
  \end{minipage}
\caption{{\small Codim-2 crossing-sliding bifurcation of regular-cusp type of system (\ref{Z2system}) when $f^+(-a,0;0)>0$.}}
\label{codim-2bifurdia2wca}
\end{figure}
\begin{figure}[h]
  \begin{minipage}[t]{1.0\linewidth}
  \centering
  \includegraphics[width=5.3in]{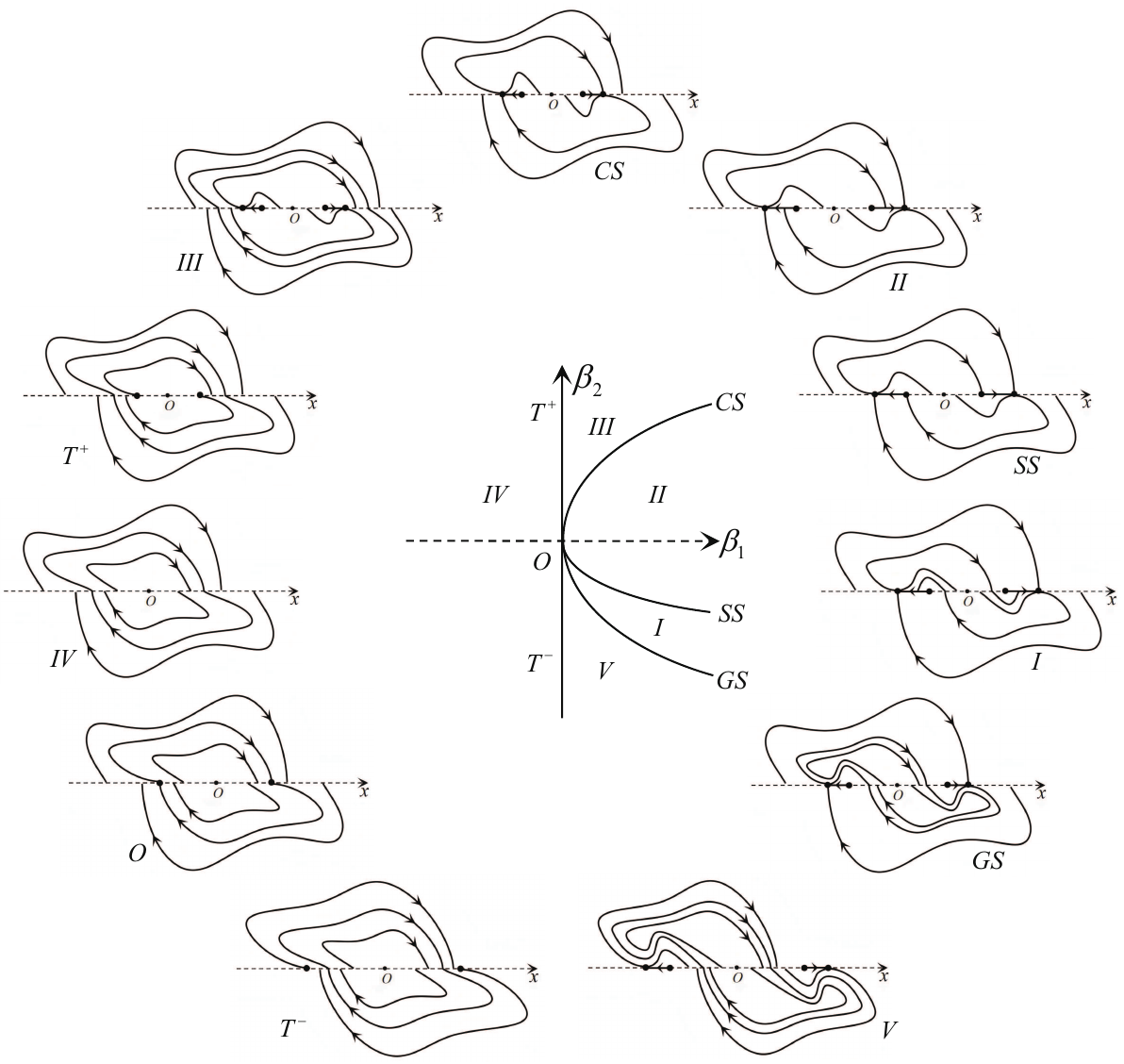}
  \end{minipage}
\caption{{\small Codim-2 crossing-sliding bifurcation of regular-cusp type of system (\ref{Z2system}) when $f^+(-a,0;0)<0$.}}
\label{codim-2bifurdia2wca345}
\end{figure}
\end{thm}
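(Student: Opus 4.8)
The plan is to reduce the codimension-two analysis to the study of a small number of scalar functions that govern the existence of each relevant invariant object near $\Gamma_0$, and then to apply a suitable decomposition/preparation argument to exhibit their joint zero sets. First I would set up local coordinates near the two $\mathbb{Z}_2$-symmetric contact points $(-a,0)$ and $(a,0)$. Near $(-a,0)$, which is now a cusp of the upper subsystem by \textbf{(H1)$'$}, the upper flow has cubic contact with $\Sigma$, so the local return-type map sending a point of $\Sigma$ near $(-a,0)$ forward along the upper orbit back to $\Sigma$ has the Poincaré-map normal form of a cusp; near $(a,0)$, which remains a regular point of the upper subsystem by \textbf{(H2)}, the transition is a diffeomorphism. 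Using the exponential/variational factor $\lambda(t)$ of (\ref{notationsdefine}) and the Melnikov-type integrals $\kappa_i$, I would compute the first-order (in $\alpha$) expansion of the global transition along $\gamma_0^+$ and of the displacement functions. The quantities $\zeta_i$ and $\eta_i$ in (\ref{ew45fsd}) should emerge precisely as the leading coefficients: $\zeta_i$ from the $\alpha$-dependence of the location of the sliding-set boundary near the cusp (via $g^+_{\alpha_i}/g^+_{xx}$), and $\eta_i$ from the combined global return plus the linear-in-$x$ correction at the contact ($\kappa_i/g^+(a,0;0)$ together with $g^+_{x\alpha_i}/g^+_{xx}$).

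Next I would introduce the two ``universal'' parameters $\beta_1,\beta_2$ as the images under the diffeomorphism $\beta=\phi(\alpha)$, chosen so that $\beta_1$ measures $\sum\zeta_i\alpha_i$ (the unfolding of the cusp, i.e. whether the upper fold/regular-fold pair has split off $\Sigma$) and $\beta_2$ measures $\sum\eta_i\alpha_i$ (the unfolding of the return-map fixed-point condition); the linear independence hypothesis is exactly what makes $(\beta_1,\beta_2,\beta_3,\dots,\beta_m)\mapsto\alpha$ a local diffeomorphism, with the remaining $\beta_j$ filling out a complement. On each slice $(\beta_3,\dots,\beta_m)=(\beta_3^*,\dots,\beta_m^*)$ the bifurcation set then lives in the $(\beta_1,\beta_2)$-plane. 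I would express each bifurcation condition — existence of a crossing cycle, of a critical crossing cycle, of a sliding cycle, of a tangent-equilibrium connection, of a tangent-tangent connection — as the vanishing of an explicit function $F_k(\beta_1,\beta_2;\beta^*)$ obtained by composing the local cusp normal form with the global diffeomorphic transition, and then invoke the decomposition theorem advertised in the introduction (and proven in Section 4) to factor each $F_k$ and isolate its principal branch. The cubic contact at the cusp is what produces the characteristic quadratic tangency to the $\beta_2$-axis: solving a condition of the shape $c\,x^3 + (\text{linear in }\beta)\,x + (\text{linear in }\beta) = 0$ for the relevant branch and eliminating $x$ yields bifurcation curves $\beta_1 \sim C\beta_2^2$, which is precisely the asserted asymptotic behavior, and the axes $\{\beta_1=0\}$, $\{\beta_2=0\}$ themselves carry degenerate connections.

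I would then assemble the diagram: determine, in each of the (finitely many) open sectors of the $(\beta_1,\beta_2)$-plane cut out by these curves and the axes, exactly which cycles and connections coexist, using the sign of the displacement function and the stability criteria for sliding segments recalled in Section 2 ($Z^+h<0<Z^-h$ etc.), and checking consistency with the codimension-one picture of Theorem~\ref{codim-1-bifur} on generic one-parameter paths through the origin. The sign of $f^+(-a,0;0)$ flips the orientation of the flow through the cusp, hence swaps visible/invisible-type behavior and the direction of sliding, which is why the two cases produce Figures~\ref{codim-2bifurdia2wca} and \ref{codim-2bifurdia2wca345}; I would treat $f^+(-a,0;0)>0$ in detail and obtain the other by the time-reversing coordinate change $(x,y,t)\mapsto(-x,y,-t)$ already noted in Section 3, or by a direct parallel computation. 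Finally the asymptotics of all curves follow by reading off leading terms in the factored expressions from the decomposition theorem.

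The main obstacle I expect is the coexistence problem flagged in the introduction: the crossing cycles, sliding cycles and the various connections are detected by \emph{different} maps (a first return map exists only on one side of $\Gamma_0$ here, because the cusp side need not admit one), so merely knowing each zero set separately does not tell us which objects are simultaneously present in a given parameter region. Overcoming this is exactly where the function decomposition theorem does the heavy lifting — it must be applied so that the several displacement/connection functions are written over a common domain with compatible principal parts, after which their relative positions (hence the region structure) can be read off. Controlling the interaction of the cubic cusp normal form with the non-polynomial global transition, and verifying the non-degeneracy needed to apply the decomposition theorem at second order (this is where linear independence of $(\zeta_i)$ and $(\eta_i)$, rather than mere non-vanishing, is essential), will be the technically delicate part.
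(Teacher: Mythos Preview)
Your high-level picture (introduce $\beta_1\sim\sum\zeta_i\alpha_i$ and $\beta_2\sim\sum\eta_i\alpha_i$, use linear independence to get a diffeomorphism, obtain curves $\beta_1\sim C\beta_2^2$) is right, but the mechanism you describe does not match what actually happens in this case, and several ingredients you invoke belong to Theorem~\ref{codim-2-fold-bifur1}, not here.

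First, the global first return map \emph{is} well defined on both sides of $\Gamma_0$: hypothesis \textbf{(H2)} says $(a,0)$ is a regular point with $g^+(a,0;0)<0$, so the transition $\sigma(x,0;\alpha)$ of Lemma~\ref{dafsferwerc} from $I^-$ near $(-a,0)$ to $I^+$ near $(a,0)$ is a diffeomorphism, and $\mathcal{T}_1(x;\alpha)=\sigma(x,0;\alpha)+x$ together with the $\mathbb{Z}_2$-symmetry gives a single displacement map with $\partial_x\mathcal{T}_1\approx 1$. Crossing cycles, critical crossing cycles and sliding cycles are all read off from $\mathcal{T}_1$ evaluated at the appropriate $x$-values; there is no coexistence problem requiring separate maps, and the decomposition theorem (Lemma~\ref{eiowu45cd}) is \emph{not} used in this proof at all --- it enters only in the fold-fold case of Theorem~\ref{codim-2-fold-bifur1}. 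Likewise, tangent-equilibrium and tangent-tangent connections do not appear in the regular-cusp bifurcation diagram; those belong to the fold-fold scenario.

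Second, your ``cubic'' picture is off. What is analyzed is $g^+(x,0;\alpha)$ near $x=-a$, which is \emph{quadratic} in $x$ (since $g^+_{xx}(-a,0;0)>0$): a Malgrange/Weierstrass-type preparation gives $\phi_1(\alpha)$ and a center $\xi_1(\alpha)$ so that the two fold points sit at $x=\pm\sqrt{\phi_1(\alpha)}-\xi_1(\alpha)$ when $\phi_1>0$. One then sets $\phi_2(\alpha):=\mathcal{T}_1(-\xi_1(\alpha);\alpha)$ and, by Taylor expansion with integral remainder, obtains $\mathcal{T}_1(\pm\sqrt{\phi_1}-\xi_1;\alpha)=\phi_2\pm(1+o(1))\sqrt{\phi_1}$ and a similar expression at the backward image of the visible fold. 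Since $\mathcal{T}_1$ is strictly monotone with a unique zero, comparing these values to $0$ immediately gives the crossing/critical crossing/sliding regions and the three bifurcation curves $\beta_2=(-1+o(1))\sqrt{\beta_1}$, $(1+o(1))\sqrt{\beta_1}$, $(3+o(1))\sqrt{\beta_1}$ for $\beta_1>0$. No cubic-root branches are solved; the $\sqrt{\phi_1}$ comes from the fold locations, not from inverting the return map.
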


Letting
\begin{equation}\label{4scsrve4}
\mu_i:=-\frac{g^+_{\alpha_i}(-a,0;0)}{g^+_x(-a,0;0)}-
\frac{g^+_{\alpha_i}(a,0;0)}{g^+_x(a,0;0)},
\end{equation}
$i=1,2,\cdots,m$,
under an additional
 non-degenerate condition to trigger a codimension-two bifurcation:
 \begin{description}
\setlength{\itemsep}{0mm}
\item[(H3)] $\Delta:=g^+_x(-a,0;0)\lambda(0)-g^+_x(a,0;0)>0$, where $\lambda(t)$ is defined in (\ref{notationsdefine}),
\end{description}
we have the following bifurcation result for Case 2

\begin{thm}\label{codim-2-fold-bifur1}
Assume that system $(\ref{Z2system})$ with $\alpha=0$ has a critical crossing cycle $\Gamma_0$ characterized by {\bf(H0)}, {\bf(H1)}, {\bf(H2)$^\prime$} and {\bf(H3)}. If $(\mu_1,\mu_2,\cdots,\mu_m)$ and $(\kappa_1,\kappa_2,\cdots,\kappa_m)$ are linearly independent, then for any sufficiently small annulus $\mathcal{A}$ of $\Gamma_0$, there exists a neighborhood $U$ of $\alpha=0$ and a locally diffeomorphism $\beta=(\beta_1,\beta_2,\cdots,\beta_m)=(\varphi_1(\alpha),\varphi_2(\alpha),\cdots,\varphi_m(\alpha))$ with $\varphi_i(0)=0$ from $U$ to its range $V$ such that for any $(\beta_3^*,\beta_4^*,\cdots,\beta_m^*)\in V^*$ the bifurcation diagram of system $\left.(\ref{Z2system})\right|_{\mathcal{A}}$ on the hyperplane $(\beta_3,\beta_4,\cdots,\beta_m)=(\beta_3^*,\beta_4^*,\cdots,\beta_m^*)$ is the one shown in Figure~\ref{codim-2bifurdia2folddia} $($resp. Figure~\ref{codim-2bifurdia2folddia2334}$)$ when $f^+(-a,0;0)>0$ $($resp. $<0)$, where $V^*$ is the set satisfying that $(0,0,\beta_3,\cdots,\beta_m)\in V$ for $(\beta_3,\cdots,\beta_m)\in V^*$, and all bifurcation curves except the axes are quadratically tangent to the $\beta_1$-axis at $(\beta_1,\beta_2)=(0,0)$.
\begin{figure}[h]
  \begin{minipage}[t]{1.0\linewidth}
  \centering
  \includegraphics[width=5.5in]{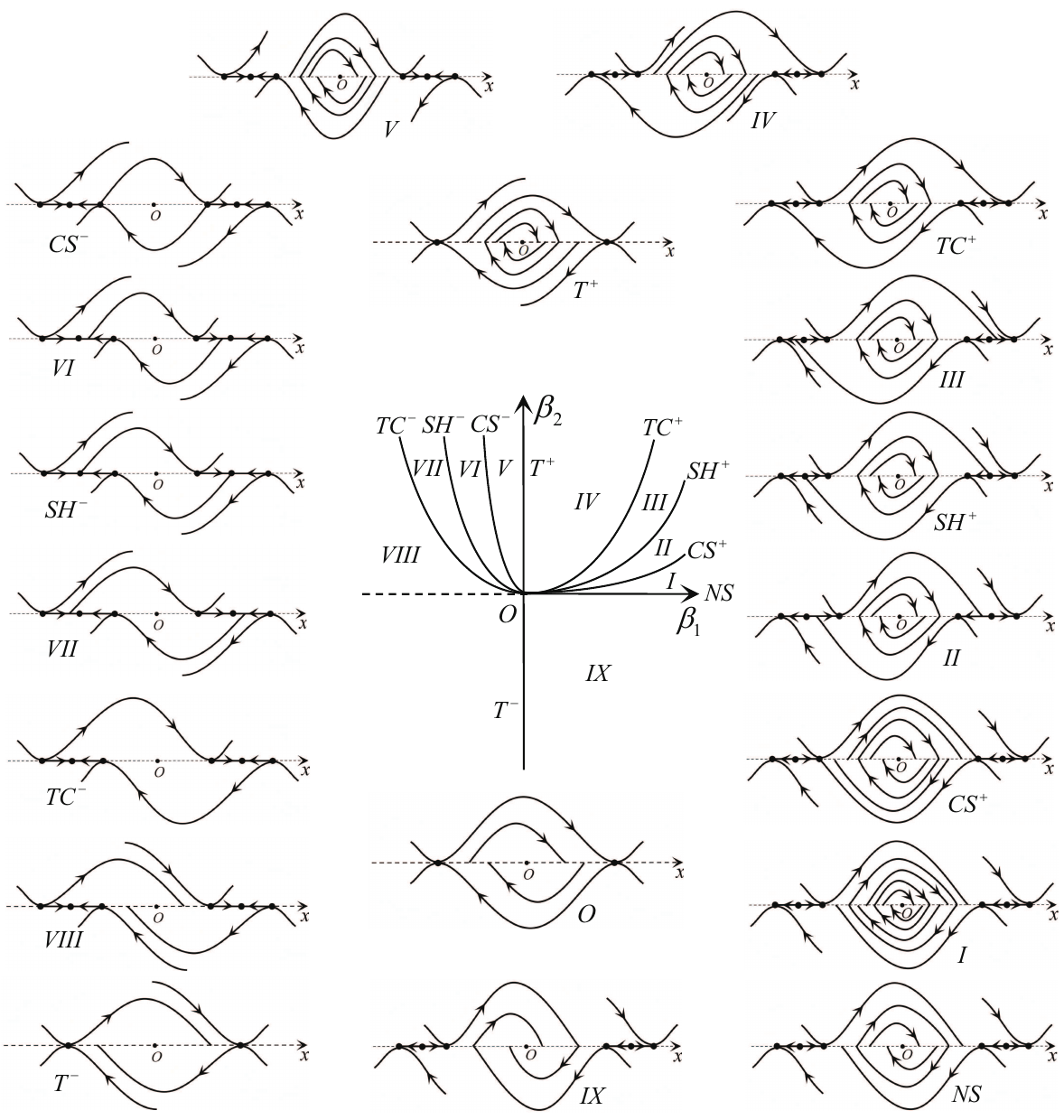}
  \end{minipage}
\caption{{\small Codim-2 crossing-sliding bifurcation of fold-fold type of system (\ref{Z2system}) when $f^+(-a,0;0)>0$.}}
\label{codim-2bifurdia2folddia}
\end{figure}
\begin{figure}[h]
  \begin{minipage}[t]{1.0\linewidth}
  \centering
  \includegraphics[width=5.5in]{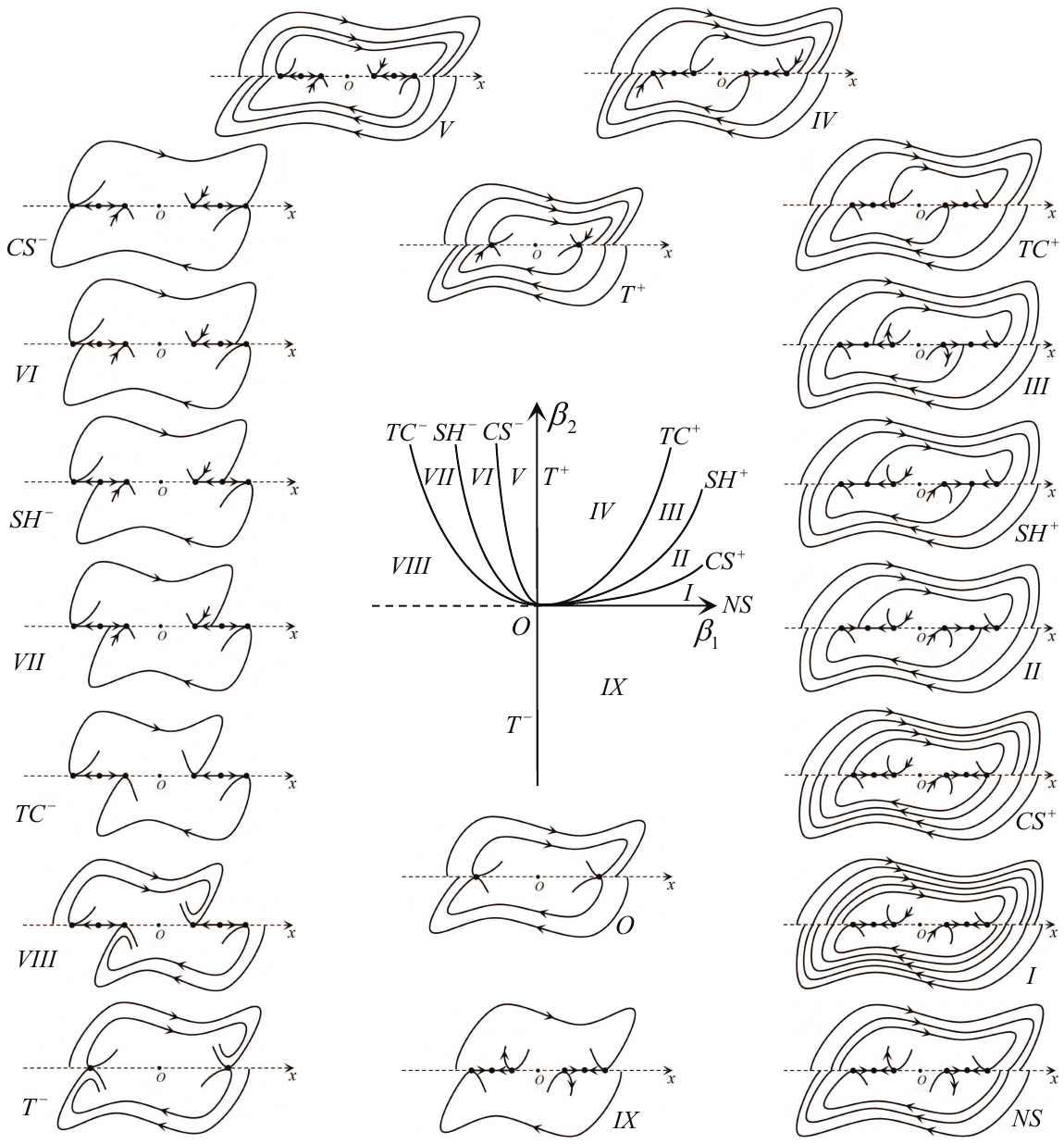}
  \end{minipage}
\caption{{\small Codim-2 crossing-sliding bifurcation of fold-fold type of system (\ref{Z2system}) when $f^+(-a,0;0)<0$.}}
\label{codim-2bifurdia2folddia2334}
\end{figure}
\end{thm}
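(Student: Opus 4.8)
The plan is to convert the return dynamics near $\Gamma_0$ into a two-parameter family of one-dimensional correspondences supplemented by finitely many scalar conditions, to exploit the $\mathbb{Z}_2$-symmetry so that only the half of $\Gamma_0$ lying in $\Sigma^+\cup\Sigma$ has to be analysed, and then to apply the function decomposition theorem of Section~4 to describe the bifurcation set. First I would carry out the local analysis near the two points $(\mp a,0)$. Combining {\bf(H1)} and {\bf(H2)$^\prime$} with $f^+(-a,0;0)f^+(a,0;0)>0$ yields $g^+_x(-a,0;0)g^+_x(a,0;0)>0$, so that for $\alpha=0$ both folds are visible for the upper subsystem and $\Sigma$ is of crossing type on each side of both points. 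For small $\alpha$ the tangent point of the upper (resp. lower) subsystem near $(a,0)$ is located at $x=\hat a(\alpha)$ (resp. $x=\check a(\alpha)$) with $\hat a(\alpha)-\check a(\alpha)=\sum_i\mu_i\alpha_i+O(|\alpha|^2)$, so that a sliding segment of length $O(|\alpha|)$ opens between them, together with a $\mathbb{Z}_2$-symmetric one near $(-a,0)$. Using the $C^k$ ($k\ge3$) regularity together with the parameter-dependent normal form of a visible fold, I would record the local maps describing the passage past each perturbed fold, distinguishing transversal crossing, arrival at a tangent point, and arrival on the sliding segment followed by sliding to a tangent point.

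Next I would fix small transversal sections $N_\mp$ near $(\mp a,0)$ and let $Q(\cdot;\alpha)\colon N_-\to N_+$ be the transition map of the upper subsystem along $\gamma^+_0$; by the variational equation of $(\ref{Z2system})$, as in the preliminary lemmas of Section~4, the linearization of $Q$ along the base orbit equals $\lambda(0)$ and its Melnikov-type $\alpha$-derivatives are governed by the $\kappa_i$. Since $\Sigma$ is tangent to $\gamma^+_0$ at $(a,0)$, the flow map from $N_+$ down to $\Sigma$ has a fold singularity there, which is exactly the source of a square root in the return correspondence. Composing the fold map at $(-a,0)$, the map $Q$, the fold map $N_+\to\Sigma$, and their $\mathbb{Z}_2$-symmetric copies, the existence of crossing cycles, critical crossing cycles, sliding cycles, tangent--equilibrium connections and tangent--tangent connections of $\left.(\ref{Z2system})\right|_{\mathcal{A}}$ is reduced to the solvability of a half-return displacement equation together with finitely many sign and zero conditions (locating the tangent points and the sliding segments and their images); here {\bf(H3)}, namely $\Delta>0$, is the non-degeneracy condition that makes the scenario genuinely codimension two and fixes the monotonicity of the half-return map, which in turn bounds the number of coexisting cycles.

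The decisive step is to bring all of these conditions to a common form. I would feed the half-return displacement function (together with the auxiliary functions above) into the decomposition theorem of Section~4 to split off the $\alpha$-dependence, showing that every bifurcation condition depends on $\alpha$, to the order that matters, only through two combinations of the $\alpha_i$ whose linear parts are $\sum_i\mu_i\alpha_i$ --- the relative splitting of the two folds, hence the width and position of the sliding segments --- and $\sum_i\kappa_i\alpha_i$ --- the return displacement of the half-orbit --- up to higher-order terms and an invertible linear change. Their linear independence then lets me take $\beta_1,\beta_2$ to be these two combinations and $\beta_3,\dots,\beta_m$ any $C^{k-1}$ completion, so that $\beta=\varphi(\alpha)$ is a local diffeomorphism by the implicit function theorem. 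Because the tangency at $(a,0)$ is quadratic, each non-trivial bifurcation condition becomes, in the new coordinates, a relation $\beta_2=c_j\beta_1^2+o(\beta_1^2)$; it then remains to compute the coefficients $c_j$, to order these curves together with the two coordinate axes, to identify which curve carries which object (critical crossing cycle, sliding cycle, or connection), and to distinguish the two orientations of the half-orbit according to the sign of $f^+(-a,0;0)$. Reading off the resulting regions and invoking $\Sigma$-equivalence gives Figure~\ref{codim-2bifurdia2folddia} when $f^+(-a,0;0)>0$ and Figure~\ref{codim-2bifurdia2folddia2334} when $f^+(-a,0;0)<0$, and the relations $\beta_2=c_j\beta_1^2+o(\beta_1^2)$ yield the asymptotics of the bifurcation curves and their quadratic tangency to the $\beta_1$-axis.

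The main obstacle is this last step: the bifurcating objects are genuinely different --- crossing versus sliding cycles, cycles versus connections --- and are naturally detected by different maps, so making their defining conditions simultaneously comparable through the decomposition theorem, and then extracting the correct signs $c_j$ and the correct ordering of the curves so that every region matches the prescribed phase portrait, is where the real work lies. A secondary difficulty, requiring estimates uniform in the small parameters, is the bookkeeping near the two fold--fold points, where for the returning half-orbit one must correctly decide among transversal crossing, arrival at a tangent point, and entry into the $O(|\alpha|)$ sliding segment.
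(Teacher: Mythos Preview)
Your proposal is correct and follows essentially the same route as the paper: reduce via $\mathbb{Z}_2$-symmetry to a half-return problem, introduce two essential parameters with linear parts $\sum_i\mu_i\alpha_i$ and $\sum_i\kappa_i\alpha_i$, apply the decomposition lemma of Section~4 repeatedly, and obtain each nontrivial bifurcation curve in the form $\beta_2=c_j\beta_1^2+o(\beta_1^2)$. The one technical simplification in the paper worth noting is that, instead of composing with fold maps near the tangent points (which, as you say, injects square roots into the correspondence), it places an auxiliary transversal section $\Pi$ at an interior point $(b,c)\in\gamma_0^+$ (Lemma~\ref{cnkrcdwerc}) and works with the \emph{smooth} displacement $\mathcal{T}_2(x;\alpha)=\sigma^+(x,0;\alpha)-\sigma^-(-x,0;\alpha)$, whose Morse critical point (nondegenerate precisely by {\bf(H3)}) produces the two zero branches $x^\star_\pm=\pm\sqrt{\varphi_2(\alpha)}-\vartheta_2(\alpha)$ directly; keeping all auxiliary maps smooth in this way streamlines the repeated use of Lemma~\ref{eiowu45cd} that yields and orders the five quadratic coefficients $\vartheta_4<\vartheta_6<\vartheta_5<\vartheta_7<\vartheta_3$.
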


In Theorem~\ref{codim-2-fold-bifur1}, an additional condition {\bf (H3)} is required to trigger a codimension-two bifurcation. Strictly speaking, it suffices to assume that $\Delta\ne0$. The stronger condition {\bf (H3)} is imposed, since the bifurcation result for $\Delta<0$ can be derived from the result presented herein via the transformation $(x,y,t)\rightarrow(x,-y,-t)$. In fact, when $f^+(-a,0;0)>0$ (resp. $<0$), $\Delta>0$ means that $\Gamma_0$ is internally unstable (resp. externally stable) and $\Delta<0$ means that $\Gamma_0$ is internally stable (resp. externally unstable) as we will see.

\section{Preliminary lemmas}
\setcounter{equation}{0}
\setcounter{lm}{0}
\setcounter{thm}{0}
\setcounter{rmk}{0}
\setcounter{df}{0}
\setcounter{cor}{0}

To prove the main theorems, we give some preliminary results in this section.

\begin{lm}\label{cnkrcdwerc}
Assume that the upper subsystem of $(\ref{Z2system})$ with $\alpha=0$ satisfies {\bf(H0)}. Then there is a point $(b,c)\in\gamma_0^+\cap\Sigma^+$ and a constant $\delta>0$ such that $g^+(b,c;0)<0$, and $(b,c)$ is the unique intersection of $\Pi:=\{(x,c): |x-b|<\delta\}$ and $\gamma_0^+$. Moreover, there exists a constant $\varepsilon_1>0$, a neighborhood $U_1$ of $\alpha=0$ and $C^k$ functions $\tau^+(x,y;\alpha)$ and $\sigma^+(x,y;\alpha)$ $(resp.~\tau^-(x,y;\alpha)$ and $\sigma^-(x,y;\alpha))$ defined for $\|(x+a,y)\|<\varepsilon_1~(resp.~\|(x-a,y)\|<\varepsilon_1)$ and $\alpha\in U_1$ such that $\tau^+(-a,0;0)=\tau^+_0$ $(resp.~\tau^-(a,0;0)=\tau^-_0)$, $\sigma^+(-a,0;0)=b$ $(resp.~\sigma^-(a,0;0)=b)$ and the forward $(resp.~backward)$ orbits of the upper subsystem of $(\ref{Z2system})$ originating from $\{(x,y):\|(x+a,y)\|<\varepsilon_1\}$ $(resp.~ \{(x,y):\|(x-a,y)\|<\varepsilon_1\})$ can reach $\Pi$ at $(\sigma^+(x,y;\alpha),c)$ $(resp.~ (\sigma^-(x,y;\alpha),c))$ after a finite time $t=\tau^+(x,y;\alpha)$ $(resp.~t=\tau^-(x,y;\alpha))$ for $\alpha\in U_1$, where
$\tau^+_0>0$ $(resp.~\tau^-_0<0)$ is the travelling time of $\gamma_0^+$ from $(-a,0)$ $(resp.~(a,0))$ to $(b,c)$.
\end{lm}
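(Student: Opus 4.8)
The plan is to construct the point $(b,c)$ on the interior arc $\gamma_0^+\cap\Sigma^+$ and then obtain the flow maps $\tau^\pm,\sigma^\pm$ by applying the standard flow-box / transversality arguments on a cross-section through that point. First I would invoke \textbf{(H0)}: the arc $\{\gamma_0^+(t):0<t<\tau_0\}$ lies entirely in the open set $\Sigma^+$ and is compact after closing it up, so we may pick any interior time $t^*\in(0,\tau_0)$ and set $(b,c):=\gamma_0^+(t^*)$ with $c>0$. The condition $g^+(b,c;0)<0$ is \emph{not} automatic; to secure it I would argue that along $\gamma_0^+$ the $y$-coordinate starts at $0$ at $t=0$, returns to $0$ at $t=\tau_0$, and (because $(-a,0)$ is a fold with the orbit immediately entering $y>0$) must attain a positive maximum; by Rolle/continuity there is an interior time where $\dot y=g^+(\gamma_0^+(t);0)$ is negative, and I would \emph{choose} $t^*$ to be such a time. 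Fixing $(b,c)$ this way, the horizontal segment $\Pi=\{(x,c):|x-b|<\delta\}$ is, for $\delta$ small, a local transversal to $\gamma_0^+$ at $(b,c)$ precisely because $g^+(b,c;0)<0\neq 0$ means the vector field is not horizontal there; shrinking $\delta$ further guarantees $(b,c)$ is the unique intersection of $\Pi$ with the (compact, embedded) arc $\gamma_0^+$, and that $g^+<0$ on all of $\Pi$ and a neighborhood of it.

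Next I would produce $\tau^+$ and $\sigma^+$. The forward orbit of the upper subsystem from $(-a,0)$ reaches $(b,c)\in\Pi$ at time $\tau_0^+:=t^*>0$, transversally to $\Pi$. Consider the map $(x,y,t;\alpha)\mapsto \Phi^+_t(x,y;\alpha)$, the flow of the upper subsystem (well-defined and $C^k$ for $(x,y)$ near $(-a,0)$, $t$ near $\tau_0^+$, $\alpha$ near $0$, since $\Sigma^+$ is open and the relevant orbit segment stays in it by continuous dependence). Write $\Phi^+_t=(\Phi^{+,1}_t,\Phi^{+,2}_t)$ and define $F(x,y,t;\alpha):=\Phi^{+,2}_t(x,y;\alpha)-c$. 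Then $F(-a,0,\tau_0^+;0)=0$ and $\partial F/\partial t$ at that point equals $g^+(b,c;0)<0\neq 0$, so the implicit function theorem yields a $C^k$ function $\tau^+(x,y;\alpha)$ with $\tau^+(-a,0;0)=\tau_0^+$ solving $F=0$; setting $\sigma^+(x,y;\alpha):=\Phi^{+,1}_{\tau^+(x,y;\alpha)}(x,y;\alpha)$ gives the first-hitting abscissa on $\Pi$, with $\sigma^+(-a,0;0)=b$, all defined on $\|(x+a,y)\|<\varepsilon_1$ for $\varepsilon_1,U_1$ small. For $\tau^-,\sigma^-$ one does the same with the \emph{backward} flow from $(a,0)$: the backward orbit of the upper subsystem from $(a,0)$ follows $\gamma_0^+$ in reverse and reaches $(b,c)$ at the negative time $\tau_0^-:=t^*-\tau_0<0$; the identical implicit-function argument (now the $t$-derivative of the backward-flow's second component is $-g^+(b,c;0)>0\neq0$) produces $C^k$ functions $\tau^-(x,y;\alpha)$, $\sigma^-(x,y;\alpha)$ near $(a,0)$ with the stated initial values.

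Finally I would note the outputs genuinely land in $\Pi$: since $\sigma^\pm(-a,0;0)=\sigma^\pm(a,0;0)=b$, shrinking $\varepsilon_1$ keeps $|\sigma^\pm(x,y;\alpha)-b|<\delta$, and one further checks these are \emph{first} hitting times by using transversality to exclude an earlier crossing of the line $y=c$ on the short orbit segment (again continuous dependence, after possibly shrinking $\varepsilon_1$ and $U_1$). The main obstacle is the first paragraph rather than the last two: everything after the choice of $(b,c)$ is a routine transversal-section construction via the implicit function theorem and continuous dependence on initial conditions and parameters, whereas pinning down a point where $g^+<0$ on $\gamma_0^+$ — and verifying that the fold condition at $(-a,0)$ forces the orbit into $y>0$ so that such a point must exist — is the one place where the specific hypothesis \textbf{(H0)} (and implicitly the fold structure to be used later) is doing real work; care is also needed that the chosen orbit segment stays inside the \emph{open} region $\Sigma^+$ so the upper subsystem's flow is defined and smooth along it.
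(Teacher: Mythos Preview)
Your proposal is correct and follows essentially the same route as the paper: both arguments reduce to an implicit function theorem applied to the flow of the upper subsystem against the transversal $\{y=c\}$, using $g^+(b,c;0)\neq 0$ as the nondegeneracy input. The only cosmetic difference is that the paper solves a $2\times 2$ system for $(t,z)$ simultaneously (with Jacobian $\big(\begin{smallmatrix} f^+(b,c;0)&-1\\ g^+(b,c;0)&0\end{smallmatrix}\big)$), whereas you solve a scalar equation for $t$ and then read off $\sigma^\pm$ as the first flow component; these are equivalent. One remark: you do not need the fold structure at $(-a,0)$ to know the orbit enters $\Sigma^+$---hypothesis {\bf(H0)} already asserts $\{\gamma_0^+(t):0<t<\tau_0\}\subset\Sigma^+$, so $y(t)>0$ on the open interval and your Rolle/MVT argument for the existence of a time with $g^+<0$ goes through from {\bf(H0)} alone.
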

\begin{proof}
The existence of $(b, c)$ and $\delta$ is obvious.  Let $(x^+(t,x,y;\alpha),y^+(t,x,y;\alpha))$ be the solution of the upper subsystem of $(\ref{Z2system})$ with the initial value $(x,y)$ at $t=0$. Consider equations
$$
\begin{aligned}
&F(t, x, y; \alpha, z):=x^+(t, x, y; \alpha)-z=0,\\
&G(t, x, y; \alpha, z):=y^+(t, x, y; \alpha)-c=0.
\end{aligned}
$$
Under the assumption {\bf(H0)}, we have $F(\tau^\pm_0, \mp a, 0; 0, b)=G(\tau^\pm_0, \mp a, 0; 0, b)=0$. Moreover, due to $g^+(b,c;0)<0$, the Jacobian matrix
$$\frac{\partial(F, G)}{\partial(t, z)}(\tau^\pm_0, \mp a, 0; 0, b)=
\left(
\begin{array}{cc}
f^+(b,c; 0)&-1\\
g^+(b,c;0)&0
\end{array}
\right)$$
is nonsingular. Thus, by the Implicit Function Theorem, there exists a constant $\varepsilon_1>0$, a neighborhood $U_1$ of $\alpha=0$ and $C^k$ functions $\tau^\pm(x,y;\alpha)$ and $\sigma^\pm(x,y;\alpha)$ defined for $\|(x\pm a,y)\|<\varepsilon_1$ and $\alpha\in U_1$ such that $\tau^\pm(\mp a,0;0)=\tau^\pm_0$, $\sigma^\pm(\mp a,0;0)=b$ and
\begin{equation}\label{awwrafmceifj}
\begin{aligned}
F(\tau^\pm(x,y;\alpha),x,y;\alpha,\sigma^\pm(x,y;\alpha))&=0,\\
G(\tau^\pm(x,y;\alpha),x,y;\alpha,\sigma^\pm(x,y;\alpha))&=0.
\end{aligned}
\end{equation}
In other words, the forward (resp. backward) orbits of the upper subsystem of $(\ref{Z2system})$ originating from $\{(x,y):\|(x+a,y)\|<\varepsilon_1\}$ (resp. $\{(x,y):\|(x-a,y)\|<\varepsilon_1\}$) can reach $\Pi$ at $(\sigma^+(x,y;\alpha),c)$ (resp. $(\sigma^-(x,y;\alpha),c)$) after a finite time $t=\tau^+(x,y;\alpha)$ (resp. $t=\tau^-(x,y;\alpha)$) for $\alpha\in U_1$. This completes the proof of Lemma~\ref{cnkrcdwerc}.
\end{proof}

Following the idea of \cite{EFEPFT}, we compute the partial derivatives of $\sigma^\pm(x,y;\alpha)$ listed in the next lemma.

\begin{lm}\label{Transipero}
Let
$$
\begin{aligned}
\lambda^+(t):=&~\exp\left(\int^{\tau^+_0}_t(f^+_x+g^+_y)(\gamma^+_0(s);0)ds\right),\qquad
&&\kappa^+_i:=\int^{\tau^+_0}_0\lambda^+(t)(f^+g^+_{\alpha_i}-g^+f^+_{\alpha_i})(\gamma^+_0(t);0)dt,\\
\lambda^-(t):=&~\exp\left(\int^{\tau^-_0}_t(f^+_x+g^+_y)(\hat\gamma^+_0(s);0)ds\right),\qquad
&&\kappa^-_i:=\int^{\tau^-_0}_0\lambda^-(t)(f^+g^+_{\alpha_i}-g^+f^+_{\alpha_i})(\hat\gamma^+_0(t);0)dt,\\
\end{aligned}
$$
where $i=1,2,\cdots,m$ and $\hat\gamma^+_0(t)$ denotes the solution of the upper subsystem of $(\ref{Z2system})$ with $\alpha=0$ satisfying $\hat\gamma^+_0(0)=(a,0)$. Then the following properties hold.
\begin{itemize}
\item[{\rm(1)}] $\sigma^+_{\alpha_i}(-a,0;0)=-\kappa_i^+/g^+(b,c;0)$ and $\sigma^-_{\alpha_i}(a,0;0)=-\kappa_i^-/g^+(b,c;0)$.
\item[{\rm(2)}]
If $g^+(-a,0;0)=0$, then
$$
\sigma^+_x(-a,0;0)=0,~~~ \sigma^+_{xx}(-a,0;0)=\frac{g^+_x(-a,0;0)\lambda^+(0)}{g^+(b,c;0)},~~~
\sigma^+_{x\alpha_i}(-a,0;0)=\frac{g^+_{\alpha_i}(-a,0;0)\lambda^+(0)}{g^+(b,c;0)};
$$
if $g^+(a,0;0)=0$, then
$$
\sigma^-_x(a,0;0)=0,~~~~~\sigma^-_{xx}(a,0;0)=\frac{g^+_x(a,0;0)\lambda^-(0)}{g^+(b,c;0)},~~~~~
\sigma^-_{x\alpha_i}(a,0;0)=\frac{g^+_{\alpha_i}(a,0;0)\lambda^-(0)}{g^+(b,c;0)}.
$$
\item[{\rm(3)}] $\lambda^-(0)=\lambda^+(0)/\lambda(0)$ and $\kappa_i^-=\kappa_i^+-\kappa_i\lambda^-(0)$, where $\lambda(t)$ and $\kappa_i$ are defined in $(\ref{notationsdefine})$.
 \end{itemize}
\end{lm}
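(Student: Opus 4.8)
Here is the plan. The idea is to differentiate the defining relations (\ref{awwrafmceifj}) of $\tau^\pm$ and $\sigma^\pm$ and to feed in the first‑ and second‑order variational equations of the upper subsystem along the reference orbits $\gamma^+_0$ and $\hat\gamma^+_0$. Write $\psi(t,x,y;\alpha)=(x^+,y^+)(t,x,y;\alpha)$, let $A(t):=DZ^+(\gamma^+_0(t);0)$ be the Jacobian of $(f^+,g^+)$ along $\gamma^+_0$, and let $\Phi(t)$ be the fundamental matrix solution of $\dot\Phi=A\Phi$, $\Phi(0)=I$. By Liouville's formula $\det\Phi(t)=\exp\bigl(\int_0^t(f^+_x+g^+_y)(\gamma^+_0(s);0)\,ds\bigr)$, so $\lambda^+(t)$ is exactly the integrating factor of the associated scalar equations. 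For planar vectors put $[p,q]:=p_1q_2-p_2q_1$; I will repeatedly use that $\gamma^{+\prime}_0=Z^+(\gamma^+_0;0)=:u$ solves $\dot u=Au$, and the $2\times 2$ identity $[Ap,q]+[p,Aq]=(\mathrm{tr}\,A)[p,q]$. Throughout set $\bar f:=f^+(b,c;0)$, $\bar g:=g^+(b,c;0)\ne0$.

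For (1), differentiating the two relations in (\ref{awwrafmceifj}) with respect to $\alpha_i$ at $(-a,0;0)$ and eliminating $\tau^+_{\alpha_i}$ by means of the second one gives $\sigma^+_{\alpha_i}(-a,0;0)=-[Z^+(b,c;0),v_i(\tau^+_0)]/\bar g$, where $v_i(t):=\partial_{\alpha_i}\psi(t,-a,0;0)$ solves $\dot v_i=Av_i+Z^+_{\alpha_i}(\gamma^+_0(t);0)$ with $v_i(0)=0$. Then $[u,v_i]$ satisfies the scalar equation $\tfrac{d}{dt}[u,v_i]=(\mathrm{tr}\,A)[u,v_i]+(f^+g^+_{\alpha_i}-g^+f^+_{\alpha_i})(\gamma^+_0(t);0)$ with $[u,v_i](0)=0$, so integrating with the factor $\lambda^+$ yields $[u,v_i](\tau^+_0)=\kappa^+_i$ and hence $\sigma^+_{\alpha_i}(-a,0;0)=-\kappa^+_i/\bar g$. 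The formula for $\sigma^-_{\alpha_i}(a,0;0)$ is obtained identically along $\hat\gamma^+_0$.

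Part (2) is the crux, and the tangency condition is what makes it work. Since $g^+(-a,0;0)=0$, the vector $Z^+(-a,0;0)=(f^+(-a,0;0),0)$ is horizontal, hence parallel to $\partial_x$ of the initial point $(x,0)$; consequently the first column $W(t):=\partial_x\psi(t,-a,0;0)$ of $\Phi(t)$ equals $u(t)/f^+(-a,0;0)$. This gives $[u,W]\equiv0$, whence $\sigma^+_x(-a,0;0)=0$ and $\tau^+_x(-a,0;0)=-1/f^+(-a,0;0)$. More importantly, it lets me rewrite the forcing terms of the second‑order variational equations for $X:=\partial_{xx}\psi$ and $Y:=\partial_{x\alpha_i}\psi$ (both with zero initial data): using $\dot u=Au$ one checks $D^2Z^+[u,u]=\tfrac{d}{dt}(Au)-A^2u$ and $D^2Z^+[u,v_i]+(\partial_{\alpha_i}DZ^+)u=\ddot v_i-A\dot v_i$, so that after bracketing with $u$ and invoking the $2\times2$ identity, $[u,X]$ and $[u,Y]$ obey scalar equations whose right‑hand sides, multiplied by $\lambda^+$, are exact derivatives $\tfrac{d}{dt}(\lambda^+\phi)$ with $\phi:=[u,Au]$, resp. $\tfrac{d}{dt}(\lambda^+\psi_i)$ with $\psi_i:=[u,\dot v_i]$. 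Integrating over $[0,\tau^+_0]$ leaves only boundary terms; the ones at $t=\tau^+_0$ cancel exactly the acceleration terms produced when eliminating $\tau^+_{xx}$ and $\tau^+_{x\alpha_i}$ from the twice‑differentiated form of (\ref{awwrafmceifj}), while the ones at $t=0$ are explicit because $Z^+(-a,0;0)$ is horizontal, namely $\phi(0)=f^+(-a,0;0)^2g^+_x(-a,0;0)$ and $\psi_i(0)=f^+(-a,0;0)\,g^+_{\alpha_i}(-a,0;0)$. Collecting terms gives $\sigma^+_{xx}(-a,0;0)=g^+_x(-a,0;0)\lambda^+(0)/\bar g$ and $\sigma^+_{x\alpha_i}(-a,0;0)=g^+_{\alpha_i}(-a,0;0)\lambda^+(0)/\bar g$; when $g^+(a,0;0)=0$ the same argument along $\hat\gamma^+_0$ gives the $\sigma^-$ statements. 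The main obstacle is precisely this second‑order implicit differentiation: it generates a proliferation of mixed second derivatives of the flow in $t$, $x$ and $\alpha_i$, and the computation only closes up because $W\parallel u$ at the tangent point converts the forcing of the second variational equations into total derivatives of $\lambda^+$ times a bracket, so that only the explicit endpoint data survive.

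Finally, (3) is pure bookkeeping. Since $(b,c)\in\gamma^+_0\cap\Sigma^+$, the orbit $\hat\gamma^+_0$ is a time shift of $\gamma^+_0$, namely $\hat\gamma^+_0(t)=\gamma^+_0(t+\tau_0)$, and $\tau^-_0=\tau^+_0-\tau_0$. Substituting $s=t+\tau_0$ in the integrals defining $\lambda^-$ and $\kappa^-_i$ shows $\lambda^-(s-\tau_0)=\lambda^+(s)$ and $\lambda^-(0)=\lambda^+(\tau_0)=\exp\bigl(\int_{\tau_0}^{\tau^+_0}(f^+_x+g^+_y)(\gamma^+_0)\,ds\bigr)=\lambda^+(0)/\lambda(0)$; moreover $\lambda^+(s)=\lambda^-(0)\lambda(s)$ on $[0,\tau_0]$. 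Hence $\kappa^-_i=\int_{\tau_0}^{\tau^+_0}\lambda^+(s)(f^+g^+_{\alpha_i}-g^+f^+_{\alpha_i})(\gamma^+_0(s);0)\,ds=\kappa^+_i-\int_0^{\tau_0}\lambda^+(s)(\cdots)\,ds=\kappa^+_i-\lambda^-(0)\kappa_i$, which finishes the proof.
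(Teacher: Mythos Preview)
Your argument is correct. Parts (1) and (3) are essentially the paper's own proof in different notation: your bracket $[u,v_i]$ is precisely what the paper obtains by pairing the adjoint solution $\bar\lambda\,(-g^+,f^+)$ with $w=\partial_{\alpha_i}\psi$, and (3) is the same time-shift substitution.

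The genuine difference is in (2). The paper does \emph{not} touch the second-order variational equations at all. Instead it combines the $x$-differentiated form of (\ref{awwrafmceifj}) with the relation $\Phi(\tau^\pm)\,Z^+(x,y;\alpha)=Z^+(x_1,y_1;\alpha)$, takes determinants, and applies Liouville's formula to obtain the closed-form identity
\[
\sigma^\pm_x(x,y;\alpha)=\frac{g^+(x,y;\alpha)}{g^+(x_1,y_1;\alpha)}\exp\!\left(\int_0^{\tau^\pm}(f^+_x+g^+_y)\,dt\right)
\]
valid for \emph{all} $(x,y;\alpha)$ near the reference point. The vanishing of $\sigma^\pm_x$ at a tangency, and the expressions for $\sigma^\pm_{xx}$, $\sigma^\pm_{x\alpha_i}$, then follow by plain differentiation of this formula. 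Your route---recognising that $W\parallel u$ at a tangent point turns the forcing of the second variation into an exact derivative of $\lambda^+[u,\cdot]$, so that only the boundary values $\phi(0)=f^+(-a,0;0)^2 g^+_x(-a,0;0)$ and $\psi_i(0)=f^+(-a,0;0)\,g^+_{\alpha_i}(-a,0;0)$ survive---is a clean, self-contained alternative that avoids the determinant trick. The trade-off is scope: the paper's formula holds off the tangency and is reused verbatim later (e.g.\ in Lemma~\ref{rut45scd} to get $\sigma^+_x(-\vartheta^-_1(\alpha),0;\alpha)=0$ for all small $\alpha$), whereas your computation only yields the derivatives at the specific points $(\mp a,0;0)$.
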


\begin{proof}
Taking the derivative with respect to $\alpha_i$ in (\ref{awwrafmceifj}), we arrive at
\begin{equation}\label{qpofifj}
\left(
\begin{array}{c}
f^+(x_1, y_1; \alpha)\\
g^+(x_1, y_1; \alpha)
\end{array}
\right)\tau^\pm_{\alpha_i}(x,y;\alpha)+
w(\tau^\pm(x,y;\alpha), x, y; \alpha)=
\left(
\begin{array}{c}
\sigma^\pm_{\alpha_i}(x,y,\alpha)\\
0
\end{array}
\right),
\end{equation}
where $x_1=x^+(\tau^\pm(x, y; \alpha), x, y;\alpha), y_1=y^+(\tau^\pm(x, y; \alpha), x, y;\alpha)$ and
\begin{equation}\label{wfmckacui}
w(t, x, y; \alpha)=(
x^+_{\alpha_i}(t, x, y; \alpha),y^+_{\alpha_i}(t, x, y; \alpha)
)^\top.
\end{equation}
Left-multiplying (\ref{qpofifj}) by the vector $(-g^+(x_1, y_1; \alpha), f^+(x_1, y_1; \alpha))$ we get
\begin{equation}\label{eijvjippp}
(-g^+(x_1, y_1; \alpha), f^+(x_1, y_1; \alpha))w(\tau^\pm(x,y;\alpha), x, y; \alpha)=-g^+(x_1, y_1; \alpha)\sigma^\pm_{\alpha_i}(x, y; \alpha).
\end{equation}

Taking the derivative with respect to $t$ in (\ref{wfmckacui}) yields
$$
\begin{aligned}
w_t(t,x,y;\alpha)&=\frac{\partial}{\partial\alpha_i}\left(
\begin{array}{c}
f^+(x^+(t, x, y; \alpha),y^+(t, x, y; \alpha);\alpha)\\
g^+(x^+(t, x, y; \alpha),y^+(t, x, y; \alpha);\alpha)
\end{array}
\right)\\
&=A(t,x,y;\alpha)w(t, x, y;\alpha)+b(t, x, y;\alpha),
\end{aligned}
$$
where $w(0, x, y; \alpha)=(0,0)^\top$,
\begin{equation}\label{38279jcdscsd}
\begin{aligned}
A(t,x,y;\alpha)&=\frac{\partial(f^+, g^+)}{\partial(x, y)}(x^+(t, x, y; \alpha),y^+(t, x, y; \alpha);\alpha),\\
b(t, x, y;\alpha)&=\left(
\begin{array}{c}
f^+_{\alpha_i}(x^+(t, x, y; \alpha),y^+(t, x, y; \alpha);\alpha)\\
g^+_{\alpha_i}(x^+(t, x, y; \alpha),y^+(t, x, y; \alpha);\alpha)
\end{array}
\right).
\end{aligned}
\end{equation}
According to \cite[Proposition 7]{EFEPFT},
\begin{equation}\label{9kcsdf}
\left(
\begin{array}{c}
u(t,x,y;\alpha)\\
v(t,x,y;\alpha)
\end{array}
\right)=\bar\lambda(t,x,y;\alpha)
\left(
\begin{array}{c}
-g^+(x^+(t, x, y; \alpha),y^+(t, x, y; \alpha);\alpha)\\
f^+(x^+(t, x, y; \alpha),y^+(t, x, y; \alpha);\alpha)
\end{array}
\right)
\end{equation}
is the solution of the adjoint variational equation
$(\dot u, \dot v)^\top=-A(t,x,y;\alpha)^\top(u, v)^\top$ around the solution $(x^+(t, x, y; \alpha),y^+(t, x, y; \alpha))$,
where
$$\bar\lambda(t,x,y;\alpha)=\exp\left(-\int^t_0(f^+_x+g^+_y)(x^+(s, x, y; \alpha),y^+(s, x, y; \alpha))ds\right).$$
Then it follows from the statement (c) of \cite[Proposition 6]{EFEPFT} that
\begin{equation}\label{7y378nv}
(u(t,x,y;\alpha), v(t,x,y;\alpha))w(t,x,y;\alpha)=\int^t_0(u(s,x,y;\alpha), v(s,x,y;\alpha))b(s,x,y;\alpha)ds.
\end{equation}

Taking $t=\tau^\pm(x,y;\alpha)$ we have
$$
\begin{aligned}
&\bar\lambda(\tau^\pm(x,y;\alpha),x,y;\alpha)(-g^+(x_1, y_1; \alpha), f^+(x_1, y_1; \alpha))w(\tau^\pm(x,y;\alpha),x,y;\alpha)\\
=&\int^{\tau^\pm(x,y;\alpha)}_0\bar\lambda(s,x,y;\alpha)(f^+g^+_{\alpha_i}-g^+f^+_{\alpha_i})(x^+(s, x, y; \alpha),y^+(s, x, y; \alpha);\alpha)ds
\end{aligned}
$$
from (\ref{38279jcdscsd}), (\ref{9kcsdf}) and (\ref{7y378nv}). Substituting (\ref{eijvjippp}) into the above equality we obtain
$$
\begin{aligned}
\sigma^\pm_{\alpha_i}(x, y; \alpha)&=-\frac{\int^{\tau^\pm(x,y;\alpha)}_0\bar\lambda(s,x,y;\alpha)(f^+g^+_{\alpha_i}-g^+f^+_{\alpha_i})(x^+(s, x, y; \alpha),y^+(s, x, y; \alpha);\alpha)ds}{g^+(x_1, y_1; \alpha)\bar\lambda(\tau^\pm(x,y;\alpha),x,y;\alpha)}.
\end{aligned}
$$
Finally, statement (1) holds by taking $(x,y;\alpha)=(-a,0;0)$ and $(x,y;\alpha)=(a,0;0)$ respectively.

Taking the derivative with respect to $x$ and $y$ in (\ref{awwrafmceifj}) yields
\begin{equation}\label{qeqwrqpoojwv}
\left(\!\!
\begin{array}{c}
f^+(x_1, y_1; \alpha)\\
g^+(x_1, y_1; \alpha)
\end{array}
\!\!\right)
(\tau^\pm_x(x, y; \alpha),
\tau^\pm_y(x, y; \alpha)
)+\Phi(\tau^\pm(x,y;\alpha), x, y; \alpha)=
\left(\!\!
\begin{array}{cc}
\sigma^\pm_x(x, y; \alpha)&\sigma^\pm_y(x, y; \alpha)\\
0&0\\
\end{array}
\!\!\right),
\end{equation}
where
$$\Phi(t, x, y; \alpha)=
\left(
\begin{array}{cc}
x^+_x(t, x, y;\alpha)&x^+_y(t, x, y;\alpha)\\
y^+_x(t, x, y;\alpha)&y^+_y(t, x, y;\alpha)
\end{array}
\right)$$
is the principal fundamental matrix of the variational equation of the upper subsystem of $(\ref{Z2system})$ along the solution $(x^+(t, x, y;\alpha), y^+(t, x, y;\alpha))$.
Right-multiplying (\ref{qeqwrqpoojwv}) by $(1, 0)^\top$ we have
$$
\left(
\begin{array}{c}
f^+(x_1, y_1; \alpha)\\
g^+(x_1, y_1; \alpha)
\end{array}
\right)\tau^\pm_x(x, y; \alpha)+\Phi(\tau^\pm(x,y;\alpha), x, y; \alpha)
\left(
\begin{array}{c}
1\\
0
\end{array}
\right)
=
\left(
\begin{array}{c}
\sigma^\pm_x(x, y; \alpha)\\
0
\end{array}
\right),
$$
and then
\begin{equation}\label{ewbfmjcnjkf}
\Phi(\tau^\pm(x,y;\alpha), x, y; \alpha)
\left(
\begin{array}{c}
-1\\
0
\end{array}
\right)=\left(
\begin{array}{c}
f^+(x_1, y_1; \alpha)\\
g^+(x_1, y_1; \alpha)
\end{array}
\right)\tau^\pm_x(x, y; \alpha)-\left(
\begin{array}{c}
\sigma^\pm_x(x, y; \alpha)\\
0
\end{array}
\right).
\end{equation}

Besides, notice that
$$
\Phi(\tau^\pm(x,y;\alpha), x, y; \alpha)\left(
\begin{array}{c}
f^+(x, y; \alpha)\\
g^+(x, y; \alpha)
\end{array}
\right)=\left(
\begin{array}{c}
f^+(x_1, y_1; \alpha)\\
g^+(x_1, y_1; \alpha)
\end{array}
\right).
$$
This, together with (\ref{ewbfmjcnjkf}), implies that
$$
\Phi(\tau^\pm(x,y;\alpha), x, y; \alpha)
\left(
\begin{array}{cc}
f^+(x, y; \alpha)&-1\\
g^+(x, y; \alpha)&0
\end{array}
\right)=\left(
\begin{array}{cc}
f^+(x_1, y_1; \alpha)&-1\\
g^+(x_1, y_1; \alpha)&0
\end{array}
\right)
\left(
\begin{array}{cc}
1&\tau^\pm_x(x, y; \alpha)\\
0&\sigma^\pm_x(x, y; \alpha)
\end{array}
\right).$$
Then, taking determinants and using the Liouville's formula in the computation of the determinant of $\Phi(\tau^\pm(x,y;\alpha), x, y; \alpha)$, we get
\begin{equation}\label{urvnjfhp}
\begin{aligned}
&\sigma^\pm_x(x, y; \alpha)
=\frac{g^+(x, y; \alpha)}{g^+(x_1, y_1; \alpha)}\exp\left(\int^{\tau^\pm(x, y; \alpha)}_0(f^+_x+g^+_y)(x^+(t, x,y;\alpha),y^+(t, x,y;\alpha);\alpha)dt\right).
\end{aligned}
\end{equation}

Furthermore, taking the derivative with respect to $x$ and $\alpha_i$ in (\ref{urvnjfhp}) severally yields
\begin{equation}\label{ajcr3}
\begin{aligned}
\sigma^\pm_{xx}(x, y; \alpha)=&~\frac{g^+_x(x,y;\alpha)g^+(x_1,y_1;\alpha)-g^+(x,y;\alpha)g^+_x(x_1,y_1;\alpha)\sigma^\pm_x(x,y;\alpha)}{(g^+(x_1,y_1;\alpha))^2}\\
&\times \exp\left(\int^{\tau^\pm(x, y; \alpha)}_0(f^+_x+g^+_y)(x^+(t, x,y;\alpha),y^+(t, x,y;\alpha);\alpha)dt\right)\\
&+\sigma^\pm_x(x, y; \alpha)\Bigg\{-\frac{f^+_x(x_1,y_1;\alpha)+g^+_y(x_1,y_1;\alpha)}{g^+(x_1,y_1;\alpha)}y^+_x(\tau^\pm(x,y;\alpha),x,y;\alpha)\\
&+\int^{\tau^\pm(x, y; \alpha)}_0\nabla(f^+_x+g^+_y)(x^+(t, x,y;\alpha),y^+(t, x,y;\alpha);\alpha)\left(
\begin{array}{c}
x^+_x(t, x,y;\alpha)\\
y^+_x(t, x,y;\alpha)
\end{array}
\right)dt\Bigg\}
\end{aligned}
\end{equation}
and
\begin{equation}\label{cnkjnj23}
\begin{aligned}
\sigma^\pm_{x\alpha_i}(x, y; \alpha)=&~\frac{g^+_{\alpha_i}(x,y;\alpha)g^+(x_1,y_1;\alpha)-g^+(x,y;\alpha)\left(g^+_x(x_1,y_1;\alpha)
\sigma^\pm_{\alpha_i}(x,y;\alpha)+g^+_{\alpha_i}(x_1,y_1;\alpha)\right)}{(g^+(x_1,y_1;\alpha))^2}\\
&\times\exp\left(\int^{\tau^\pm(x, y; \alpha)}_0(f^+_x+g^+_y)(x^+(t, x,y;\alpha),y^+(t, x,y;\alpha);\alpha)dt\right)\\
&+\sigma^\pm_x(x, y; \alpha)\Bigg\{-\frac{f^+_x(x_1,y_1;\alpha)+g^+_y(x_1,y_1;\alpha)}{g^+(x_1,y_1;\alpha)}y^+_{\alpha_i}(\tau^\pm(x,y;\alpha),x,y;\alpha)\\
&+\int^{\tau^\pm(x, y; \alpha)}_0\nabla(f^+_x+g^+_y)(x^+(t, x,y;\alpha),y^+(t, x,y;\alpha);\alpha)\left(
\begin{array}{c}
x^+_{\alpha_i}(t, x,y;\alpha)\\
y^+_{\alpha_i}(t, x,y;\alpha)
\end{array}
\right)dt\\
&+\int^{\tau^\pm(x, y; \alpha)}_0(f^+_{x\alpha_i}+g^+_{y\alpha_i})(x^+(t, x,y;\alpha),y^+(t, x,y;\alpha);\alpha)
dt\Bigg\}.
\end{aligned}
\end{equation}
Finally, due to $g^+(-a,0;0)=0$ and $g^+(a,0;0)$, we can take $(x,y;\alpha)=(-a,0;0)$ and $(x,y;\alpha)=(a,0;0)$ respectively in (\ref{urvnjfhp}), (\ref{ajcr3}) and (\ref{cnkjnj23}) to obtain statement (2).

Note that $\tau_0^-+\tau_0=\tau_0^+$ and $\gamma_0^+(t+\tau_0)=\hat\gamma_0^+(t)$ for all $t\in\mathbb{R}$. Then
$$
\begin{aligned}
\lambda^-(0)&=\exp\left(\int^{\tau^-_0}_0(f^+_x+g^+_y)(\gamma^+_0(s+\tau_0);0)ds\right)\\
&=\exp\left(\int^{\tau^+_0}_0(f^+_x+g^+_y)(\gamma^+_0(t);0)dt-\int^{\tau_0}_0(f^+_x+g^+_y)(\gamma^+_0(t);0)dt\right)\\
&=\lambda^+(0)/\lambda(0),\\
\end{aligned}$$
and
$$
\begin{aligned}
\kappa^-_i&=\int^{\tau^-_0}_0\exp\left(\int^{\tau^-_0}_t(f^+_x+g^+_y)(\gamma^+_0(s+\tau_0);0)ds\right)(f^+g^+_{\alpha_i}-g^+f^+_{\alpha_i})(\gamma^+_0(t+\tau_0);0)dt\\
&=\int^{\tau^+_0}_{\tau_0}\exp\left(\int^{\tau^-_0}_{u-\tau_0}(f^+_x+g^+_y)(\gamma^+_0(s+\tau_0);0)ds\right)(f^+g^+_{\alpha_i}-g^+f^+_{\alpha_i})(\gamma^+_0(u);0)du\\
&=\int^{\tau^+_0}_{\tau_0}\exp\left(\int^{\tau^+_0}_{u}(f^+_x+g^+_y)(\gamma^+_0(v);0)dv\right)(f^+g^+_{\alpha_i}-g^+f^+_{\alpha_i})(\gamma^+_0(u);0)du\\
&=\kappa_i^+-\kappa_i\lambda^-(0),
\end{aligned}$$
i.e. statement (3) holds.
\end{proof}

Similar to Lemmas~\ref{cnkrcdwerc} and \ref{Transipero}, we can prove the following result and omit its proof here.
\begin{lm}\label{dafsferwerc}
Assume that the upper subsystem of $(\ref{Z2system})$ with $\alpha=0$ satisfies {\bf(H0)}. If $g^+(a,0;0)<0$, there exists a constant $\varepsilon_2>0$, a neighborhood $U_2$ of $\alpha=0$ and $C^k$ functions $\tau(x,y;\alpha)$ and $\sigma(x,y;\alpha)$ defined for $\|(x+a,y)\|<\varepsilon_2$ and $\alpha\in U_2$ such that $\tau(-a,0;0)=\tau_0$, $\sigma(-a,0;0)=a$ and the forward orbits of the upper subsystem of $(\ref{Z2system})$ originating from $\{(x,y):\|(x+a,y)\|<\varepsilon_2\}$ can reach $\Sigma$ at $(\sigma(x,y;\alpha),0)$ after a finite time $t=\tau(x,y;\alpha)$ for $\alpha\in U_2$. Moreover,
$$\sigma_{\alpha_i}(-a,0;0)=-\frac{\kappa_i}{g^+(a,0;0)},$$
and, if $g^+(-a,0;0)=0$, then
$$
\sigma_x(-a,0;0)=0,~~~~ \sigma_{xx}(-a,0;0)=\frac{g^+_x(-a,0;0)\lambda(0)}{g^+(a,0;0)},~~~~
\sigma_{x\alpha_i}(-a,0;0)=\frac{g^+_{\alpha_i}(-a,0;0)\lambda(0)}{g^+(a,0;0)}.
$$
\end{lm}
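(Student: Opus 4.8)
The plan is to mimic the proofs of Lemmas~\ref{cnkrcdwerc} and \ref{Transipero} almost verbatim, the only structural change being that the orbit is now tracked along the \emph{entire} arc $\gamma_0^+$ from $(-a,0)$ to the terminal point $(a,0)\in\Sigma$, instead of being stopped at the interior point $(b,c)\in\Pi$. Accordingly, the transversality condition $g^+(b,c;0)<0$ exploited there is replaced by the hypothesis $g^+(a,0;0)<0$, the terminal time becomes $\tau_0$ instead of $\tau_0^+$, and the truncated weights $\lambda^+(t),\kappa_i^+$ become the full ones $\lambda(t),\kappa_i$ of (\ref{notationsdefine}).

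First I would redo the Implicit Function Theorem argument of Lemma~\ref{cnkrcdwerc}. Writing $(x^+(t,x,y;\alpha),y^+(t,x,y;\alpha))$ for the solution of the upper subsystem of $(\ref{Z2system})$ issuing from $(x,y)$ at $t=0$, set $F(t,x,y;\alpha,z):=x^+(t,x,y;\alpha)-z$ and $G(t,x,y;\alpha):=y^+(t,x,y;\alpha)$. Then {\bf(H0)} gives $F(\tau_0,-a,0;0,a)=G(\tau_0,-a,0;0)=0$, and since $g^+(a,0;0)<0$ the Jacobian
$$\frac{\partial(F,G)}{\partial(t,z)}(\tau_0,-a,0;0,a)=\left(\begin{array}{cc} f^+(a,0;0) & -1 \\ g^+(a,0;0) & 0 \end{array}\right)$$
is nonsingular. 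The Implicit Function Theorem then produces $\varepsilon_2>0$, a neighborhood $U_2$ of $\alpha=0$ and $C^k$ functions $\tau,\sigma$ with $\tau(-a,0;0)=\tau_0$, $\sigma(-a,0;0)=a$ satisfying $F(\tau(x,y;\alpha),x,y;\alpha,\sigma(x,y;\alpha))=G(\tau(x,y;\alpha),x,y;\alpha)=0$; this is exactly the claimed property that forward orbits from $\{\|(x+a,y)\|<\varepsilon_2\}$ meet $\Sigma$ at $(\sigma(x,y;\alpha),0)$ after time $\tau(x,y;\alpha)$, and shrinking the neighborhood if necessary keeps $g^+(\sigma(x,y;\alpha),0;\alpha)$ bounded away from $0$ by continuity.

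For the derivative formulas I would follow the proof of Lemma~\ref{Transipero} step for step. Differentiating $F=G=0$ in $\alpha_i$ and left-multiplying by $(-g^+,f^+)$ evaluated at the terminal point eliminates $\tau_{\alpha_i}$; inserting the solution of the adjoint variational equation and the integral identity from \cite[Propositions~6 and 7]{EFEPFT}, and then setting $(x,y;\alpha)=(-a,0;0)$ so that the relevant orbit is $\gamma_0^+$ with terminal time $\tau_0$, yields $\sigma_{\alpha_i}(-a,0;0)=-\kappa_i/g^+(a,0;0)$. Differentiating $F=G=0$ in $x$ and evaluating the variational fundamental-matrix determinant via Liouville's formula gives
$$\sigma_x(x,y;\alpha)=\frac{g^+(x,y;\alpha)}{g^+(\sigma(x,y;\alpha),0;\alpha)}\exp\left(\int_0^{\tau(x,y;\alpha)}(f^+_x+g^+_y)(x^+(t,x,y;\alpha),y^+(t,x,y;\alpha);\alpha)\,dt\right),$$
so $\sigma_x(-a,0;0)=0$ once $g^+(-a,0;0)=0$; differentiating this expression once more in $x$ and in $\alpha_i$ and evaluating at $(-a,0;0)$, where the prefactor $g^+(-a,0;0)=0$ annihilates all but one term, gives $\sigma_{xx}(-a,0;0)=g^+_x(-a,0;0)\lambda(0)/g^+(a,0;0)$ and $\sigma_{x\alpha_i}(-a,0;0)=g^+_{\alpha_i}(-a,0;0)\lambda(0)/g^+(a,0;0)$.

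The only genuinely new point — and hence the main thing to watch — is the bookkeeping that the Implicit Function Theorem is now applied at the \emph{terminal} point $(a,0)$ of $\gamma_0^+$ rather than at an interior point, which requires transversality of $\gamma_0^+$ to $\Sigma$ there, i.e. $g^+(a,0;0)\neq0$, guaranteed by the hypothesis $g^+(a,0;0)<0$; and the observation that, since the integration now runs over all of $\gamma_0^+$, the constants that appear are the unperturbed $\lambda(0),\kappa_i$ of (\ref{notationsdefine}) rather than their $(b,c)$-truncated counterparts $\lambda^+(0),\kappa_i^+$. Once this is noted, every remaining step — the Implicit Function Theorem, the two rounds of differentiation, the use of \cite[Propositions~6 and 7]{EFEPFT}, the Liouville identity — is identical to the corresponding step in Lemmas~\ref{cnkrcdwerc} and \ref{Transipero} under the substitutions $(b,c)\mapsto(a,0)$, $\tau_0^+\mapsto\tau_0$, $\lambda^+\mapsto\lambda$, $\kappa_i^+\mapsto\kappa_i$, so no new analytic difficulty arises.
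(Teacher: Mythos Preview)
Your proposal is correct and is exactly the approach the paper intends: it states that the lemma is proved ``similar to Lemmas~\ref{cnkrcdwerc} and \ref{Transipero}'' and omits the details, and your substitutions $(b,c)\mapsto(a,0)$, $\tau_0^+\mapsto\tau_0$, $\lambda^+\mapsto\lambda$, $\kappa_i^+\mapsto\kappa_i$ together with the transversality $g^+(a,0;0)<0$ are precisely what is needed to carry those arguments over.
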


The following lemma provides a result on the decomposition of functions.

\begin{lm}\label{eiowu45cd}
Let $F(x):\mathbb{R}^m\rightarrow\mathbb{R}$ and $c_i(x):\mathbb{R}^m\rightarrow\mathbb{R}$ be $C^k$ $(k\ge1)$ functions that satisfy $F(0)=0$ and $c_i(0)=0$ respectively, where
$i=1,2\cdots,l\le m$. Assume that $F(x)\equiv0$ in $\{x\in N:c_1(x)=c_2(x)=\cdots=c_l(x)=0\}$ for a neighborhood $N$ of $x=0$.
Then there exists a neighborhood $N^*\subset N$ of $x=0$ and $C^{k-1}$ functions $F_i(x)$ such that
$$F(x)=\sum_{i=1}^lc_i(x)F_i(x)$$
for $x\in N^*$ if the Jacobian matrix of $(c_1(x),c_2(x),\cdots,c_l(x))$ at $x=0$ has the full rank. In addition, if $\nabla F(0)=0$, i.e. the gradient of $F(x)$ at $x=0$ vanishes, then $F_1(0)=F_2(0)=\cdots=F_l(0)=0$.
\end{lm}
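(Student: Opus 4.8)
The plan is to reduce the statement to a standard Hadamard-type factorization by straightening out the submanifold $\{c_1=\cdots=c_l=0\}$ with a change of coordinates, then peeling off the $c_i$ one at a time via the fundamental-theorem-of-calculus trick. First I would use the full-rank hypothesis: since the Jacobian of $(c_1,\dots,c_l)$ at $x=0$ has rank $l$, the map $c=(c_1,\dots,c_l):\mathbb{R}^m\to\mathbb{R}^l$ is a submersion near $0$. By the rank theorem (or by completing $c_1,\dots,c_l$ to a local coordinate system with $m-l$ of the original coordinates $x_{j}$), there is a $C^k$ diffeomorphism $\Psi$ on a neighborhood $N'$ of $0$, with $\Psi(0)=0$, whose first $l$ components are exactly $c_1,\dots,c_l$; write the new coordinates as $(u,v)=(u_1,\dots,u_l,v_1,\dots,v_{m-l})$ with $u_i=c_i(x)$. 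Then $\widetilde F(u,v):=F(\Psi^{-1}(u,v))$ is $C^k$, and the hypothesis $F\equiv 0$ on $\{c_1=\cdots=c_l=0\}$ becomes $\widetilde F(0,v)\equiv 0$ for all $v$ near $0$.

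Next I would factor $\widetilde F$. Writing
\begin{equation}\label{hadamardstep}
\widetilde F(u,v)=\widetilde F(u,v)-\widetilde F(0,v)=\int_0^1\frac{d}{ds}\widetilde F(su,v)\,ds=\sum_{i=1}^l u_i\int_0^1\widetilde F_{u_i}(su,v)\,ds,
\end{equation}
we set $\widetilde F_i(u,v):=\int_0^1\widetilde F_{u_i}(su,v)\,ds$, which is $C^{k-1}$ since $\widetilde F$ is $C^k$ and differentiation under the integral sign is legitimate on the compact interval $[0,1]$. Pulling back via $\Psi$, define $F_i(x):=\widetilde F_i(\Psi(x))$; these are $C^{k-1}$ on $N^*:=\Psi^{-1}(\text{(small ball)})\subset N$, and since $u_i=c_i(x)$ we obtain $F(x)=\sum_{i=1}^l c_i(x)F_i(x)$ on $N^*$, as claimed.

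For the final assertion, suppose $\nabla F(0)=0$. Evaluating the gradient of $F(x)=\sum_i c_i(x)F_i(x)$ at $x=0$ and using $c_i(0)=0$ gives $\nabla F(0)=\sum_i F_i(0)\,\nabla c_i(0)$. Since $\nabla c_1(0),\dots,\nabla c_l(0)$ are linearly independent (that is exactly the full-rank hypothesis), $\nabla F(0)=0$ forces $F_i(0)=0$ for every $i$. Alternatively one can argue directly in the straightened coordinates: $\widetilde F(0,v)\equiv 0$ gives $\widetilde F_{v_j}(0,v)\equiv 0$, and $\nabla F(0)=0$ together with the chain rule then yields $\widetilde F_{u_i}(0,0)=0$, hence $\widetilde F_i(0,0)=\int_0^1\widetilde F_{u_i}(0,0)\,ds=0$.

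The main obstacle is purely bookkeeping: producing the coordinate change $\Psi$ whose first $l$ components are literally $c_1,\dots,c_l$ and controlling the neighborhoods so that $N^*\subset N$ and everything stays in the domain where the $c_i$ and $F$ are defined; the factorization \eqref{hadamardstep} itself is routine once the coordinates are straightened. One must also be slightly careful that only $C^{k-1}$ regularity survives, because the integrand in $\widetilde F_i$ involves first derivatives of a $C^k$ function — differentiating again under the integral sign costs one order, which is why the conclusion is stated with $C^{k-1}$.
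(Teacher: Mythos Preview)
Your proposal is correct and follows essentially the same approach as the paper: straighten the $c_i$ into the first $l$ coordinates via the rank theorem, apply the Hadamard integral factorization $\widetilde F(u,v)=\sum_i u_i\int_0^1\widetilde F_{u_i}(su,v)\,ds$, and pull back. The paper's argument for the final assertion goes through the chain rule on $\widetilde F$ (your alternative argument), but your product-rule version using the linear independence of the $\nabla c_i(0)$ is equally valid and arguably cleaner.
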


\begin{proof}
If the Jacobian matrix of $(c_1(x),c_2(x),\cdots,c_l(x))$ at $x=0$ has the full rank, there are $j_i\in\{1,2,\cdots,m\}$, $i=1,2,\cdots,l$, such that
$$
\det\left.\frac{\partial (c_1(x),c_2(x),\cdots,c_l(x))}{\partial(x_{j_1},x_{j_2},\cdots,x_{j_l})}\right|_{x=0}\ne0.
$$
Let $c_i(x):=x_{j_i}$
such that $x_{j_i}\ne x_{j_1}, x_{j_2},\cdots,x_{j_l}$ for all $i=l+1,\cdots,m$ and $x_{j_p}\ne x_{j_q}$ for $p\ne q$.
Then there is a neighborhood $N_x\subset N$ such that $y=c(x):=(c_1(x),c_2(x),\cdots,c_m(x))$ is a $C^k$ diffeomorphism from $N_x$ to $N_y:=\{c(x): x\in N_x\}$. In this case, letting $G(y):=F(c^{-1}(y))$ for $y\in N_y$, we get $G(0,0,\cdots,0,\bar y)=0$ for $(0,0,\cdots,0,\bar y)\in N_y$ from the assumption of lemma, where $\bar y=(c_{l+1}(x),\cdots,c_m(x))$.
Hence, by the Taylor formula with the integral form of the remainder \cite[Theorem 3.18]{JJCA},
$$G(y)=G(0,0,\cdots,0,\bar y)+\sum_{i=1}^ly_i\int^1_0\frac{\partial G(ty_1,ty_2,\cdots,ty_l,\bar y)}{\partial y_i}dt=
\sum_{i=1}^ly_iG_i(y)$$
for $y\in N_y$, where
$$
G_i(y)=\int^1_0\frac{\partial G(ty_1,ty_2,\cdots,ty_l,\bar y)}{\partial y_i}dt.
$$
Since $F(x)$ and the diffeomorphism $y=c(x)$ are $C^k$, $G(y)$ is also $C^k$, which implies that $G_i(y)$, $i=1,2,\cdots,l$, are $C^{k-1}$.
Finally, by taking $F_i(x):=G_i(c(x))$ and $N^*:=N_x$, we get
$$F(x)=\sum_{i=1}^lc_i(x)F_i(x)$$
for $x\in N^*$. In addition,
if $\nabla F(0)=0$, then
$$\frac{\partial G(0)}{\partial y_{i}}=\sum_{j=1}^m\frac{\partial F(0)}{\partial x_{j}}\frac{\partial x_j(0)}{\partial y_i}=0$$ for $i=1,\cdots,l$, which implies $F_i(0)=G_i(0)=0$ by definitions of $F_i(x), G_i(y)$. This lemma is proved.
\end{proof}

\section{Proof of Theorem~\ref{codim-1-bifur}}
\setcounter{equation}{0}
\setcounter{lm}{0}
\setcounter{thm}{0}
\setcounter{rmk}{0}
\setcounter{df}{0}
\setcounter{cor}{0}

To prove Theorem~\ref{codim-1-bifur}, we start by studying the dynamics on $\Sigma$.
\begin{lm}\label{codimenoneslidingdy}
Under the assumptions {\bf(H1)} and {\bf(H2)}, for any sufficiently small $\varepsilon_3>0$ there exists a neighborhood $U_3$ of $\alpha=0$ and a smooth function
\begin{equation}\label{2324sff}
\varrho_1(\alpha)=a+\sum_{i=1}^m\frac{g^+_{\alpha_i}(-a,0;0)}{g^+_x(-a,0;0)}\alpha_i+\mathcal{O}(\|\alpha\|^2)
\end{equation}
defined in $U_3$ such that the following properties hold for system $(\ref{Z2system})$ with $\alpha\in U_3$.
\begin{itemize}
\item[{\rm(1)}] $I^-_1:=\{(x,0):|x+a|<\varepsilon_3\}$ is split into an upward crossing segment
and a stable sliding segment
$$S^-_1:=
\left\{\begin{aligned}
&\{(x,0):-a-\varepsilon_3<x<-\varrho_1(\alpha)\}\qquad when~f^+(-a,0;0)>0,\\
&\{(x,0):-\varrho_1(\alpha)<x<-a+\varepsilon_3\}\qquad when~f^+(-a,0;0)<0
\end{aligned}
\right.
$$
by $(-\varrho_1(\alpha),0)$, which is a visible fold of the upper subsystem and a regular point of the lower subsystem. Moreover, the sliding orbit on $S^-_{1}$ is rightward $($resp. leftward$)$ when $f^+(-a,0;0)>0$ $($resp. $<0$$)$.
\item[{\rm(2)}] The dynamics exhibited on $I^+_1:=\{(x,0):|x-a|<\varepsilon_3\}$ and $I^-_1$ are $\mathbb{Z}_2$-symmetric with respect to $O$, namely $I^+_1$ is split into a downward crossing segment and a stable sliding segment
$$S^+_1:=
\left\{\begin{aligned}
&\{(x,0):\varrho_1(\alpha)<x<a+\varepsilon_3\}\qquad when~f^+(-a,0;0)>0,\\
&\{(x,0):a-\varepsilon_3<x<\varrho_1(\alpha)\}\qquad when~f^+(-a,0;0)<0
\end{aligned}
\right.
$$
by $(\varrho_1(\alpha),0)$, which is a visible fold of the lower subsystem and a regular point of the upper subsystem. Moreover, the sliding orbit on $S^+_{1}$ is leftward $($resp. rightward$)$ when $f^+(-a,0;0)>0$ $($resp. $<0$$)$.
\end{itemize}
\end{lm}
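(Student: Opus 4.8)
The plan is to work near the tangent point $(-a,0)$ and directly analyze the sign of the normal components $g^+(x,0;\alpha)$ and $g^-(x,0;\alpha)$ along $I^-_1$, then deduce everything on $I^+_1$ by $\mathbb Z_2$-symmetry. First I would locate the fold point of the upper subsystem: since $g^+(-a,0;0)=0$ by \textbf{(H1)} and $g^+_x(-a,0;0)\ne0$ (because $f^+(-a,0;0)g^+_x(-a,0;0)>0$ forces $g^+_x(-a,0;0)\ne0$), the Implicit Function Theorem applied to $g^+(x,0;\alpha)=0$ yields a unique smooth solution $x=-\varrho_1(\alpha)$ near $x=-a$ for $\alpha$ in a neighborhood $U_3$ of $0$, with $\varrho_1(0)=a$. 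Differentiating the identity $g^+(-\varrho_1(\alpha),0;\alpha)\equiv0$ at $\alpha=0$ gives $-\varrho_{1,\alpha_i}(0)g^+_x(-a,0;0)+g^+_{\alpha_i}(-a,0;0)=0$, i.e. $\varrho_{1,\alpha_i}(0)=g^+_{\alpha_i}(-a,0;0)/g^+_x(-a,0;0)$, which is exactly the first-order term in (5.1); the $\mathcal O(\|\alpha\|^2)$ remainder is just Taylor's theorem. This point $(-\varrho_1(\alpha),0)$ is a fold of the upper subsystem: $g^+=0$ there and $f^+\ne0$ there by continuity from $f^+(-a,0;0)\ne0$, while $(Z^+)^2h = f^+ g^+_x + g^+ g^+_y = f^+ g^+_x$ at that point, which is $>0$ by \textbf{(H1)} and continuity, so the fold is visible.

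Next I would determine the splitting of $I^-_1$ into crossing and sliding parts. On $I^-_1$ we have $Z^-h(x,0;\alpha) = g^-(x,0;\alpha)$; using the $\mathbb Z_2$-symmetry (the lower subsystem is $-(f^+(-x,-y;\alpha),g^+(-x,-y;\alpha))$) one computes $g^-(x,0;\alpha) = -g^+(-x,0;\alpha)$, so at $(-a,0)$ this equals $-g^+(a,0;0)>0$ by \textbf{(H2)}; shrinking $\varepsilon_3$ and $U_3$, $g^-$ stays positive on all of $I^-_1$. Meanwhile $g^+(x,0;\alpha)$ changes sign across $x=-\varrho_1(\alpha)$ with $g^+_x(-\varrho_1(\alpha),0;\alpha)\ne0$, so $g^+(x,0;\alpha)$ has the sign of $g^+_x(-a,0;0)\cdot(x+\varrho_1(\alpha))$. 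Hence the product $Z^+h\cdot Z^-h$ is positive (crossing, both normals positive, so upward crossing) on one side of $-\varrho_1(\alpha)$ and negative (sliding) on the other; the sliding segment is stable because there $g^+<0<g^-$, i.e. $Z^+h<0<Z^-h$. Which side is which is governed by the sign of $g^+_x(-a,0;0)$, which by \textbf{(H1)} has the same sign as $f^+(-a,0;0)$: when $f^+(-a,0;0)>0$ then $g^+_x(-a,0;0)>0$ so $g^+<0$ for $x<-\varrho_1(\alpha)$, giving the stated $S^-_1$; when $f^+(-a,0;0)<0$ the inequality reverses. For the sliding direction on $S^-_1$, I would use (2.3): on the stable sliding segment the sliding vector field is a convex combination of $Z^\pm$ and its $x$-component near the fold is dominated by $(1-\mu)f^+(x,0;\alpha)$ with $\mu\to0$ as $x\to-\varrho_1(\alpha)$, hence it has the sign of $f^+(-a,0;0)$ — rightward when $f^+(-a,0;0)>0$, leftward when $f^+(-a,0;0)<0$ — and on the whole segment the sliding vector never vanishes because $f^+$ keeps its sign while $g^-$ stays away from zero, so there is no pseudo-equilibrium.

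Finally, statement (2) follows from statement (1) by the $\mathbb Z_2$-symmetry of system (1.3): the map $(x,y)\mapsto(-x,-y)$ sends orbits of the upper subsystem to orbits of the lower subsystem and vice versa, sends $\Sigma$ to itself reversing orientation, and sends $I^-_1$ to $I^+_1$; it therefore carries the fold $(-\varrho_1(\alpha),0)$ of the upper subsystem to the fold $(\varrho_1(\alpha),0)$ of the lower subsystem, the upward crossing segment to a downward crossing segment, the stable sliding segment $S^-_1$ to the stable sliding segment $S^+_1$, and reverses the sliding direction, giving precisely the stated description. I expect the main obstacle to be bookkeeping the several sign cases cleanly — tracking how the sign of $f^+(-a,0;0)$ propagates through $g^+_x(-a,0;0)$, through the sign of $g^+$ on either side of $-\varrho_1(\alpha)$, and through the sliding direction, while simultaneously keeping the $\mathbb Z_2$-symmetry reduction honest (in particular, correctly handling the orientation-reversal on $\Sigma$ so that ``upward'' on $I^-_1$ becomes ``downward'' on $I^+_1$ and ``rightward'' becomes ``leftward'').
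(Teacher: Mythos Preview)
Your proposal is correct and follows essentially the same approach as the paper: apply the Implicit Function Theorem to $g^+(x,0;\alpha)=0$ to locate the fold $(-\varrho_1(\alpha),0)$ and obtain the expansion \eqref{2324sff}, use continuity of $f^+$, $g^+_x$ and $g^+(-x,0;\alpha)$ to verify visibility and the crossing/sliding splitting with the stated sliding direction, and then invoke the $\mathbb Z_2$-symmetry for statement~(2). Your write-up is in fact slightly more explicit than the paper's (which simply refers to ``a standard analysis with the related notions'' for the splitting and sliding direction), but the underlying argument is identical.
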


\begin{proof}
For any sufficiently small $\varepsilon_3>0$,
\begin{equation}\label{4827dscsf}
f^+(x,0;0)g^+_x(x,0;0)>0,\qquad g^+(-x,0;0)<0,
\end{equation}
and $x=-a$ is the unique zero of $g^+(x,0;0)$ for $(x,0)\in I^-_1$ by {\bf(H1)}, {\bf(H2)} and the sign-preserving property of continuous functions.
Considering the equation $g^+(x,0;\alpha)=0$, we have
$g^+(-a,0;0)=0$ and $g^+_x(-a,0;0)\ne0$ from {\bf(H1)}. Then, by the Implicit Function Theorem, there is a neighborhood $U_3$ of $\alpha=0$, a unique and smooth function $\varrho_1(\alpha):U_3\rightarrow\mathbb{R}$ such that $\varrho_1(0)=a$, $g^+(-\varrho_1(\alpha),0;\alpha)=0$ and $(-\varrho_1(\alpha),0)\in I^-_1$ for $\alpha\in U_3$. Moreover, $\varrho_1(\alpha)$ has the expansion given in (\ref{2324sff}).

On the other hand, due to (\ref{4827dscsf}), $U_3$ can be chosen such that
$f^+(x,0;\alpha)g^+_x(x,0;\alpha)>0$, $g^+(-x,0;\alpha)<0$
for $(x,0)\in I^-_1$ and $\alpha\in U_3$ by the sign-preserving property of continuous functions again. Hence $(-\varrho_1(\alpha),0)\in I^-_1$ is a visible fold of the upper subsystem with $f^+(-\varrho_1(\alpha),0;\alpha)g^+_x(-\varrho_1(\alpha),0;\alpha)>0$ and a regular point of the lower subsystem with $g^+(\varrho_1(\alpha),0;\alpha)<0$ for $\alpha\in U_3$. In this case, it is easy to verify that $I^-_1$ is split into an upward crossing segment and a stable sliding segment $S^-_1$, on which the sliding orbit is rightward (resp. leftward) when $f^+(-a,0;0)>0$ (resp. $<0$), by a standard analysis with the related notions. That is, statement (1) holds. Using the $\mathbb{Z}_2$-symmetry of system (\ref{Z2system}), we obtain statement (2) directly from statement (1).
\end{proof}

The next lemma states the transition of cycles as $\alpha$ varies.

\begin{lm}\label{codimenonecycexste1}
Under the assumption of Theorem~\ref{codim-1-bifur}, for any sufficiently small annulus $\mathcal{A}$ of $\Gamma_0$ there exists a neighborhood $U$ of $\alpha=0$ and a smooth function
\begin{equation}\label{cjrwecdsf}
\rho_1(\alpha)=\sum_{i=1}^m\theta_i\alpha_i+\mathcal{O}(\|\alpha\|^2)
\end{equation}
defined in $U$ such that the following statements hold for $\alpha\in U$.
\begin{itemize}
\item[{\rm(1)}] If $f^+(-a,0;0)\rho_1(\alpha)<0$, system $(\ref{Z2system})$ has a unique crossing cycle in $\mathcal{A}$, which is hyperbolically stable and does not $($resp. does$)$ enclose the regular-folds $(-\varrho_1(\alpha),0)$ and $(\varrho_1(\alpha),0)$ when $f^+(-a,0;0)>0$ $($resp. $<0)$. If $f^+(-a,0;0)\rho_1(\alpha)\ge0$, system $(\ref{Z2system})$ has no crossing cycles in $\mathcal{A}$.
\item[{\rm(2)}] If $\rho_1(\alpha)=0$, system $(\ref{Z2system})$ has a unique critical crossing cycle in $\mathcal{A}$, which crosses $\Sigma$ at the regular-folds $(-\varrho_1(\alpha),0)$ and $(\varrho_1(\alpha),0)$ and is internally $($resp. externally$)$ stable when $f^+(-a,0;0)>0$ $($resp. $<0)$. If $\rho_1(\alpha)\ne0$, system $(\ref{Z2system})$ has no critical crossing cycles in $\mathcal{A}$.
\item[{\rm(3)}] If $f^+(-a,0;0)\rho_1(\alpha)>0$, system $(\ref{Z2system})$ has a unique sliding cycle in $\mathcal{A}$, which is stable and slides on $S^-_1$ and $S^+_1$ simultaneously. If $f^+(-a,0;0)\rho_1(\alpha)\le0$, system $(\ref{Z2system})$ has no sliding cycles in $\mathcal{A}$.
\end{itemize}
\end{lm}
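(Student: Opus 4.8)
Here is how I would prove Lemma~\ref{codimenonecycexste1}.

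The plan is to use the $\mathbb{Z}_2$-symmetry to reduce the full first return map of $\Gamma_0$ to a half-return map on a short transversal of $\Sigma$ at $(-a,0)$, to build this half-return map—incorporating the sliding dynamics from Lemma~\ref{codimenoneslidingdy}—out of the transition map of Lemma~\ref{dafsferwerc}, and then to obtain all three kinds of cycles from the unique fixed point of the resulting piecewise-smooth contraction, whose position relative to the fold $(-\varrho_1(\alpha),0)$ distinguishes the cases.

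First I would record the relevant transition map. By Lemma~\ref{dafsferwerc} the upper subsystem defines a $C^k$ map $p(x;\alpha):=\sigma(x,0;\alpha)$ from a neighbourhood of $x=-a$ on $\Sigma$ to $\Sigma$ near $(a,0)$, with the derivative values at $(-a,0;0)$ listed there; moreover, since $(-\varrho_1(\alpha),0)$ is a fold of the upper subsystem for every small $\alpha$ by Lemma~\ref{codimenoneslidingdy}, $p(\cdot;\alpha)$ has a critical point precisely at $x=-\varrho_1(\alpha)$. In the coordinate $\xi:=x+\varrho_1(\alpha)$ centred at the left fold (and $\eta:=x-\varrho_1(\alpha)$ at the right one), set $P_1(\xi;\alpha):=p(\xi-\varrho_1(\alpha);\alpha)-\varrho_1(\alpha)$, so $P_{1,\xi}(0;\alpha)\equiv0$; combining those derivative values with the expansion $(\ref{2324sff})$ of $\varrho_1$, the function $\rho_1(\alpha):=P_1(0;\alpha)=p(-\varrho_1(\alpha);\alpha)-\varrho_1(\alpha)$ is $C^k$ with $\partial_{\alpha_i}\rho_1(0)=-\kappa_i/g^+(a,0;0)-g^+_{\alpha_i}(-a,0;0)/g^+_x(-a,0;0)=\theta_i$ by $(\ref{3892fcsfvdfg})$, which gives $(\ref{cjrwecdsf})$, while $P_{1,\xi\xi}(0;0)=g^+_x(-a,0;0)\lambda(0)/g^+(a,0;0)\ne0$ has sign $-\operatorname{sign}f^+(-a,0;0)$ by {\bf(H1)}--{\bf(H2)}.

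Next I would assemble the half-return map and locate its fixed point. Because the two subsystems are interchanged by $(x,y,t)\mapsto(-x,-y,t)$, one turn around $\Gamma_0$ equals the square of the map $\widehat R(\cdot;\alpha)$ that follows the upper subsystem from $\Sigma$ near $(-a,0)$ to $\Sigma$ near $(a,0)$ and then reflects through $O$; using the splitting of $\Sigma$ in Lemma~\ref{codimenoneslidingdy}, one has $\widehat R(\xi;\alpha)=-P_1(\xi;\alpha)$ on the crossing part of $I^-_1$ (and at the fold) and $\widehat R(\xi;\alpha)=-\rho_1(\alpha)$ on the stable sliding segment $S^-_1$, since every orbit on $S^-_1$ slides to the visible fold $(-\varrho_1(\alpha),0)$ and then leaves along the upper subsystem, landing at $\eta=\rho_1(\alpha)$, whose reflection is $\xi=-\rho_1(\alpha)$. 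As $-P_1(0;\alpha)=-\rho_1(\alpha)$ and $P_{1,\xi}(0;\alpha)=0$, the map $\widehat R(\cdot;\alpha)$ is continuous with derivative tending to $0$ at the fold from both sides, so for $\mathcal{A}$ thin and $U$ small it contracts a short $\xi$-interval into itself and has a unique fixed point $\xi^*(\alpha)$; on the crossing branch $-P_1(\xi;\alpha)+\xi$ has $\xi$-derivative $1$ at $\xi=0$, so $\xi^*(\alpha)=-\rho_1(\alpha)+\mathcal{O}(\|\alpha\|^2)$ there, while on the sliding branch $\widehat R\equiv-\rho_1(\alpha)$, so a fixed point on $S^-_1$ can only be $-\rho_1(\alpha)$. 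By Lemma~\ref{codimenoneslidingdy} the crossing part of $I^-_1$ is $\{\xi>0\}$ and $S^-_1=\{\xi<0\}$ when $f^+(-a,0;0)>0$, with these signs reversed when $f^+(-a,0;0)<0$; hence $\xi^*(\alpha)$ lies in the crossing region exactly when $f^+(-a,0;0)\rho_1(\alpha)<0$, equals $0$ exactly when $\rho_1(\alpha)=0$, and lies on $S^-_1$ exactly when $f^+(-a,0;0)\rho_1(\alpha)>0$, which yields the existence/nonexistence dichotomies in statements (1)--(3).

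It then remains to add the qualitative information and deal with the sliding part. Uniqueness of each cycle, the absence of non-$\mathbb{Z}_2$-symmetric ones, and the convergence of every orbit of $(\ref{Z2system})|_{\mathcal{A}}$ to the appropriate cycle all follow from the contraction property (a contraction has no $2$-cycle, so the fixed point of $\widehat R\circ\widehat R$ coincides with that of $\widehat R$); the crossing cycle is hyperbolically stable since $0\le(\widehat R\circ\widehat R)'(\xi^*)=P_{1,\xi}(\xi^*;\alpha)^2<1$; and the internal-versus-external stability of the critical crossing cycle together with whether the crossing cycle encloses the regular-folds $(\pm\varrho_1(\alpha),0)$ I would read off from the sign of $P_{1,\xi\xi}(0;0)$, from the fact that the portion of $\Sigma$ inside the critical crossing cycle is $\{\xi>0\}$ near $(-a,0)$, and from the sliding directions recorded in Lemma~\ref{codimenoneslidingdy}. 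The step I expect to be the main obstacle is precisely this sliding part: there the return map is non-smooth and collapses onto a single orbit, so the usual implicit-function and hyperbolicity arguments do not apply directly, and one must argue from the explicit piecewise form of $\widehat R$ that exactly on the parameter side $f^+(-a,0;0)\rho_1(\alpha)>0$ a unique, globally attracting sliding cycle appears and simultaneously uses $S^-_1$ and $S^+_1$ (this last via the symmetry of $\Gamma_0$); a secondary nuisance is carrying the two sign-reversed regimes $f^+(-a,0;0)>0$ and $f^+(-a,0;0)<0$ consistently through the geometric conclusions about enclosure and internal/external stability.
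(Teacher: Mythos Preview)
Your proposal is correct and follows essentially the same route as the paper: both reduce, via the $\mathbb{Z}_2$-symmetry, to the map $\mathcal{T}_1(x;\alpha)=\sigma(x,0;\alpha)+x$ (your fixed-point equation $\xi+P_1(\xi;\alpha)=0$ is exactly $\mathcal{T}_1(x;\alpha)=0$ after the shift $\xi=x+\varrho_1(\alpha)$), use $\sigma_x(-a,0;0)=0$ from Lemma~\ref{dafsferwerc} to get a unique zero, and decide the three cases by comparing that zero with the fold $-\varrho_1(\alpha)$. The only cosmetic difference is that the paper takes $\rho_1(\alpha):=-\varrho_1(\alpha)-\varrho_2(\alpha)$ (distance from fold to the zero $\varrho_2$ of $\mathcal{T}_1$) whereas you take $\rho_1(\alpha):=\mathcal{T}_1(-\varrho_1(\alpha);\alpha)$, and these have the same sign, zero set, and first-order expansion since $\partial_x\mathcal{T}_1=1+\mathcal{O}(\|\alpha\|)>0$.
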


\begin{proof}
For any sufficiently small annulus $\mathcal{A}$ of $\Gamma_0$, letting $\varepsilon>0$ be the constant such that $\mathcal{A}\cap\Sigma=\{(x,0):|x+a|<\varepsilon\}\cup\{(x,0):|x-a|<\varepsilon\}$, we can ensure $\varepsilon<\min\{\varepsilon_2,\varepsilon_3\}$, where $\varepsilon_2$ and $\varepsilon_3$ are given in Lemma~\ref{dafsferwerc} and Lemma~\ref{codimenoneslidingdy} respectively. Then there exists a neighborhood $U\subset U_2\cap U_3$ of $\alpha=0$ such that
\begin{equation}\label{T1map}
\mathcal{T}_1(x;\alpha):=\sigma(x,0;\alpha)+x\qquad{\rm for}~~|x+a|<\varepsilon,~ \alpha\in U,
\end{equation}
is defined well, where $U_2$ and $\sigma(x,0;\alpha)$ are given in Lemma~\ref{dafsferwerc}, $U_3$ is given in Lemma~\ref{codimenoneslidingdy}.

When $f^+(-a,0;0)>0$ (resp. $<0$), based on Lemma~\ref{codimenoneslidingdy} and the $\mathbb{Z}_2$-symmetry of (\ref{Z2system}), the crossing cycles of system (\ref{Z2system}) in $\mathcal{A}$ are in one-to-one correspondence with the zeros of $\mathcal{T}_1(x;\alpha)$ in $(-\varrho_1(\alpha),-a+\varepsilon)$ (resp. $(-a-\varepsilon,-\varrho_1(\alpha))$) for $\alpha\in U$. Moreover, if it exists, the crossing cycle does not (resp. does) enclose the regular-folds $(-\varrho_1(\alpha),0)$ and $(\varrho_1(\alpha),0)$ when $f^+(-a,0;0)>0$ (resp. $<0$). On the other hand, by Lemma~\ref{dafsferwerc} and the Implicit Function Theorem, $U$ can be chosen to ensure that for $\alpha\in U$ the map $\mathcal{T}_1(x;\alpha)$ has a unique zero $x=\varrho_2(\alpha)$ in $(-a-\varepsilon,-a+\varepsilon)$, where $\varrho_2(\alpha)$ is smooth and
\begin{equation}\label{48jsvsd}
\varrho_2(\alpha)=-a+\sum_{i=1}^m\frac{\kappa_i}{g^+(a,0;0)}\alpha_i+\mathcal{O}(\|\alpha\|^2).
\end{equation}
Take
$\rho_1(\alpha):=-\varrho_1(\alpha)-\varrho_2(\alpha)$
for $\alpha\in U$. Then system (\ref{Z2system}) for $\alpha\in U$ has a unique crossing cycle in $\mathcal{A}$ if $f^+(-a,0;0)\rho_1(\alpha)<0$, while if $f^+(-a,0;0)\rho_1(\alpha)\ge0$, it has no crossing cycles in $\mathcal{A}$. The expansion (\ref{cjrwecdsf}) comes from (\ref{3892fcsfvdfg}), (\ref{2324sff}) and (\ref{48jsvsd}) directly. Besides, the crossing cycle is hyperbolic and stable due to $\partial\mathcal{T}_1(\varrho_2(\alpha),\alpha)/\partial x=1+\mathcal{O}(\|\alpha\|)>0$. Thus statement (1) holds.

Based on Lemma~\ref{codimenoneslidingdy} and the $\mathbb{Z}_2$-symmetry of (\ref{Z2system}), a critical crossing cycle of system (\ref{Z2system}) in $\mathcal{A}$ must cross $\Sigma$ at the regular-folds $(-\varrho_1(\alpha),0)$ and $(\varrho_1(\alpha),0)$. Hence, system (\ref{Z2system}) for $\alpha\in U$ has at most one critical crossing cycle in $\mathcal{A}$ and there is a critical crossing cycle if and only if $x=-\varrho_1(\alpha)$ is a zero of $\mathcal{T}_1(x;\alpha)$. From the analysis on the zeros of $\mathcal{T}_1(x;\alpha)$ in the last paragraph, $x=-\varrho_1(\alpha)$ is a zero of $\mathcal{T}_1(x;\alpha)$ if and only if $\rho_1(\alpha)=0$. Due to $\partial\mathcal{T}(\varrho_2(\alpha),\alpha)/\partial x=1+\mathcal{O}(\|\alpha\|)>0$, the critical crossing cycle is internally (resp. externally) stable when $f^+(-a,0;0)>0$ (resp. $<0$). This concludes statement (2).

Finally, from the sliding dynamics obtained in Lemma~\ref{codimenoneslidingdy} and the $\mathbb{Z}_2$-symmetry of (\ref{Z2system}), a sliding cycle of system (\ref{Z2system}) in $\mathcal{A}$ will slide on $S^-_1$ and enter into $\Sigma^+$ at $(-\varrho_1(\alpha),0)$ until it reaches $\Sigma$ again in $S^+_1$, and subsequently, the sliding cycle will slide on $S^+_1$ and enter into $\Sigma^-$ at $(\varrho_1(\alpha),0)$ until it returns to $S^-_1$. Hence, system (\ref{Z2system}) for $\alpha\in U$ has at most one sliding cycle in $\mathcal{A}$ and there is a sliding cycle if and only if $f^+(-a,0;0)\mathcal{T}_1(-\varrho_1(\alpha);\alpha)>0$, which is equivalent to $f^+(-a,0;0)\rho_1(\alpha)>0$. The stability of a sliding cycle is due to the stability of its sliding segments. Thus statement (3) holds.
\end{proof}

\begin{proof}[{\bf Proof of Theorem~\ref{codim-1-bifur}}]
For any sufficiently small annulus $\mathcal{A}$ of $\Gamma_0$, we take $U$ as the neighborhood of $\alpha=0$ given in Lemma~\ref{codimenonecycexste1}, and $\rho_1(\alpha)$ for $\alpha\in U$ as the function given in (\ref{cjrwecdsf}).
If $(\theta_1,\theta_2,\cdots,\theta_m)\ne0$, there is $j_1\in\{1,2,\cdots,m\}$ such that
\begin{equation}\label{38vsvd343few}
\theta_{j_1}\ne0.
\end{equation}
Let
\begin{equation}\label{sfaacdsn34afr}
\rho_i(\alpha):=\alpha_{j_i},\quad i=2,3,\cdots,m
\end{equation}
such that $\alpha_{j_i}\ne\alpha_{j_1}$ for all $i=2,3,\cdots,m$ and $\alpha_{j_k}\ne\alpha_{j_l}$ for $k\ne l$. Then the Jacobian matrix of
\begin{equation}\label{cdshgjgjn3465}
\beta=(\beta_1,\beta_2,\cdots,\beta_m)=(\rho_1(\alpha),\rho_2(\alpha),\cdots,\rho_m(\alpha))
\end{equation}
at $\alpha=0$ is nonsingular from (\ref{cjrwecdsf}), (\ref{38vsvd343few}) and (\ref{sfaacdsn34afr}). Thus (\ref{cdshgjgjn3465}) is a diffeomorphism from $U$ to its range $V$, where $U$ can be reduced if necessary.
Finally, combining the dynamics on $\Sigma$ and the information of various cycles given in Lemma~\ref{codimenoneslidingdy} and Lemma~\ref{codimenonecycexste1} respectively,
we get that for any $(\beta_2^*,\beta_3^*,\cdots,\beta_m^*)\in V^*$ the bifurcation diagram of system $\left.(\ref{Z2system})\right|_{\mathcal{A}}$ on the hyperplane $(\beta_2,\beta_3,\cdots,\beta_m)=(\beta_2^*,\beta_3^*,\cdots,\beta_m^*)$ is the one shown in Figure~\ref{codim-1bifurdia1} $($resp. Figure~\ref{codim-1bifurdia2}$)$ when $f^+(-a,0;0)>0$ $($resp. $<0$$)$, i.e. Theorem~\ref{codim-1-bifur} holds.
\end{proof}

\section{Proof of Theorem~\ref{codim-2-cusp-bifur}}
\setcounter{equation}{0}
\setcounter{lm}{0}
\setcounter{thm}{0}
\setcounter{rmk}{0}
\setcounter{df}{0}
\setcounter{cor}{0}

We only prove Theorem~\ref{codim-2-cusp-bifur} for $f^+(-a,0;0)>0$, as an analogous proof for $f^+(-a,0;0)<0$ can be obtained by employing the methodology of this section. First of all, we state the dynamics on $\Sigma$ in the following lemma.

\begin{lm}\label{sldynamcierwe3}
Under the assumptions {\bf(H1)$^\prime$}, {\bf(H2)} and $f^+(-a,0;0)>0$, for any sufficiently small $\varepsilon_4>0$, there exists a neighborhood $U_4$ of $\alpha=0$ and two smooth functions
\begin{equation}\label{489jvdcsd}
\phi_1(\alpha)=\sum_{i=1}^m\zeta_i\alpha_i+\mathcal{O}(\|\alpha\|^2),\qquad \xi_1(\alpha)=a+\sum_{i=1}^m\frac{g^+_{x\alpha_i}(-a,0;0)}{g^+_{xx}(-a,0;0)}\alpha_i+\mathcal{O}(\|\alpha\|^2)
\end{equation}
defined in $U_4$ such that the following statements hold for system $(\ref{Z2system})$ with $\alpha\in U_4$.
\begin{itemize}
\item[{\rm(1)}] If $\phi_1(\alpha)=0$, $I^-_2:=\{(x,0):|x+a|<\varepsilon_4\}$ is split into two upward crossing segments by $T^-_c:=(-\xi_1(\alpha),0)$, which is a cusp of the upper subsystem satisfying $f^+(-\xi_1(\alpha),0;\alpha)>0$ and $g^+_{xx}(-\xi_1(\alpha),0;\alpha)>0$ and a regular point of the lower subsystem. If $\phi_1(\alpha)<0$, $I^-_2$ is an upward crossing segment. If $\phi_1(\alpha)>0$, $I^-_2$ is split into two upward crossing segments and a stable sliding segment $$S^-_2:=\left\{(x,0):-\sqrt{\phi_1(\alpha)}-\xi_1(\alpha)<x<\sqrt{\phi_1(\alpha)}-\xi_1(\alpha)\right\}$$
    by $T^-_{iv}$ and $T^-_v$, where $T^-_{iv}:=(-\sqrt{\phi_1(\alpha)}-\xi_1(\alpha),0)$ is an invisible fold of the upper subsystem and a regular point of the lower subsystem, $T^-_{v}:=(\sqrt{\phi_1(\alpha)}-\xi_1(\alpha),0)$ is a visible fold of the upper subsystem and a regular point of the lower subsystem. Moreover, the sliding orbit on $S^-_{2}$ is rightward.
\item[{\rm(2)}] The dynamics exhibited on $I^+_2:=\{(x,0):|x-a|<\varepsilon_4\}$ and $I^-_2$ are $\mathbb{Z}_2$-symmetric with respect to $O$. That is, if $\phi_1(\alpha)=0$, $I^+_2$ is split into two downward crossing segments by $T^+_c:=(\xi_1(\alpha),0)$, which is a cusp of the lower subsystem satisfying $f^+(-\xi_1(\alpha),0;\alpha)>0$ and $g^+_{xx}(-\xi_1(\alpha),0;\alpha)>0$ and a regular point of the upper subsystem. If $\phi_1(\alpha)<0$, $I^+_2$ is a downward crossing segment. If  $\phi_1(\alpha)>0$, $I^+_2$ is split into two downward crossing segments and a stable sliding segment
    $$S^+_2:=\left\{(x,0):-\sqrt{\phi_1(\alpha)}+\xi_1(\alpha)<x<\sqrt{\phi_1(\alpha)}+\xi_1(\alpha)\right\}$$
    by $T^+_v$ and $T^+_{iv}$, where $T^+_{v}:=(-\sqrt{\phi_1(\alpha)}+\xi_1(\alpha),0)$ is a visible fold of the lower subsystem and a regular point of the upper subsystem, $T^+_{iv}:=(\sqrt{\phi_1(\alpha)}+\xi_1(\alpha),0)$ is an invisible fold of the lower subsystem and a regular point of the upper subsystem. Moreover, the sliding orbit on $S^+_{2}$ is leftward.
\end{itemize}
\end{lm}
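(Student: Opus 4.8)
The whole statement is governed by the behaviour of the single function $p(x;\alpha):=g^+(x,0;\alpha)$ on the interval $\{|x+a|<\varepsilon_4\}$, so the plan is first to reduce to it. On $\Sigma=\{y=0\}$ one has $Z^+h=g^+$ and, by the $\mathbb{Z}_2$-symmetry of $(\ref{Z2system})$, $Z^-h(x,0;\alpha)=-g^+(-x,0;\alpha)$; since $g^+(a,0;0)<0$ by {\bf(H2)}, after shrinking $\varepsilon_4$ and $U_4$ the sign-preserving property of continuous functions gives $Z^-h(x,0;\alpha)>0$ and $f^+(x,0;\alpha)>0$ on $I^-_2\times U_4$. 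Hence on $I^-_2$ one has $\mathrm{sgn}\bigl(Z^+h\cdot Z^-h\bigr)=\mathrm{sgn}\,p$, so $I^-_2\cap\Sigma^c=\{p>0\}$, $I^-_2\cap\Sigma^s=\{p\le 0\}$, $I^-_2\cap\partial\Sigma^s=\{p=0\}$; every sliding point is stable because $Z^+h=p<0<Z^-h$; at a crossing point both fields point into $\Sigma^+$, so the crossing is upward; and every zero of $p$ lies in $\partial\Sigma^s$ with $Z^+\ne 0$, hence is a tangent point of the upper subsystem and, as $Z^-h>0$, a regular point of the lower one. Thus statement (1) reduces to locating the zeros of $p(\cdot;\alpha)$ and deciding the type (fold or cusp, visible or invisible) of each.

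Next I would unfold $p$ near $x=-a$. By {\bf(H1)$^\prime$}, $p(-a;0)=0$, $p_x(-a;0)=0$, $p_{xx}(-a;0)=g^+_{xx}(-a,0;0)>0$, so the Implicit Function Theorem applied to $p_x=0$ produces a smooth $x=-\xi_1(\alpha)$ with $\xi_1(0)=a$ and $p_x(-\xi_1(\alpha);\alpha)\equiv 0$, and differentiating this identity yields the expansion of $\xi_1$ in $(\ref{489jvdcsd})$. Shrinking $\varepsilon_4,U_4$ we may assume $p_{xx}>0$ on $I^-_2\times U_4$, so $p(\cdot;\alpha)$ is strictly convex on $I^-_2$ with unique minimum $M(\alpha):=p(-\xi_1(\alpha);\alpha)$ (smooth, $M(0)=0$, $M(\alpha)=\sum_i g^+_{\alpha_i}(-a,0;0)\alpha_i+\mathcal{O}(\|\alpha\|^2)$), while $p>0$ at the two endpoints of $I^-_2$ for all $\alpha\in U_4$. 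Writing $p(x;\alpha)=M(\alpha)+(x+\xi_1(\alpha))^2 S(x;\alpha)$ by Taylor's formula with integral remainder, with $S$ continuous and $S(-\xi_1(\alpha);\alpha)=\tfrac12 g^+_{xx}(-\xi_1(\alpha),0;\alpha)>0$ (shrink once more so $S>0$), a parametrized Morse normalization — the $\alpha$-smooth, orientation-preserving change of the $\Sigma$-coordinate $x\mapsto(x+\xi_1(\alpha))\sqrt{2S(x;\alpha)/g^+_{xx}(-\xi_1(\alpha),0;\alpha)}-\xi_1(\alpha)$ — turns $p$ into $M(\alpha)+\tfrac12 g^+_{xx}(-\xi_1(\alpha),0;\alpha)(x+\xi_1(\alpha))^2$. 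Setting $\phi_1(\alpha):=-2M(\alpha)/g^+_{xx}(-\xi_1(\alpha),0;\alpha)$ we obtain $\phi_1$ smooth with $\phi_1(0)=0$, $\mathrm{sgn}\,\phi_1=-\mathrm{sgn}\,M$ and $\phi_1(\alpha)=\sum_i\zeta_i\alpha_i+\mathcal{O}(\|\alpha\|^2)$ with $\zeta_i$ as in $(\ref{ew45fsd})$; and the zeros of $p(\cdot;\alpha)$ in $I^-_2$ are none if $\phi_1(\alpha)<0$, the single point $-\xi_1(\alpha)$ if $\phi_1(\alpha)=0$, and exactly $-\xi_1(\alpha)\pm\sqrt{\phi_1(\alpha)}$ if $\phi_1(\alpha)>0$.

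Then I would read off the three configurations, using $p=g^+$, $Z^-h>0$, $f^+>0$. If $\phi_1(\alpha)<0$, $p>0$ on $I^-_2$, which is therefore a single upward crossing segment. If $\phi_1(\alpha)=0$, $p\ge 0$ with the only zero $T^-_c=(-\xi_1(\alpha),0)$; there $p=p_x=0$, so $(Z^+)^2h=f^+p_x=0$ and $(Z^+)^3h=(f^+)^2p_{xx}\ne 0$, i.e. $T^-_c$ is a cusp of the upper subsystem, with $f^+>0$ and $g^+_{xx}>0$ near it by continuity, and $I^-_2\setminus\{T^-_c\}$ is two upward crossing segments. If $\phi_1(\alpha)>0$, $p<0$ on $S^-_2$ (hence a stable sliding segment) and $p>0$ on the rest of $I^-_2$ (two upward crossing segments); the endpoints $T^-_{iv}=(-\sqrt{\phi_1(\alpha)}-\xi_1(\alpha),0)$ and $T^-_v=(\sqrt{\phi_1(\alpha)}-\xi_1(\alpha),0)$ are folds of the upper subsystem with $p_x<0$ and $p_x>0$ respectively, so $(Z^+)^2h=f^+p_x<0$ at $T^-_{iv}$ (invisible) and $>0$ at $T^-_v$ (visible), both being regular points of the lower subsystem. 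For the sliding direction on $S^-_2$, the Filippov field $(\ref{generalslidd})$ has first component $\mu f^-+(1-\mu)f^+$ with $\mu=g^+/(g^+-g^-)$; on $S^-_2$, $g^+$ is small (bounded by $|M(\alpha)|$) while $g^-$ and $f^+$ are positive and bounded away from $0$, so $\mu$ is small and this component has the sign of $f^+(-a,0;0)>0$, i.e. the sliding orbit is rightward. This proves (1). Statement (2) follows by applying the $\mathbb{Z}_2$-symmetry $(x,y)\mapsto(-x,-y)$, which carries $I^-_2$ to $I^+_2$, interchanges the two subsystems, converts upward crossings into downward ones and a cusp / visible fold / invisible fold of the upper subsystem into the corresponding object of the lower one, and reverses the sense of motion along $\Sigma$, so that $S^-_2$, $T^-_c$, $T^-_{iv}$, $T^-_v$ and the rightward sliding orbit map exactly to $S^+_2$, $T^+_c$, $T^+_{iv}$, $T^+_v$ and a leftward sliding orbit.

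The step I expect to be the main obstacle is the passage from ``$p(\cdot;\alpha)$ has a unique minimum $M(\alpha)$, hence two nearby zeros when $M(\alpha)<0$'' to the \emph{precise} locations $-\xi_1(\alpha)\pm\sqrt{\phi_1(\alpha)}$: this is where the collision of two zeros at $\alpha=0$ must be unfolded, which is exactly what the parametrized Morse normalization achieves, and where one must track the loss of derivatives incurred in dividing by $(x+\xi_1(\alpha))^2$ (harmless since each subsystem is $C^k$ with $k\ge 3$). The remaining ingredients — the sign bookkeeping on $\Sigma$, the Lie-derivative identities distinguishing folds from cusps, the leading-order estimate of the sliding field, and the symmetry reduction for (2) — are routine and parallel the argument in the proof of Lemma~\ref{codimenoneslidingdy}.
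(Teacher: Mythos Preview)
Your proof is correct and follows essentially the same route as the paper's: locate the critical point of $p(x;\alpha)=g^+(x,0;\alpha)$ via the Implicit Function Theorem to define $\xi_1$, reduce $p$ to a quadratic normal form near that point to define $\phi_1$ and extract the zeros, classify the resulting tangent points from the signs of $p_x$ and $p_{xx}$, and then transfer everything to $I^+_2$ by the $\mathbb{Z}_2$-symmetry. The only cosmetic difference is that you package the quadratic reduction as a parametrized Morse lemma, whereas the paper performs the linear shift $\tilde x=x+a+\tilde\xi_1$ explicitly and then applies the Implicit Function Theorem with $\tilde x^{\,2}$ treated as a new variable.
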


\begin{proof}
Since $g^+(-a,0;0)=0$ in {\bf(H1)$^\prime$},
we can write $g^+(x,0;\alpha)$ around $x=-a$ and $\alpha=0$ in the form
$$
\begin{aligned}
g^+(x,0;\alpha)=&\sum_{i=1}^mg^+_{\alpha_i}(-a,0;0)\alpha_i+\mathcal{O}(\|\alpha\|^2)+\left(\sum_{i=1}^mg^+_{x\alpha_i}(-a,0;0)\alpha_i+\mathcal{O}(\|\alpha\|^2)\right)(x+a)\\
&+\frac{1}{2}\left(g^+_{xx}(-a,0;0)+\mathcal{O}(\|\alpha\|)\right)(x+a)^2+\mathcal{O}((x+a)^3).
\end{aligned}
$$
Perform a linear coordinate shift by introducing a new variable $\tilde x=I(x;\alpha):=x+a+\tilde\xi_1,$
where $\tilde\xi_1=\tilde\xi_1(\alpha)$ is a priori unknown function that will be determined later. Thus
\begin{equation}\label{ekto5y53}
\begin{aligned}
g^+\circ I^{-1}(\tilde x;\alpha)=&\sum_{i=1}^mg^+_{\alpha_i}(-a,0;0)\alpha_i+\mathcal{O}(\|\alpha\|^2)+\left(\sum_{i=1}^mg^+_{x\alpha_i}(-a,0;0)\alpha_i+\mathcal{O}(\|\alpha\|^2)\right)(\tilde x-\tilde\xi_1)\\
&+\frac{1}{2}\left(g^+_{xx}(-a,0;0)+\mathcal{O}(\|\alpha\|)\right)(\tilde x-\tilde\xi_1)^2+\mathcal{O}((\tilde x-\tilde\xi_1)^3)\\
=&\sum_{i=1}^mg^+_{\alpha_i}(-a,0;0)\alpha_i+\mathcal{O}(\|\alpha\|^2)-\left(\sum_{i=1}^mg^+_{x\alpha_i}(-a,0;0)\alpha_i+\mathcal{O}(\|\alpha\|^2)\right)\tilde\xi_1+\mathcal{O}(\tilde\xi_1^2)\\
&+\left(\sum_{i=1}^mg^+_{x\alpha_i}(-a,0;0)\alpha_i+\mathcal{O}(\|\alpha\|^2)-\left(g^+_{xx}(-a,0;0)+\mathcal{O}(\|\alpha\|)\right)\tilde\xi_1+\mathcal{O}(\tilde\xi_1^2)\right)\tilde x\\
&+\frac{1}{2}\left(g^+_{xx}(-a,0;0)+\mathcal{O}(\|\alpha\|)+\mathcal{O}(\tilde\xi_1)\right)\tilde x^2+\mathcal{O}(\tilde x^3).
\end{aligned}
\end{equation}
Let $L(\alpha,\tilde\xi_1)$ be the coefficient of $\tilde x$ in (\ref{ekto5y53}). Then
$L(0,0)=0, {\partial L(0,0)}/{\partial\tilde\xi_1}=-g^+_{xx}(-a,0;0)$, ${\partial L(0,0)}/{\partial\alpha_i}=g^+_{x\alpha_i}(-a,0;0)$.
Due to $g^+_{xx}(-a,0;0)>0$ in {\bf(H1)$^\prime$}, by the Implicit Function Theorem there is a neighborhood $\widetilde U_4$ of $\alpha=0$, a unique and smooth function $\tilde\xi_1=\tilde\xi_1(\alpha): \widetilde U_4\rightarrow\mathbb{R}$ satisfying $\tilde\xi_1(0)=0$ and $L(\alpha,\tilde\xi_1(\alpha))=0$. In addition,
\begin{equation}\label{90dsf}
\tilde\xi_1(\alpha)=\sum_{i=1}^m\frac{g^+_{x\alpha_i}(-a,0;0)}{g^+_{xx}(-a,0;0)}\alpha_i+\mathcal{O}(\|\alpha\|^2).
\end{equation}

Substituting $\tilde\xi_1(\alpha)$ into (\ref{ekto5y53}), we obtain
$$
g^+\circ I^{-1}(\tilde x;\alpha)=\sum_{i=1}^mg^+_{\alpha_i}(-a,0;0)\alpha_i+\mathcal{O}(\|\alpha\|^2)+\frac{1}{2}\left(g^+_{xx}(-a,0;0)+\mathcal{O}(\|\alpha\|)\right)\tilde x^2+\mathcal{O}(\tilde x^3).
$$
Due to $g^+_{xx}(-a,0;0)>0$, regarding $\tilde x^2$ as a new variable and using the Implicit Function Theorem, we can reduce $\widetilde U_4$ to ensure that for $\alpha\in\widetilde U_4$ there is a unique and smooth function
$$\phi_1(\alpha)=-\sum_{i=1}^m\frac{2g^+_{\alpha_i}(-a,0;0)}{g^+_{xx}(-a,0;0)}\alpha_i+\mathcal{O}(\|\alpha\|^2)=\sum_{i=1}^m\zeta_i\alpha_i+\mathcal{O}(\|\alpha\|^2)$$
such that $g^+\circ I^{-1}(\tilde x;\alpha)$ near $\tilde x=0$ has a unique zero $\tilde x=\tilde x^*_0:=0$ if $\phi_1(\alpha)=0$, no zeros if $\phi_1(\alpha)<0$, and exactly two zeros
$\tilde x=\tilde x^*_\pm:=\pm\sqrt{\phi_1(\alpha)}$ if $\phi_1(\alpha)>0$.
Moreover,
$$
\begin{aligned}
\frac{\partial g^+\circ I^{-1}(\tilde x^*_0;\alpha)}{\partial \tilde x}=&~0,\qquad \frac{\partial^2 g^+\circ I^{-1}(\tilde x^*_0;\alpha)}{\partial \tilde x^2}=g^+_{xx}(-a,0;0)+\mathcal{O}\left(\|\alpha\|\right),\\
\frac{\partial g^+\circ I^{-1}(\tilde x^*_\pm;\alpha)}{\partial \tilde x}=&~\pm\left( g^+_{xx}(-a,0;0)+\mathcal{O}\left(\|\alpha\|\right)+\mathcal{O}(\sqrt{\phi_1(\alpha)})\right)\sqrt{\phi_1(\alpha)}.
\end{aligned}
$$
As a result, for any sufficiently small $\varepsilon_4>0$ there is a neighborhood $U_4\subset \widetilde U_4$ of $\alpha=0$ such that the following conclusions hold for $\alpha\in U_4$:
\begin{description}
\setlength{\itemsep}{0mm}
\item[{\rm(i)}] If $\phi_1(\alpha)=0$, $g^+(x,0;\alpha)$ has a unique zero $x=x^*_0:=\tilde x^*_0-\xi_1(\alpha)=-\xi_1(\alpha)$ in $(-a-\varepsilon_4,-a+\varepsilon_4)$, which is of multiplicity two with $g^+_{xx}(x^*_0,0;\alpha)=g^+_{xx}(-a,0;0)+\mathcal{O}\left(\|\alpha\|\right)>0$.
\item[{\rm(ii)}] If $\phi_1(\alpha)<0$, $g^+(x,0;\alpha)$ has no zeros in $(-a-\varepsilon_4,-a+\varepsilon_4)$ and, more precisely, $g^+(x,0;\alpha)>0$.
\item[{\rm(iii)}] If $\phi_1(\alpha)>0$, $g^+(x,0;\alpha)$ has exactly two zeros $x=x^*_\pm:=\tilde x^*_\pm-\xi_1(\alpha)=\pm\sqrt{\phi_1(\alpha)}-\xi_1(\alpha)$ in $(-a-\varepsilon_4,-a+\varepsilon_4)$, which are simple with $g^+_x(x^*_\pm,0;\alpha)=\pm(g^+_{xx}(-a,0;0)+\mathcal{O}\left(\|\alpha\|\right)+\mathcal{O}(\sqrt{\phi_1(\alpha)}))\sqrt{\phi_1(\alpha)}\gtrless0$.
\end{description}
Here $\xi_1(\alpha):=a+\tilde\xi_1(\alpha)$ and thus (\ref{489jvdcsd}) is obtained from (\ref{90dsf}).

On the other hand, by {\bf(H1)$^\prime$}, {\bf(H2)}, $f^+(-a,0;0)>0$ and the sign-preserving property of continuous functions, $U_4$ can be chosen such that
$$f^+(x,0;\alpha)>0,\qquad g^+_{xx}(x,0;\alpha)>0,\qquad g^+(-x,0;\alpha)<0$$
for $(x,0)\in I^-_2$ and $\alpha\in U_4$. Hence for $\alpha\in U_4$ we obtain that $T^-_c\in I^-_2$ is a cusp of the upper subsystem satisfying $f^+(-\xi_1(\alpha),0;\alpha)>0$ and $g^+_{xx}(-\xi_1(\alpha),0;\alpha)>0$, $T^-_{iv}\in I^-_2$ is an invisible fold of the upper subsystem satisfying $f^+(-\sqrt{\phi_1(\alpha)}-\xi_1(\alpha),0;\alpha)>0$, $T^-_{v}\in I^-_2$ is a visible fold of the upper subsystem satisfying $f^+(\sqrt{\phi_1(\alpha)}-\xi_1(\alpha),0;\alpha)>0$. Moreover, they are regular points of the lower subsystem satisfying $g^+(\xi_1(\alpha),0;\alpha)<0$, $g^+(\sqrt{\phi_1(\alpha)}+\xi_1(\alpha),0;\alpha)<0$ and $g^+(-\sqrt{\phi_1(\alpha)}+\xi_1(\alpha),0;\alpha)<0$ respectively. In this case, it is easy to verify that $I^-_2$ is split into two upward crossing segments by $T^-_c$ if $\phi_1(\alpha)=0$, and into two upward crossing segments and a stable sliding segment $S^-_2$ by $T^-_{iv}$ and $T^-_v$ if $\phi_1(\alpha)>0$, while if $\phi_1(\alpha)<0$, $I^-_2$ is an upward crossing segment. Moreover, a direct computation for the sliding vector field yields that the sliding orbit on $S^-_{2}$ is rightward. That is, statement (1) holds. Using the $\mathbb{Z}_2$-symmetry of system (\ref{Z2system}), we obtain statement (2) directly from statement (1).
\end{proof}

Notice that the transition of dynamics near $(-a,0)$ or $(a,0)$ corresponds to the bifurcation induced by the collision of a visible regular-fold and an invisible regular-fold. The bifurcation diagram of such bifurcation has been drawn in \cite{YAK}. Lemma~\ref{sldynamcierwe3} provides a description version associated to the bifurcation diagram
within the context of this paper.

To prove Theorem~\ref{codim-2-cusp-bifur}, we next determine the information on the cycles bifurcating from $\Gamma_0$.

\begin{lm}\label{eiwu343cd}
Under the assumption of Theorem~\ref{codim-2-cusp-bifur} and $f^+(-a,0;0)>0$, for any sufficiently small annulus $\mathcal{A}$ of $\Gamma_0$ there exists a neighborhood $U\subset U_4$ of $\alpha=0$ and smooth functions
\begin{equation}\label{w234dsvsf}
\phi_2(\alpha)=\sum_{i=1}^m\eta_i\alpha_i+\mathcal{O}(\|\alpha\|^2),
\end{equation}
$\xi_2(z,\alpha)$ and $\xi_3(z,\alpha)$ with $\xi_2(0,0)=\xi_2(0,0)=0$ defined for $z$ closed to $0$ and $\alpha\in U$
such that the following statements hold for $\alpha\in U$.
\begin{itemize}
\item[{\rm(1)}] System $(\ref{Z2system})$ has at most one crossing cycle in $\mathcal{A}$, which is hyperbolic and stable if it exists. Moreover, there exits a crossing cycle if and only if one of the following conditions holds:
\begin{itemize}
\item[{\rm(1a)}] $\phi_1(\alpha)<0$;
\item[{\rm(1b)}] $\phi_1(\alpha)=0$ and $\phi_2(\alpha)\ne0$, for which the crossing cycle encloses the regular-cusps $T^-_c$ and $T^+_c$ if $\phi_2(\alpha)>0$, while if $\phi_2(\alpha)<0$, it does not enclose them;
\item[{\rm(1c)}] $\phi_1(\alpha)>0$ and $\phi_2(\alpha)<(-1-\xi_2(\sqrt{\phi_1(\alpha)},\alpha))\sqrt{\phi_1(\alpha)}$, for which the crossing cycle does not enclose any of the regular-folds $T^\pm_{iv}$ and $T^\pm_v$;
\item[{\rm(1d)}] $\phi_1(\alpha)>0$ and $\phi_2(\alpha)>(3+\xi_3(\sqrt{\phi_1(\alpha)},\alpha))\sqrt{\phi_1(\alpha)}$, for which the crossing cycle encloses all of the regular-folds  $T^\pm_{iv}$ and $T^\pm_v$.
\end{itemize}
\item[{\rm(2)}] System $(\ref{Z2system})$ has at most one critical crossing cycle in $\mathcal{A}$. Moreover, there is a critical crossing cycle if and only if one of the following conditions holds:
\begin{itemize}
\item[{\rm(2a)}] $\phi_1(\alpha)=0$ and $\phi_2(\alpha)=0$, for which the critical crossing cycle crosses $\Sigma$ at the regular-cusps $T^-_c$ and $T^+_c$, and it is stable.
\item[{\rm(2b)}] $\phi_1(\alpha)>0$ and $\phi_2(\alpha)=(-1-\xi_2(\sqrt{\phi_1(\alpha)},\alpha))\sqrt{\phi_1(\alpha)}$, for which the critical crossing cycle crosses $\Sigma$ at the regular-folds $T^-_v$ and $T^+_v$, and it is internally stable.
\item[{\rm(2c)}] $\phi_1(\alpha)>0$ and $\phi_2(\alpha)=(3+\xi_3(\sqrt{\phi_1(\alpha)},\alpha))\sqrt{\phi_1(\alpha)}$, for which the critical crossing cycle crosses $\Sigma$ at the points where the backward orbits of the upper subsystem of $(\ref{Z2system})$ starting at the regular-folds $T^-_v$ and $T^+_v$ intersect with $\Sigma$ respectively, and it is externally stable.
\end{itemize}
\item[{\rm(3)}] System $(\ref{Z2system})$ has at most one sliding cycle in $\mathcal{A}$, which is stable if it exists. Moreover, there is a sliding cycle if and only if one of the following conditions holds:
\begin{itemize}
\item[{\rm(3a)}] $\phi_1(\alpha)>0$ and $(-1-\xi_2(\sqrt{\phi_1(\alpha)},\alpha))\sqrt{\phi_1(\alpha)}<\phi_2(\alpha)<(1-\xi_2(\sqrt{\phi_1(\alpha)},\alpha))\sqrt{\phi_1(\alpha)}$, for which the sliding cycle arrives at a point of $S^+_2$ from $\Sigma^+$.
\item[{\rm(3b)}] $\phi_1(\alpha)>0$ and $\phi_2(\alpha)=(1-\xi_2(\sqrt{\phi_1(\alpha)},\alpha))\sqrt{\phi_1(\alpha)}$, for which the sliding cycle arrives at $T^+_{iv}$ from $\Sigma^+$.
\item[{\rm(3c)}] $\phi_1(\alpha)>0$ and $(1-\xi_2(\sqrt{\phi_1(\alpha)},\alpha))\sqrt{\phi_1(\alpha)}<\phi_2(\alpha)<(3+\xi_3(\sqrt{\phi_1(\alpha)},\alpha))\sqrt{\phi_1(\alpha)}$, for which the sliding cycle arrives at a point of $S^+_2$ from $\Sigma^-$.
\end{itemize}
\end{itemize}
\end{lm}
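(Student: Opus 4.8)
\textbf{Proof proposal for Lemma~\ref{eiwu343cd}.}
The plan is to reduce the detection of each type of cycle near $\Gamma_0$ to a scalar problem for a single crossing/tangency point near $(-a,0)$, using the $\mathbb{Z}_2$-symmetry, and then to read off the existence conditions from the position of that point relative to the local structure near $(\pm a,0)$ described in Lemma~\ref{sldynamcierwe3}. First I would assemble the transition maps. Since {\bf(H2)} gives $g^+(a,0;0)<0$, Lemma~\ref{dafsferwerc} provides the $C^k$ map $\sigma(x,0;\alpha)$ carrying $(x,0)$ near $(-a,0)$ along the forward flow of the upper subsystem to $\Sigma$ near $(a,0)$, together with the flatness forced by the cusp condition in {\bf(H1)$^\prime$}: $\sigma_x(-a,0;0)=0$, $\sigma_{xx}(-a,0;0)=g^+_x(-a,0;0)\lambda(0)/g^+(a,0;0)=0$, and $\sigma_{\alpha_i}(-a,0;0)=-\kappa_i/g^+(a,0;0)$. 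By the $\mathbb{Z}_2$-symmetry the analogous map near $(a,0)$ along the lower subsystem is $-\sigma(-\,\cdot\,,0;\alpha)$, so the first-return map to $\Sigma$ near $(-a,0)$ for orbits that cross (rather than slide) is $\mathcal{R}(x;\alpha):=-\sigma(-\sigma(x,0;\alpha),0;\alpha)$, whose $x$-derivative equals $\sigma_x(-\sigma(x,0;\alpha),0;\alpha)\,\sigma_x(x,0;\alpha)$ and is therefore $\mathcal{O}(\|\alpha\|+(x+a)^2)$ in each factor. Hence $\mathcal{R}$ is a strong contraction near $(-a,0;0)$, which yields at once that system $(\ref{Z2system})|_{\mathcal{A}}$ has at most one crossing cycle and that it is hyperbolic and stable; since any critical crossing or sliding cycle near $\Gamma_0$ is pinned down by passing through one of the distinguished tangent points/sliding endpoints of Lemma~\ref{sldynamcierwe3} (by symmetry, through a pair of them), and is traced by the flow, the corresponding ``at most one'' and stability statements for those cycles also follow. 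I would additionally construct, by the Implicit Function Theorem in the spirit of Lemmas~\ref{cnkrcdwerc}--\ref{Transipero}, the backward transition from the visible folds $T^\pm_v$ to $\Sigma$; expanding it in the smooth auxiliary variable $z=\sqrt{\phi_1(\alpha)}$ produces the $C^{k-1}$ correction functions $\xi_2(z,\alpha),\xi_3(z,\alpha)$ with $\xi_2(0,0)=\xi_3(0,0)=0$.

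Next I would define the second unfolding function by $\phi_2(\alpha):=\sigma(-\xi_1(\alpha),0;\alpha)-\xi_1(\alpha)$, the displacement between the image under the upper flow of the ``centre'' point $(-\xi_1(\alpha),0)$ near $(-a,0)$ (which is the cusp $T^-_c$ when $\phi_1(\alpha)=0$ and the midpoint of $S^-_2$ when $\phi_1(\alpha)>0$) and the centre $\xi_1(\alpha)$ near $(a,0)$. Using $\sigma_x(-a,0;0)=0$, $\sigma_{\alpha_i}(-a,0;0)=-\kappa_i/g^+(a,0;0)$ and the expansion of $\xi_1$ in (\ref{489jvdcsd}) gives $\phi_2(0)=0$ and $\phi_{2,\alpha_i}(0)=\sigma_{\alpha_i}(-a,0;0)-\xi_{1,\alpha_i}(0)=\eta_i$, i.e. (\ref{w234dsvsf}). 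Expanding $\sigma(-\xi_1(\alpha)+s,0;\alpha)$ in $s$, the landing point near $(a,0)$ of the orbit leaving the visible fold $T^-_v=(-\xi_1(\alpha)+\sqrt{\phi_1(\alpha)},0)$ becomes $\xi_1(\alpha)+\phi_2(\alpha)+\sqrt{\phi_1(\alpha)}\,\xi_2(\sqrt{\phi_1(\alpha)},\alpha)+\cdots$, which I would compare with the landmarks near $(a,0)$ produced by Lemma~\ref{sldynamcierwe3}: $T^+_v$ at $\xi_1-\sqrt{\phi_1}$, the interval $S^+_2=(\xi_1-\sqrt{\phi_1},\xi_1+\sqrt{\phi_1})$, and $T^+_{iv}$ at $\xi_1+\sqrt{\phi_1}$. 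This comparison is exactly what turns into the inequalities of the lemma once the further swing through $\Sigma^-$ (for landing points beyond $T^+_{iv}$) is taken into account via $-\sigma(-\,\cdot\,,0;\alpha)$ and the backward-transition from $T^-_v$.

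Then I would run the case analysis. If $\phi_1(\alpha)<0$, $I^\pm_2$ are entirely crossing, the contraction $\mathcal{R}$ has its unique fixed point in the crossing region, and $\mathcal{A}$ contains no tangent points or sliding segments: this gives (1a) and no cycles of the other two types. If $\phi_1(\alpha)=0$, the fixed point of $\mathcal{R}$ yields a crossing cycle unless its image hits the cusp $T^+_c$, which happens exactly when $\phi_2(\alpha)=0$; in that degenerate case the symmetric orbit through $T^-_c,T^+_c$ is the unique critical crossing cycle, stable since $\partial_x\mathcal{R}>0$, and the sign of $\phi_2(\alpha)$ together with $f^+(-a,0;0)>0$ decides whether the crossing cycle encloses the cusps --- this is (1b) and (2a). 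If $\phi_1(\alpha)>0$, I would follow the orbit leaving $T^-_v$: landing at $T^+_v$ occurs at $\phi_2=(-1-\xi_2(\sqrt{\phi_1},\alpha))\sqrt{\phi_1}$, giving the critical crossing cycle (2b); landing on $S^+_2$ occurs on the open interval $(-1-\xi_2)\sqrt{\phi_1}<\phi_2<(1-\xi_2)\sqrt{\phi_1}$, giving the sliding cycle (3a) that arrives on $S^+_2$ from $\Sigma^+$; landing at $T^+_{iv}$ is the boundary case (3b). Below this range the orbit crosses down and, by the contraction, closes up as the inner crossing cycle (1c); above it the orbit crosses into $\Sigma^-$, runs around the lower subsystem and re-meets $\Sigma$ near $(-a,0)$, and comparing that re-entry point with $S^-_2$ and with the $\Sigma$-intersection of the backward orbit from $T^-_v$ yields the sliding cycle (3c) for $(1-\xi_2)\sqrt{\phi_1}<\phi_2<(3+\xi_3(\sqrt{\phi_1},\alpha))\sqrt{\phi_1}$ (now arriving on $S^+_2$ from $\Sigma^-$), the critical crossing cycle (2c) at $\phi_2=(3+\xi_3)\sqrt{\phi_1}$, and the outer crossing cycle (1d) for $\phi_2>(3+\xi_3)\sqrt{\phi_1}$. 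Internal versus external stability in (2b), (2c) and the stability in (3a)--(3c) follow from the sign of $\partial_x\mathcal{R}$ and from the stability of $S^\pm_2$.

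The hard part will be the analysis for $\phi_1(\alpha)>0$. Three issues must be handled with care: (i) the width $\sqrt{\phi_1(\alpha)}$ of the sliding window is not smooth in $\alpha$, so the whole argument has to be carried out in the smooth variable $z=\sqrt{\phi_1(\alpha)}$ in order to extract the $C^{k-1}$ corrections $\xi_2,\xi_3$ and to keep the bifurcation curves $\phi_2=c\,z(1+\cdots)$ --- which are thus quadratically tangent to the $\beta_2$-axis --- under control; (ii) the return structure is genuinely piecewise (cross / slide on $S^\pm_2$ / pass the invisible fold $T^\pm_{iv}$ and swing through $\Sigma^-$), so one must keep track of which Filippov branch is active and, in particular, distinguish ``arriving at $S^+_2$ from $\Sigma^+$'' from ``arriving from $\Sigma^-$''; and (iii) one must verify the strict ordering $(-1-\xi_2)\sqrt{\phi_1}<(1-\xi_2)\sqrt{\phi_1}<(3+\xi_3)\sqrt{\phi_1}$ and the non-emptiness of the intermediate parameter regions, which requires uniform smallness estimates on all correction terms in a sufficiently small neighbourhood of $\alpha=0$.
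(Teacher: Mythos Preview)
Your proposal is correct and follows essentially the paper's strategy, but with two technical differences worth noting. First, the paper does not use the full return map $\mathcal{R}(x;\alpha)=-\sigma(-\sigma(x,0;\alpha),0;\alpha)$; instead it exploits the $\mathbb{Z}_2$-symmetry to work with the half-displacement $\mathcal{T}_1(x;\alpha)=\sigma(x,0;\alpha)+x$, whose zeros correspond to symmetric crossing cycles. Because $\partial_x\mathcal{T}_1=1+\sigma_x=1+\mathcal{O}(\|\alpha\|+(x+a))$ is close to $1$ (not close to $0$), $\mathcal{T}_1$ is strictly increasing, which makes the position comparisons with the landmarks $-\xi_1$, $\sqrt{\phi_1}-\xi_1$, $-\varsigma$ completely transparent: one simply evaluates $\mathcal{T}_1$ at each landmark and reads off the sign. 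Your contraction argument for $\mathcal{R}$ is also valid and yields the same $\phi_2$ (your definition $\sigma(-\xi_1,0;\alpha)-\xi_1$ coincides with the paper's $\mathcal{T}_1(-\xi_1;\alpha)$), but the monotone $\mathcal{T}_1$ route is shorter and avoids tracking the fixed point of $\mathcal{R}$ separately.

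Second, the paper constructs $\xi_2$ and $\xi_3$ explicitly via the Taylor formula with integral remainder rather than by an abstract IFT argument in $z=\sqrt{\phi_1}$: one has $\xi_2(z,\alpha)=\int_0^1\sigma_x(tz-\xi_1(\alpha),0;\alpha)\,dt$, and $\xi_3=\widetilde\xi_3-\xi_2$ where $\widetilde\xi_3$ comes from expanding the local backward involution $\mathcal{P}$ near the \emph{invisible} fold $T^-_{iv}$ (using $\mathcal{P}_x(T^-_{iv})=-1$) and evaluating at $T^-_v$ to obtain $-\varsigma(\alpha)=-(3+\widetilde\xi_3(\sqrt{\phi_1},\alpha))\sqrt{\phi_1}-\xi_1(\alpha)$. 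This is how the coefficient $3$ and the smoothness of $\xi_2,\xi_3$ in $(z,\alpha)$ are obtained cleanly, without having to argue separately that the expansion in $\sqrt{\phi_1}$ is well behaved.
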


\begin{proof}
For any sufficiently small annulus $\mathcal{A}$ of $\Gamma_0$, letting $\varepsilon>0$ be the constant such that $\mathcal{A}\cap\Sigma=\{(x,0):|x+a|<\varepsilon\}\cup\{(x,0):|x-a|<\varepsilon\}$, we can ensure $\varepsilon<\min\{\varepsilon_2,\varepsilon_4\}$, where $\varepsilon_2$ and $\varepsilon_4$ are given in Lemma~\ref{dafsferwerc} and Lemma~\ref{sldynamcierwe3} respectively. Then there exists a neighborhood $U\subset U_2\cap U_4$ of $\alpha=0$ such that
we still can define
\begin{equation}\label{cjt43}
\mathcal{T}_1(x;\alpha)=\sigma(x,0;\alpha)+x\qquad{\rm for}~~|x+a|<\varepsilon,~ \alpha\in U
\end{equation}
as in (\ref{T1map}), where $U_2$ and $U_4$ are given in Lemma~\ref{dafsferwerc} and Lemma~\ref{sldynamcierwe3} respectively. In the following, we always work in $U$ and allow it to be reduced if necessary.

Due to $f^+(-a,0;0)>0$, it follows from Lemma~\ref{sldynamcierwe3} and the $\mathbb{Z}_2$-symmetry of (\ref{Z2system}) that for $\alpha\in U$ the crossing cycles of system (\ref{Z2system}) in $\mathcal{A}$ are in one-to-one correspondence with the zeros of $\mathcal{T}_1(x;\alpha)$ in
$$
\mathcal{I}=\left\{
\begin{aligned}
&(-a-\varepsilon,-a+\varepsilon)\qquad &&{\rm if}~~\phi_1(\alpha)<0,\\
&(-a-\varepsilon,-a+\varepsilon)\setminus\{-\xi_1(\alpha)\}\qquad &&{\rm if}~~\phi_1(\alpha)=0,\\
&(-a-\varepsilon,-a+\varepsilon)\setminus[-\varsigma(\alpha),\sqrt{\phi_1(\alpha)}-\xi_1(\alpha)]\qquad &&{\rm if}~~\phi_1(\alpha)>0,
\end{aligned}
\right.
$$
where $\varsigma(\alpha)$ is the value such that the backward orbit of the upper subsystem starting at $T^-_v$ intersects with $\Sigma$ at $(-\varsigma(\alpha),0)$ and it will be determined later. On the other hand, by Lemma~\ref{dafsferwerc} and the Implicit Function Theorem $U$ can be chosen to ensure that for $\alpha\in U$ the map $\mathcal{T}_1$ has a unique zero $x=\varrho_2(\alpha)$ in $(-a-\varepsilon,-a+\varepsilon)$ as in the proof of Lemma~\ref{codimenonecycexste1}.
Hence, system (\ref{Z2system}) for $\alpha\in U$ has at most one crossing cycle in $\mathcal{A}$. Moreover, if there exists a crossing cycle, it is hyperbolic and stable due to $\partial\mathcal{T}_1(\varrho_2(\alpha);\alpha)/\partial x=1+\mathcal{O}(\|\alpha\|)>0$. That is, the first part of statement (1) holds.

To prove the rest part of statement (1), it is enough to determine the values of $\alpha\in U$ for which $\varrho_2(\alpha)\in\mathcal{I}$. Clearly, $\varrho_2(\alpha)\in\mathcal{I}$ in the case of $\phi_1(\alpha)<0$, i.e. condition (1a) is obtained.

Taking
\begin{equation}\label{cr334e}
\phi_2(\alpha):=\mathcal{T}_1(-\xi_1(\alpha);\alpha)
\end{equation}
for $\alpha\in U$, we can verify that $\phi_2(\alpha)$ is smooth and satisfies (\ref{w234dsvsf}) from Lemma~\ref{dafsferwerc}, (\ref{ew45fsd}), (\ref{489jvdcsd}) and (\ref{cjt43}). Then $\varrho_2(\alpha)\ne-\xi_1(\alpha)$ is equivalent to $\phi_2(\alpha)\ne0$ due to the uniqueness of the zeros of $\mathcal{T}_1$ in $(-a-\varepsilon,-a+\varepsilon)$, Thus, in the case of $\phi_1(\alpha)=0$, $\varrho_2(\alpha)\in\mathcal{I}$ if and only if $\phi_2(\alpha)\ne0$, i.e. condition (1b) is obtained.
Besides, since $\varepsilon$ and $U$ can be any small to ensure that $\mathcal{T}_1$ is strictly increasing in $(-a-\varepsilon,a+\varepsilon)$ with respect to $x$, $\varrho_2(\alpha)<-\xi_1(\alpha)$ (resp. $>-\xi_1(\alpha)$) is equivalent to $\phi_2(\alpha)>0$ (resp. $<0$). This, together with Lemma~\ref{sldynamcierwe3} and the $\mathbb{Z}_2$-symmetry of (\ref{Z2system}), concludes that the crossing cycle existing for condition (1b) encloses the regular-cusps $T^-_c$ and $T^+_c$ if $\phi_2(\alpha)>0$, while if $\phi_2(\alpha)<0$, it does not enclose them.

In the case of $\phi_1(\alpha)>0$, we claim that $\varrho_2(\alpha)\in(\sqrt{\phi_1(\alpha)}-\xi_1(\alpha),-a+\varepsilon)\subset\mathcal{I}$ is equivalent that there is a smooth function $\xi_2(z,\alpha)$ with $\xi_2(0,0)=0$ such that
\begin{equation}\label{94fdcds}
\phi_2(\alpha)<\left(-1-\xi_2(\sqrt{\phi_1(\alpha)},\alpha)\right)\sqrt{\phi_1(\alpha)},
\end{equation}
i.e. condition (1c) holds, and $\varrho_2(\alpha)\in(-a-\varepsilon, -\varsigma(\alpha))\subset\mathcal{I}$ is equivalent that there is a smooth function $\xi_3(z,\alpha)$ with $\xi_3(0,0)=0$ such that
\begin{equation}\label{94fdcdsdae}
\phi_2(\alpha)>\left(3+\xi_3(\sqrt{\phi_1(\alpha)},\alpha)\right)\sqrt{\phi_1(\alpha)},
\end{equation}
i.e. condition (1d) holds. In fact,
since $\mathcal{T}_1$ is strictly increasing in $(-a-\varepsilon,a+\varepsilon)$ with respect to $x$, $\varrho_2(\alpha)\in(\sqrt{\phi_1(\alpha)}-\xi_1(\alpha),-a+\varepsilon)$ and $\varrho_2(\alpha)\in(-a-\varepsilon, -\varsigma(\alpha))$ are equivalent to
\begin{equation}\label{cafadsfm}
\mathcal{T}_1\left(\sqrt{\phi_1(\alpha)}-\xi_1(\alpha);\alpha\right)<0
\end{equation}
and
\begin{equation}\label{cdsfm}
\mathcal{T}_1(-\varsigma(\alpha);\alpha)>0
\end{equation}
respectively. According to the Taylor formula with the integral form of the remainder \cite[Theorem 3.9]{JJCA},
\begin{equation}\label{cdsfafadsfm}
\begin{aligned}
\mathcal{T}_1\left(\sqrt{\phi_1(\alpha)}-\xi_1(\alpha);\alpha\right)&=\mathcal{T}_1(-\xi_1(\alpha);\alpha)+
\int^1_0\mathcal{T}_{1x}\left(t\sqrt{\phi_1(\alpha)}-\xi_1(\alpha);\alpha\right)dt\sqrt{\phi_1(\alpha)}\\
&=\mathcal{T}_1(-\xi_1(\alpha);\alpha)+
\left(1+\int^1_0\sigma_x\left(t\sqrt{\phi_1(\alpha)}-\xi_1(\alpha),0;\alpha\right)dt\right)\sqrt{\phi_1(\alpha)}\\
&=\phi_2(\alpha)+\left(1+\xi_2(\sqrt{\phi_1(\alpha)},\alpha)\right)\sqrt{\phi_1(\alpha)},
\end{aligned}
\end{equation}
where
$\xi_2(z,\alpha):=\int^1_0\sigma_x(tz-\xi_1(\alpha),0;\alpha)dt.$
By Lemma~\ref{dafsferwerc} and (\ref{489jvdcsd}), $\xi_2(0,0)=\sigma_x(-a,0;0)=0$ and $\xi_2(z,\alpha)$ is smooth because the integrand is smooth under the assumption on the smoothness of the upper subsystem. Thus $\varrho_2(\alpha)\in(\sqrt{\phi_1(\alpha)}-\xi_1(\alpha),-a+\varepsilon)$ is equivalent to (\ref{94fdcds}) by (\ref{cafadsfm}) and (\ref{cdsfafadsfm}).

Letting $\mathcal{P}(x;\alpha)$ be the transition map from $\{(x,0):-\sqrt{\phi_1(\alpha)}-\xi_1(\alpha)\le x\le\sqrt{\phi_1(\alpha)}-\xi_1(\alpha)\}$ to $\{(x,0):-\varsigma(\alpha)\le x\le-\sqrt{\phi_1(\alpha)}-\xi_1(\alpha)\}$ defined by the backward orbits of the upper subsystem, we get
$$
\begin{aligned}
\mathcal{P}(x;\alpha)=&~-\sqrt{\phi_1(\alpha)}-\xi_1(\alpha)-\left(x+\sqrt{\phi_1(\alpha)}+\xi_1(\alpha)\right)\\
&~+\int^1_0\mathcal{P}_{xx}\left(-\sqrt{\phi_1(\alpha)}-\xi_1(\alpha)+t(x+\sqrt{\phi_1(\alpha)}+\xi_1(\alpha));\alpha\right)
(1-t)dt\left(x+\sqrt{\phi_1(\alpha)}+\xi_1(\alpha)\right)^2
\end{aligned}
$$
by the Taylor formula with the integral form of the remainder and the fact $\mathcal{P}_x(-\sqrt{\phi_1(\alpha)}-\xi(\alpha);\alpha)=-1$ from \cite[p.236]{AFF}. This concludes that
\begin{equation}\label{382cef}
\begin{aligned}
-\varsigma(\alpha)&=\mathcal{P}\left(\sqrt{\phi_1(\alpha)}-\xi_1(\alpha);\alpha\right)\\
&=-3\sqrt{\phi_1(\alpha)}-\xi_1(\alpha)+4\int^1_0\mathcal{P}_{xx}\left(-\sqrt{\phi_1(\alpha)}-\xi_1(\alpha)+2t\sqrt{\phi_1(\alpha)};\alpha\right)
(1-t)dt\left(\sqrt{\phi_1(\alpha)}\right)^2\\
&=-\left(3+\widetilde\xi_3(\sqrt{\phi_1(\alpha)},\alpha)\right)\sqrt{\phi_1(\alpha)}-\xi_1(\alpha),
\end{aligned}
\end{equation}
where
$$\widetilde\xi_3(z,\alpha)=-4z\int^1_0\mathcal{P}_{xx}(-z-\xi_1(\alpha)+2tz;\alpha)(1-t)dt.$$
Clearly, $\widetilde\xi_3(0,0)=0$ and $\widetilde\xi_3(z,\alpha)$ is smooth because the integrand is smooth under the assumption on the smoothness of the upper subsystem. Note that
\begin{equation}\label{ciwrecdsf}
\begin{aligned}
\mathcal{T}_1(-\varsigma(\alpha);\alpha)=&~\sigma\left(\sqrt{\phi_1(\alpha)}-\xi_1(\alpha),0;\alpha\right)-\varsigma(\alpha)\\
=&~\mathcal{T}_1\left(\sqrt{\phi_1(\alpha)}-\xi_1(\alpha),0;\alpha\right)-\sqrt{\phi_1(\alpha)}+\xi_1(\alpha)-\varsigma(\alpha)\\
=&~\phi_2(\alpha)-\left(3+\xi_3(\sqrt{\phi_1(\alpha)},\alpha)\right)\sqrt{\phi_1(\alpha)}
\end{aligned}
\end{equation}
by (\ref{cjt43}), (\ref{cdsfafadsfm}), (\ref{382cef}) and $\sigma(-\varsigma(\alpha),0;\alpha)=\sigma(\sqrt{\phi_1(\alpha)}-\xi_1(\alpha),0;\alpha)$, where $\xi_3(z,\alpha):=\widetilde\xi_3(z,\alpha)-\xi_2(z,\alpha)$.
Thus $\varrho_2(\alpha)\in(-a-\varepsilon, -\varsigma(\alpha))$ is equivalent to (\ref{94fdcdsdae}) by (\ref{cdsfm}) and (\ref{ciwrecdsf}). Consequently, this claim is proved, and the crossing cycle existing for condition (1c) does not enclose any of the regular-folds $T^\pm_{iv}$ and $T^\pm_{v}$, while the crossing cycle existing for condition (1d) encloses all of them. The last four paragraphs yield the second part of statement (1).

Due to $f^+(-a,0;0)>0$, it follows from Lemma~\ref{sldynamcierwe3} that a necessary condition for system (\ref{Z2system}) to have
a critical crossing cycle in $\mathcal{A}$ is $\phi_1(\alpha)\ge0$. In addition, according to the $\mathbb{Z}_2$-symmetry of (\ref{Z2system}),
a critical crossing cycle of system (\ref{Z2system}) in $\mathcal{A}$ must cross $\Sigma$ at the regular-cusps $T^-_c$ and $T^+_c$ if $\phi_1(\alpha)=0$, while if $\phi_1(\alpha)>0$, it crosses $\Sigma$ either at the regular-folds $T^-_v$ and $T^+_v$, or at $(-\varsigma(\alpha),0)$ and $(\varsigma(\alpha),0)$.
Hence, system (\ref{Z2system}) for $\alpha\in U$ has at most one critical crossing cycle in $\mathcal{A}$ and there is a critical crossing cycle if and only if
either $\phi_1(\alpha)=0$ and $\mathcal{T}_1(-\xi_1(\alpha);\alpha)=0$, or $\phi_1(\alpha)>0$ and $\mathcal{T}_1(\sqrt{\phi_1(\alpha)}-\xi_1(\alpha);\alpha)=0$, or $\phi_1(\alpha)>0$ and $\mathcal{T}_1(-\varsigma(\alpha);\alpha)=0$, which conclude respectively the conditions (2a), (2b) and (2c) by (\ref{cr334e}), (\ref{cdsfafadsfm}) and (\ref{ciwrecdsf}). Combining the dynamics on $\Sigma$ obtained in Lemma~\ref{sldynamcierwe3} and the fact $\partial\mathcal{T}_1(x_0;\alpha)/\partial x=1+\mathcal{O}(\|\alpha\|)>0$ for $x_0\in\{-\xi_1(\alpha), \sqrt{\phi_1(\alpha)}-\xi_1(\alpha), -\varsigma(\alpha)\}$, we obtain the stability of the critical crossing cycle as stated in (2a), (2b) and (2c).

Finally, due to $f^+(-a,0;0)>0$, a sliding cycle of system (\ref{Z2system}) in $\mathcal{A}$ must slide on $S^-_2$ and enter into $\Sigma^+$ at $T^-_v$ until it reaches $\Sigma$ again at a point in $S^+_2\cup T^+_{iv}$ by the sliding dynamics obtained in Lemma~\ref{sldynamcierwe3} and the $\mathbb{Z}_2$-symmetry of (\ref{Z2system}).
Hence, system (\ref{Z2system}) for $\alpha\in U$ has at most one sliding cycle in $\mathcal{A}$, which is stable from the stability of its sliding segments, and a necessary and sufficient condition for system (\ref{Z2system}) to have a sliding cycle in $\mathcal{A}$ is
\begin{equation}\label{38c2fjd}
\phi_1(\alpha)>0,\qquad -\sqrt{\phi_1(\alpha)}+\xi_1(\alpha)<\sigma\left(\sqrt{\phi_1(\alpha)}-\xi_1(\alpha),0;\alpha\right)<\varsigma(\alpha).
\end{equation}
In addition, if
\begin{equation}\label{45dssad}
\sigma\left(\sqrt{\phi_1(\alpha)}-\xi_1(\alpha);\alpha\right)-\left(\sqrt{\phi_1(\alpha)}+\xi_1(\alpha)\right)<0~ ({\rm resp}.~ =0, >0),
\end{equation}
the sliding cycle arrives at a point of $S^+_2$ from $\Sigma^+$ (resp. at $T^+_{iv}$ from $\Sigma^+$, at a point of $S^+_2$ from $\Sigma^-$).
Note that (\ref{38c2fjd}) is equivalent to
\begin{equation}\label{4dad5dssad}
\phi_1(\alpha)>0,\qquad \mathcal{T}_1\left(\sqrt{\phi_1(\alpha)}-\xi_1(\alpha);\alpha\right)>0,\qquad\mathcal{T}_1(-\varsigma(\alpha);\alpha)<0,
\end{equation}
and (\ref{45dssad}) is equivalent to
\begin{equation}\label{4sf5dssad}
\mathcal{T}_1\left(\sqrt{\phi_1(\alpha)}-\xi_1(\alpha);\alpha\right)-2\sqrt{\phi_1(\alpha)}<0~~({\rm resp}. =0, >0)
\end{equation}
by (\ref{cjt43}) and (\ref{ciwrecdsf}). Thus, recalling (\ref{cdsfafadsfm}) and (\ref{ciwrecdsf}), we get conditions (3a), (3b) and (3c) from (\ref{4dad5dssad}) and (\ref{4sf5dssad}).
\end{proof}

\begin{proof}[{\bf Proof of Theorem~\ref{codim-2-cusp-bifur}}]
We only prove Theorem~\ref{codim-2-cusp-bifur} for $f^+(-a,0;0)>0$ because of similarity. For any sufficiently small annulus $\mathcal{A}$ of $\Gamma_0$, we take $U$ as the neighborhood of $\alpha=0$ given in Lemma~\ref{eiwu343cd}, $\phi_1(\alpha)$ and $\phi_2(\alpha)$ for $\alpha\in U$ as the functions given in (\ref{489jvdcsd}) and (\ref{w234dsvsf}) respectively.
If $(\zeta_1,\zeta_2,\cdots,\zeta_m)$ and $(\eta_1,\eta_2,\cdots,\eta_m)$ are linearly independent, there are $j_1,j_2\in\{1,2,\cdots,m\}$ such that
\begin{equation}\label{cfw34wefw}
\zeta_{j_1}\eta_{j_2}-\eta_{j_1}\zeta_{j_2}\ne0.
\end{equation}
Let
\begin{equation}\label{cdsn34afr}
\phi_i(\alpha):=\alpha_{j_i},\quad i=3,4,\cdots,m
\end{equation}
such that $\alpha_{j_i}\ne\alpha_{j_1}$, $\alpha_{j_i}\ne\alpha_{j_2}$ for all $i=3,4,\cdots,m$ and $\alpha_{j_k}\ne\alpha_{j_l}$ for $k\ne l$. Then the Jacobian matrix of
\begin{equation}\label{cdsn3465}
\beta=(\beta_1,\beta_2,\cdots,\beta_m)=(\phi_1(\alpha),\phi_2(\alpha),\cdots,\phi_m(\alpha))
\end{equation}
at $\alpha=0$ is nonsingular from (\ref{489jvdcsd}), (\ref{w234dsvsf}), (\ref{cfw34wefw}) and (\ref{cdsn34afr}). Thus (\ref{cdsn3465}) is a diffeomorphism from $U$ to its range $V$, where $U$ can be reduced if necessary.
Finally, taking
\begin{eqnarray*}
\begin{aligned}
GS:&=\Big\{\beta\in V:~\beta_1>0,\beta_2=\left(3+\xi_3(\sqrt{\beta_1},\alpha^{-1}(\beta))\right)\sqrt{\beta_1}\Big\},\\
SS:&=\Big\{\beta\in V:~\beta_1>0,\beta_2=\left(1-\xi_2(\sqrt{\beta_1},\alpha^{-1}(\beta))\right)\sqrt{\beta_1}\Big\},\\
CS:&=\Big\{\beta\in V:~\beta_1>0,\beta_2=\left(-1-\xi_2(\sqrt{\beta_1},\alpha^{-1}(\beta))\right)\sqrt{\beta_1}\Big\},\\
\end{aligned}
\end{eqnarray*}
where $\alpha^{-1}(\beta)$ denotes the inverse of (\ref{cdsn3465}), and combining the dynamics on $\Sigma$ and the information of various cycles given in Lemma~\ref{sldynamcierwe3} and Lemma~\ref{eiwu343cd} respectively, we get that for any $(\beta_3^*,\beta_4^*,\cdots,\beta_m^*)\in V^*$ the bifurcation diagram of system $\left.(\ref{Z2system})\right|_{\mathcal{A}}$ on the hyperplane $(\beta_3,\beta_4,\cdots,\beta_m)=(\beta_3^*,\beta_4^*,\cdots,\beta_m^*)$ is the one shown in Figure~\ref{codim-2bifurdia2wca} when $f^+(-a,0;0)>0$.
\end{proof}

\section{Proof of Theorem~\ref{codim-2-fold-bifur1}}
\setcounter{equation}{0}
\setcounter{lm}{0}
\setcounter{thm}{0}
\setcounter{rmk}{0}
\setcounter{df}{0}
\setcounter{cor}{0}

We also only prove Theorem~\ref{codim-2-fold-bifur1} for $f^+(-a,0;0)>0$, as the case of $f^+(-a,0;0)<0$ can be treated similarly by employing the methodology of this section. We start by studying the dynamics on $\Sigma$.

\begin{lm}\label{dvn23fs}
Under the assumptions {\bf(H1)}, {\bf(H2)$^\prime$} and $f^+(-a,0;0)>0$, for any sufficiently small $\varepsilon_5>0$ there exists a neighborhood $U_5$ of $\alpha=0$ and two smooth functions
\begin{equation}\label{23vdf24sff}
\vartheta^-_1(\alpha)=a+\sum_{i=1}^m\frac{g^+_{\alpha_i}(-a,0;0)}{g^+_x(-a,0;0)}\alpha_i+\mathcal{O}(\|\alpha\|^2),\qquad
\vartheta^+_1(\alpha)=a-\sum_{i=1}^m\frac{g^+_{\alpha_i}(a,0;0)}{g^+_x(a,0;0)}\alpha_i+\mathcal{O}(\|\alpha\|^2)
\end{equation}
defined in $U_5$ such that $T^\pm_u:=(\pm\vartheta^\pm_1(\alpha),0)\in I^\pm_3:=\{(x,0):|x\mp a|<\varepsilon_5\}$ is a visible fold of the upper subsystem of $(\ref{Z2system})$ with $f^+(\pm\vartheta^\pm_1(\alpha),0;\alpha)>0$ and the following statements hold for $\alpha\in U_5$, letting
\begin{equation}\label{qq485dscs}
\varphi_1(\alpha):=-\vartheta^-_1(\alpha)+\vartheta^+_1(\alpha)=\sum_{i=1}^m\mu_i\alpha_i+\mathcal{O}(\|\alpha\|^2),
\end{equation}
and
$$
\begin{aligned}
C^-_{d}:=&~\left\{(x,0):-a-\varepsilon_5<x<\min\{-\vartheta^+_1(\alpha),-\vartheta^-_1(\alpha)\}\right\},\\
S^-_{3}:=&~\left\{(x,0):\min\{-\vartheta^+_1(\alpha),-\vartheta^-_1(\alpha)\}<x<\max\{-\vartheta^+_1(\alpha),-\vartheta^-_1(\alpha)\}\right\},\\
C^-_{u}:=&~\left\{(x,0):\max\{-\vartheta^+_1(\alpha),-\vartheta^-_1(\alpha)\}<x<-a+\varepsilon_5\right\},\\
C^+_{d}:=&~\left\{(x,0):a-\varepsilon_5<x<\min\{\vartheta^+_1(\alpha),\vartheta^-_1(\alpha)\}\right\},\\
S^+_{3}:=&~\left\{(x,0):\min\{\vartheta^+_1(\alpha),\vartheta^-_1(\alpha)\}<x<\max\{\vartheta^+_1(\alpha),\vartheta^-_1(\alpha)\}\right\},\\
C^+_{u}:=&~\left\{(x,0):\max\{\vartheta^+_1(\alpha),\vartheta^-_1(\alpha)\}<x<a+\varepsilon_5\right\}.
\end{aligned}
$$
\begin{itemize}
\item[{\rm(1)}] If $\varphi_1(\alpha)=0$, $T^-_u$ and $T^-_l:=(-\vartheta^+_1(\alpha),0)$, namely the $\mathbb{Z}_2$-symmetric counterpart
 of $T^+_u$, coincide to be a fold-fold of system $(\ref{Z2system})$, which splits $I^-_3$ into a downward crossing segment $C^-_{d}$ and an upward crossing segment $C^-_{u}$. If $\varphi_1(\alpha)\ne0$, $T^-_u$ and $T^-_l$ are two regular-folds of system $(\ref{Z2system})$ and they split $I^-_3$ into a downward crossing segment $C^-_{d}$, a stable $($resp. an unstable$)$ sliding segment $S^-_{3}$ and an upward crossing segment $C^-_{u}$ for $\varphi_1(\alpha)>0$ $(resp.~<0)$. Moreover, there is a unique pseudo-equilibrium on $S^-_{3}$, which is a
 pseudo-saddle and lies at $E^-_p:=(-\varpi(\alpha),0)$ with
 \begin{equation}\label{943ce23dew3dsf}
 \varpi(\alpha)=a+\sum^m_{i=1}\frac{g^+_{\alpha_i}(-a,0;0)f^+(a,0;0)-g^+_{\alpha_i}(a,0;0)f^+(-a,0;0)}{g^+_x(-a,0;0)f^+(a,0;0)+g^+_x(a,0;0)f^+(-a,0;0)}\alpha_i
    +\mathcal{O}(\|\alpha\|^2).
 \end{equation}
\item[{\rm(2)}] The dynamics exhibited on $I^+_3$ and $I^-_3$ are $\mathbb{Z}_2$-symmetric with respect to $O$. Hence, if $\varphi_1(\alpha)=0$, $T^+_u$ and $T^+_l:=(\vartheta^-_1(\alpha),0)$, namely the $\mathbb{Z}_2$-symmetric counterpart of $T^-_u$, coincide to be a fold-fold of system $(\ref{Z2system})$, which splits $I^+_3$ into a downward crossing segment $C^+_{d}$ and an upward crossing segment $C^+_{u}$. If $\varphi_1(\alpha)\ne0$, $T^+_u$ and $T^+_l$ are two regular-folds of system $(\ref{Z2system})$ and they split $I^+_3$ into a downward crossing segment $C^+_{d}$, a stable $($resp. an unstable$)$ sliding segment $S^+_{3}$ and an upward crossing segment $C^+_{u}$ for $\varphi_1(\alpha)>0$ $(resp.~<0)$. Moreover, there is a unique pseudo-equilibrium on $S^+_{3}$, which is a
 pseudo-saddle and lies at $E^+_p:=(\varpi(\alpha),0)$.
\end{itemize}
\end{lm}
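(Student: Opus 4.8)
\emph{Proof plan.} We treat only $f^+(-a,0;0)>0$ (hence $f^+(a,0;0)>0$ by {\bf(H2)$^\prime$}), the opposite case being analogous. The scheme mirrors that of Lemma~\ref{codimenoneslidingdy}, with an extra analysis of the pseudo-equilibrium. First I would locate the two folds: since $g^+(-a,0;0)=0$ with $g^+_x(-a,0;0)\neq0$ by {\bf(H1)} and $g^+(a,0;0)=0$ with $g^+_x(a,0;0)\neq0$ by {\bf(H2)$^\prime$}, the Implicit Function Theorem gives a neighborhood $U_5$ of $\alpha=0$ and smooth $\vartheta^-_1(\alpha),\vartheta^+_1(\alpha)$ with $\vartheta^\pm_1(0)=a$ solving $g^+(-\vartheta^-_1(\alpha),0;\alpha)=g^+(\vartheta^+_1(\alpha),0;\alpha)=0$; differentiating these identities at $\alpha=0$ yields the expansions in \eqref{23vdf24sff}, and then that of $\varphi_1$ in \eqref{qq485dscs} via \eqref{4scsrve4}. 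Shrinking $U_5,\varepsilon_5$ and using the sign-preserving property of continuous functions one arranges $f^+>0$ and $g^+_x>0$ throughout $I^-_3$ and $I^+_3$; since $(Z^+)^2h=f^+g^+_x$ at a tangency of the upper subsystem (recall $h=y$), the points $T^\pm_u=(\pm\vartheta^\pm_1(\alpha),0)$ are visible folds of the upper subsystem with $f^+(\pm\vartheta^\pm_1(\alpha),0;\alpha)>0$, and by the $\mathbb{Z}_2$-symmetry $T^-_l=(-\vartheta^+_1(\alpha),0)$, the symmetric image of $T^+_u$, is a visible fold of the lower subsystem; moreover on $I^-_3$ the upper (resp.\ lower) subsystem is transversal to $\Sigma$ off $T^-_u$ (resp.\ $T^-_l$).

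Next I would read off the structure of $I^-_3$ from the signs of $Z^+h=g^+(x,0;\alpha)$ and, by \eqref{Z2system}, $Z^-h=-g^+(-x,0;\alpha)$: the first passes from negative to positive across $-\vartheta^-_1(\alpha)$ and the second across $-\vartheta^+_1(\alpha)$, so a case distinction on the sign of $\varphi_1(\alpha)=-\vartheta^-_1(\alpha)+\vartheta^+_1(\alpha)$ --- the same ``standard analysis with the related notions'' used in Lemma~\ref{codimenoneslidingdy} --- produces the three claimed pictures: for $\varphi_1(\alpha)=0$ the two folds merge into a visible--visible fold--fold and $I^-_3=C^-_d\cup\{T^-_u\}\cup C^-_u$ with both fields crossing downward on $C^-_d$ and upward on $C^-_u$; for $\varphi_1(\alpha)>0$ (resp.\ $<0$) the folds are disjoint, with $Z^+h\,Z^-h<0$ and $Z^+h<0<Z^-h$ (resp.\ $Z^+h>0>Z^-h$) between them, so $S^-_3$ is a stable (resp.\ unstable) sliding segment flanked by the crossing segments $C^-_d,C^-_u$.

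The substantive step is the pseudo-equilibrium. Inserting the $\mathbb{Z}_2$-relations $f^-(x,0;\alpha)=-f^+(-x,0;\alpha)$, $g^-(x,0;\alpha)=-g^+(-x,0;\alpha)$ into \eqref{generalslidd}, the sliding field on $\Sigma$ has first component
$$
V_s(x;\alpha)=\frac{g^+(-x,0;\alpha)f^+(x,0;\alpha)-g^+(x,0;\alpha)f^+(-x,0;\alpha)}{g^+(x,0;\alpha)+g^+(-x,0;\alpha)},
$$
so pseudo-equilibria in $S^-_3$ are the zeros there of $N(x;\alpha):=g^+(-x,0;\alpha)f^+(x,0;\alpha)-g^+(x,0;\alpha)f^+(-x,0;\alpha)$. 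Because $g^+(\pm a,0;0)=0$ we get $N(-a,0;0)=0$ and $N_x(-a,0;0)=-g^+_x(a,0;0)f^+(-a,0;0)-g^+_x(-a,0;0)f^+(a,0;0)<0$, so the Implicit Function Theorem yields a unique zero $x=-\varpi(\alpha)$ near $-a$ with $\varpi(0)=a$; differentiating $N(-\varpi(\alpha);\alpha)\equiv0$ at $\alpha=0$ gives \eqref{943ce23dew3dsf}. To place $E^-_p=(-\varpi(\alpha),0)$ inside $S^-_3$ when $\varphi_1(\alpha)\neq0$, I would evaluate $N$ at the two fold abscissae: using the defining identities one gets $N(-\vartheta^-_1(\alpha);\alpha)=g^+(\vartheta^-_1(\alpha),0;\alpha)f^+(-\vartheta^-_1(\alpha),0;\alpha)$ and $N(-\vartheta^+_1(\alpha);\alpha)=-g^+(-\vartheta^+_1(\alpha),0;\alpha)f^+(\vartheta^+_1(\alpha),0;\alpha)$, and the monotonicity of $g^+$ near $\pm a$ together with the sign of $\varphi_1(\alpha)$ shows these two values have opposite signs; by the intermediate value theorem $N$ vanishes strictly between $-\vartheta^+_1(\alpha)$ and $-\vartheta^-_1(\alpha)$, and by uniqueness this zero is $-\varpi(\alpha)$. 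Finally, since $N$ vanishes at $E^-_p$, $V_{s,x}(-\varpi(\alpha);\alpha)=N_x(-\varpi(\alpha);\alpha)/\bigl(Z^+h-Z^-h\bigr)$, whose numerator is negative for small $\alpha$ and whose denominator is negative on the stable $S^-_3$ ($\varphi_1>0$) and positive on the unstable one ($\varphi_1<0$); in either case the transverse stability of $S^-_3$ and the tangential stability of the one-dimensional sliding flow at $E^-_p$ are opposite, so $E^-_p$ is a pseudo-saddle. Statement (2) then follows from statement (1) by the $\mathbb{Z}_2$-symmetry $(x,y)\mapsto(-x,-y)$, which sends $I^-_3,T^-_u,T^-_l,E^-_p$ to $I^+_3,T^+_l,T^+_u,E^+_p$ and preserves the crossing/sliding classification.

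The routine part is the fold bookkeeping of the first two paragraphs (it is the argument of Lemma~\ref{codimenoneslidingdy}). I expect the main obstacle to be the third step: one must guarantee that the globally defined zero $-\varpi(\alpha)$ of $N$ actually falls inside the sliding window $S^-_3$, whose length is only $\mathcal{O}(|\varphi_1(\alpha)|)$ and collapses as $\alpha\to0$, uniformly over all small $\varphi_1(\alpha)\neq0$ of either sign, and one must extract the saddle type from the joint sign of $N_x$ and $Z^+h-Z^-h$ while keeping the $\mathbb{Z}_2$-relations between $f^\pm$ and $g^\pm$ straight.
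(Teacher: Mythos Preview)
Your proposal is correct and follows essentially the same route as the paper: Implicit Function Theorem for the two fold curves $\vartheta^\pm_1$, sign preservation of $f^+$ and $g^+_x$ on $I^\pm_3$, the standard crossing/sliding trichotomy from the signs of $Z^\pm h$, opposite signs of the sliding numerator at the two fold abscissae plus IFT/monotonicity for the unique pseudo-equilibrium, and $\mathbb{Z}_2$-symmetry for statement~(2). The only cosmetic difference is that the paper verifies the pseudo-saddle type by evaluating $\dot x$ at the two fold endpoints ($\dot x|_{T^-_u}=f^+(-\vartheta^-_1,0;\alpha)>0$ and $\dot x|_{T^-_l}=-f^+(\vartheta^+_1,0;\alpha)<0$) rather than via the sign of $V_{s,x}$ at $E^-_p$ as you do; both arguments are equivalent and your concern about placing $-\varpi(\alpha)$ inside the shrinking window $S^-_3$ is handled exactly as you propose.
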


\begin{proof}
For any sufficiently small $\varepsilon_5>0$, by the Implicit Function Theorem there is a neighborhood $U_5$ of $\alpha=0$ and a smooth function $\vartheta^-_1(\alpha)$ (resp. $\vartheta^+_1(\alpha)$) defined in $U_5$ and satisfying (\ref{23vdf24sff}) such that $x=-\vartheta^-_1(\alpha)$ (resp. $x=\vartheta^+_1(\alpha)$) is the unique zero of $g^+(x,0;\alpha)=0$ in $|x+a|<\varepsilon_5$ (resp. $|x-a|<\varepsilon_5$) for $\alpha\in U_5$. Moreover, according to {\bf(H1)}, {\bf(H2)$^\prime$}, $f^+(-a,0;0)>0$ and the sign-preserving property of continuous functions, $U_5$ can be chosen to ensure
\begin{equation}\label{ioru34fds}
f^+(x,0;\alpha)>0, \qquad g^+_{x}(x,0;\alpha)>0\qquad {\rm for}~~(x,0)\in I^-_3\cup I^+_3~~{\rm and}~~\alpha\in U_5,
\end{equation}
So $T^-_u\in I^-_3$ (resp. $T^+_u\in I^+_3$) is a visible fold of the upper subsystem of (\ref{Z2system}) satisfying $f^+(-\vartheta^-_1(\alpha),0;\alpha)>0$ (resp. $f^+(\vartheta^+_1(\alpha),0;\alpha)>0$) for $\alpha\in U_5$, i.e. the first part of Lemma~\ref{dvn23fs} holds.

Due to the $\mathbb{Z}_2$-symmetry, $T^-_l\in I^-_3$ is a visible fold of the lower subsystem of (\ref{Z2system}) satisfying $f^+(\vartheta^+_1(\alpha),0;\alpha)>0$ for $\alpha\in U_5$.
Thus $T^-_u$ and $T^-_l$ coincide to be a fold-fold that splits $I^-_3$ into $C^-_{d}$ and $C^-_{u}$ if $\varphi_1(\alpha)=0$, while if $\varphi_1(\alpha)\ne0$, they are two regular-folds and split $I^-_3$ into $C^-_{d}$, $S^-_{3}$ and $C^-_{u}$. In this case, a direct verification concludes that
$C^-_{d}$ is a downward crossing segment, $C^-_{u}$ is an upward crossing segment and $S^-_{3}$ is a stable $($resp. an unstable$)$ sliding segment for $\varphi_1(\alpha)>0$ (resp. $<0$).

The remaining part is to analyze the sliding dynamics of system (\ref{Z2system}) existing for $\varphi_1(\alpha)\ne0$. From (\ref{generalslidd}), the sliding system of (\ref{Z2system}) is given by
$$
(\dot x,\dot y)=\left(-\frac{f^s(x;\alpha)}{g^+(x,0;\alpha)+g^+(-x,0;\alpha)},0\right)
$$
for $(x,0)\in \overline{S^-_{3}\cup S^+_{3}}$ and $\alpha\in U_5$, where
$
f^s(x;\alpha):=g^+(x,0;\alpha)f^+(-x,0;\alpha)-g^+(-x,0;\alpha)f^+(x,0;\alpha).
$
Due to $g^+(-\vartheta^-_1(\alpha),0;\alpha)=0$, $g^+(\vartheta^+_1(\alpha),0;\alpha)=0$ and (\ref{ioru34fds}), we get
$f^s(-\vartheta^-_1(\alpha);\alpha)>0>f^s(-\vartheta^+_1(\alpha);\alpha)$ for $\varphi_1(\alpha)>0$ and $f^s(-\vartheta^-_1(\alpha);\alpha)<0<f^s(-\vartheta^+_1(\alpha);\alpha)$ for
$\varphi_1(\alpha)<0$. This means that $f^s(x;\alpha)$ must have a zero in the interval $(\min\{-\vartheta^-_1(\alpha),-\vartheta^+_1(\alpha)\},\max\{-\vartheta^-_1(\alpha),-\vartheta^+_1(\alpha)\})$.
Besides, since
$f^s_x(-a;0)=g^+_x(-a,0;0)f^+(a,0;0)+g^+_x(a,0;0)f^+(-a,0;0)>0$ from {\bf(H1)} and {\bf(H2)$^\prime$}, $U_5$ can be chosen such that $f^s_x(x;\alpha)>0$ for
$x\in(\min\{-\vartheta^-_1(\alpha),-\vartheta^+_1(\alpha)\},\max\{-\vartheta^-_1(\alpha),-\vartheta^+_1(\alpha)\})$ and $\alpha\in U_5$. Thus $f^s(x;\alpha)$ exactly has one zero in the interval $(\min\{-\vartheta^-_1(\alpha),-\vartheta^+_1(\alpha)\},\max\{-\vartheta^-_1(\alpha),-\vartheta^+_1(\alpha)\})$, i.e. there is a unique pseudo-equilibrium in $S^-_{3}$. Since $f^s(-a;0)=0$ and $f^s_x(-a;0)>0$, the pseudo-equilibrium lies at $E^-_p$ by the Implicit Function Theorem. Furthermore, recalling the stability of $S^-_{3}$, we get that $E^-_p$ is a pseudo-saddle due to $\dot x|_{(-\vartheta^-_1(\alpha),0)}=f^+(-\vartheta^-_1(\alpha),0;\alpha)>0$ and $\dot x|_{(-\vartheta^+_1(\alpha),0)}=-f^+(\vartheta^+_1(\alpha),0;\alpha)<0$. The proof of statement (1) is completed.
Finally, using the $\mathbb{Z}_2$-symmetry of system (\ref{Z2system}), we get statement (2) from statement (1) directly.
\end{proof}

Considering the map $\sigma^+(x,y;\alpha)$
(resp. $\sigma^-(x,y;\alpha)$) given in Lemma~\ref{cnkrcdwerc} for $\|(x+a,y)\|<\varepsilon_1$ (resp. $\|(x-a,y)\|<\varepsilon_1$) and $\alpha\in U_1$,
we define
\begin{equation}\label{95vvcs}
\mathcal{T}_2(x;\alpha):=\sigma^+(x,0;\alpha)-\sigma^-(-x,0;\alpha)
\end{equation}
for $|x+a|<\varepsilon_1$ and $\alpha\in U_1$. As will be seen later, the zeros of $\mathcal{T}_2$ are closely related to the crossing cycles and critical crossing cycles bifurcating from $\Gamma_0$. Thus we next study the zeros of $\mathcal{T}_2$.

\begin{lm}\label{458vsvsdf}
Consider the map $\mathcal{T}_2(x;\alpha)$ for $|x+a|<\varepsilon_1$ and $\alpha\in U_1$. Under the assumption of Theorem~\ref{codim-2-fold-bifur1}, for any sufficiently small $\varepsilon_6\in(0,\varepsilon_1)$ there exists a neighborhood $U_6\subset U_1$ of $\alpha=0$ and smooth functions
\begin{equation}\label{ew8csdr}
\begin{aligned}
\varphi_2(\alpha)&=\sum_{i=1}^m\frac{2}{\Delta}\kappa_i\alpha_i+\mathcal{O}(\|\alpha\|^2),\\
\vartheta_2(\alpha)&=a+\sum_{i=1}^m\frac{g^+_{\alpha_i}(-a,0;0)\lambda(0)+g^+_{\alpha_i}(a,0;0)}{\Delta}\alpha_i
+\mathcal{O}(\|\alpha\|^2)
\end{aligned}
\end{equation}
defined in $U_6$ such that the following statements hold for $\alpha\in U_6$.
\begin{itemize}
\item[{\rm(1)}] If $\varphi_2(\alpha)=0$, $\mathcal{T}_2(x;\alpha)$ has a unique zero $x=x^\star_0:=-\vartheta_2(\alpha)$ in $\{x:|x+a|<\varepsilon_6\}$,
which is of multiplicity two with $\mathcal{T}_{2xx}(x^\star_0;\alpha)<0$.
\item[{\rm(2)}] If $\varphi_2(\alpha)<0$, $\mathcal{T}_2(x;\alpha)$ has no zeros in $\{x:|x+a|<\varepsilon_6\}$ and, more precisely, $\mathcal{T}_2(x;\alpha)<0$.
\item[{\rm(3)}] If $\varphi_2(\alpha)>0$, $\mathcal{T}_2(x;\alpha)$ has exactly two zeros
$x=x^\star_\pm:=\pm\sqrt{\varphi_2(\alpha)}-\vartheta_2(\alpha)$ in $\{x:|x+a|<\varepsilon_6\}$, which are simple with
    $\mathcal{T}_{2x}(x^\star_\pm;\alpha)\lessgtr0$.
\end{itemize}
\end{lm}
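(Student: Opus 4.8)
The plan is to identify $(x;\alpha)=(-a;0)$ as a degenerate (double) zero of $\mathcal{T}_2$ of ``fold'' type and to unfold it exactly as in the proof of Lemma~\ref{sldynamcierwe3}. First I would compute the $2$-jet of $\mathcal{T}_2$ at $(-a;0)$. Since $\mathcal{T}_2(x;\alpha)=\sigma^+(x,0;\alpha)-\sigma^-(-x,0;\alpha)$, Lemma~\ref{cnkrcdwerc} gives $\mathcal{T}_2(-a;0)=b-b=0$, and differentiating in $x$ (note $\partial_x[\sigma^-(-x,0;\alpha)]=-\sigma^-_x(-x,0;\alpha)$) together with Lemma~\ref{Transipero}(2) --- applicable because $g^+(-a,0;0)=g^+(a,0;0)=0$ by {\bf(H1)} and {\bf(H2)$^\prime$} --- yields $\mathcal{T}_{2x}(-a;0)=\sigma^+_x(-a,0;0)+\sigma^-_x(a,0;0)=0$ and
$$
\mathcal{T}_{2xx}(-a;0)=\sigma^+_{xx}(-a,0;0)-\sigma^-_{xx}(a,0;0)=\frac{g^+_x(-a,0;0)\lambda^+(0)-g^+_x(a,0;0)\lambda^-(0)}{g^+(b,c;0)}=\frac{\lambda^+(0)\,\Delta}{\lambda(0)\,g^+(b,c;0)},
$$
the last equality coming from $\lambda^-(0)=\lambda^+(0)/\lambda(0)$ in Lemma~\ref{Transipero}(3). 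Because $\lambda^+(0),\lambda(0)>0$, $g^+(b,c;0)<0$, and $\Delta>0$ by {\bf(H3)}, this quantity is \emph{strictly negative}; this is precisely where {\bf(H3)} is used, and it is the negativity --- not merely the non-vanishing --- that forces all the sign assertions below. The same computation, combined with $\kappa_i^-=\kappa_i^+-\kappa_i\lambda^-(0)$ from Lemma~\ref{Transipero}(3), gives $\mathcal{T}_{2\alpha_i}(-a;0)=-\kappa_i\lambda^+(0)/(\lambda(0)g^+(b,c;0))$ and $\mathcal{T}_{2x\alpha_i}(-a;0)=\lambda^+(0)\big(g^+_{\alpha_i}(-a,0;0)\lambda(0)+g^+_{\alpha_i}(a,0;0)\big)/(\lambda(0)g^+(b,c;0))$.

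To unfold the double zero I would follow the proof of Lemma~\ref{sldynamcierwe3}. Introduce a parameter-dependent shift $\tilde x=x+a+\tilde\vartheta(\alpha)$ and use the Implicit Function Theorem to choose $\tilde\vartheta$ so that the coefficient of $\tilde x$ in $\mathcal{T}_2$ (as a function of $\tilde x$ near $0$) vanishes identically in $\alpha$ near $0$: this is legitimate since that coefficient is $\mathcal{T}_{2x}(-a-\tilde\vartheta;\alpha)$, which equals $0$ at $(\tilde\vartheta,\alpha)=(0,0)$ and has $\tilde\vartheta$-derivative $-\mathcal{T}_{2xx}(-a;0)\neq0$ there. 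Setting $\vartheta_2(\alpha):=a+\tilde\vartheta(\alpha)$ and using $\tilde\vartheta(\alpha)=\mathcal{T}_{2x}(-a;\alpha)/\mathcal{T}_{2xx}(-a;0)+\mathcal{O}(\|\alpha\|^2)$ with the formulas above produces exactly the claimed expansion of $\vartheta_2$. After the shift, $\mathcal{T}_2=\widehat A(\alpha)+\tfrac12\big(\mathcal{T}_{2xx}(-a;0)+\mathcal{O}(\|\alpha\|)\big)\tilde x^2+\mathcal{O}(\tilde x^3)$ with $\widehat A(\alpha)=\mathcal{T}_2(-\vartheta_2(\alpha);\alpha)=\sum_i\mathcal{T}_{2\alpha_i}(-a;0)\alpha_i+\mathcal{O}(\|\alpha\|^2)$; then, just as in Lemma~\ref{sldynamcierwe3}, regarding $\tilde x^2$ as a new variable and applying the Implicit Function Theorem (equivalently, a preparation argument factoring $\mathcal{T}_2$ near its double zero as a non-vanishing unit times a monic quadratic in $x+a$) yields a smooth $\varphi_2(\alpha)=-2\widehat A(\alpha)/\mathcal{T}_{2xx}(-a;0)+\mathcal{O}(\|\alpha\|^2)=\sum_i(2/\Delta)\kappa_i\alpha_i+\mathcal{O}(\|\alpha\|^2)$ such that, near $\tilde x=0$, $\mathcal{T}_2(\cdot;\alpha)$ has a unique double zero at $\tilde x=0$ when $\varphi_2(\alpha)=0$, no zero when $\varphi_2(\alpha)<0$, and two simple zeros at $\tilde x=\pm\sqrt{\varphi_2(\alpha)}$ when $\varphi_2(\alpha)>0$. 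Undoing the shift, $x=\tilde x-\vartheta_2(\alpha)$, gives $x^\star_0=-\vartheta_2(\alpha)$ and $x^\star_\pm=\pm\sqrt{\varphi_2(\alpha)}-\vartheta_2(\alpha)$.

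The remaining sign and multiplicity claims follow at once from $\mathcal{T}_{2xx}(-a;0)<0$. Near its double zero the map $\tilde x\mapsto\mathcal{T}_2(\tilde x;\alpha)$ has a strict local maximum at $\tilde x=0$ of value $\widehat A(\alpha)$, whose sign agrees with that of $\varphi_2(\alpha)$; hence $\mathcal{T}_2<0$ on all of $\{|x+a|<\varepsilon_6\}$ when $\varphi_2(\alpha)<0$, the double zero $x^\star_0$ has $\mathcal{T}_{2xx}(x^\star_0;\alpha)=\mathcal{T}_{2xx}(-a;0)+\mathcal{O}(\|\alpha\|)<0$ when $\varphi_2(\alpha)=0$, and the map is strictly decreasing through $x^\star_+$ and strictly increasing through $x^\star_-$ when $\varphi_2(\alpha)>0$, i.e. $\mathcal{T}_{2x}(x^\star_\pm;\alpha)\lessgtr0$. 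Shrinking $\varepsilon_6\in(0,\varepsilon_1)$ and $U_6\subset U_1$ so that this local picture is the only one that occurs then completes the argument.

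I expect the only genuine difficulty to be the bookkeeping in the $2$-jet computation: differentiating $\mathcal{T}_2$ correctly through the reflection $x\mapsto-x$ hidden in $\sigma^-$, and then consolidating the ``$+$-side'' quantities $\lambda^+(0),\kappa_i^+$ with the ``$-$-side'' ones $\lambda^-(0),\kappa_i^-$ via the identities of Lemma~\ref{Transipero}(3) so that every coefficient collapses to $\kappa_i$, $\Delta$ and $\lambda(0)$ --- this is exactly what makes the stated expansions of $\varphi_2$ and $\vartheta_2$ come out. A secondary technical point is to express the zeros of $\mathcal{T}_2$ \emph{exactly}, not merely asymptotically, in the form $\pm\sqrt{\varphi_2(\alpha)}-\vartheta_2(\alpha)$; the preparation-theorem version of the unfolding step is the cleanest way to secure this.
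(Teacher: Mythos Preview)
Your proposal is correct and follows essentially the same route as the paper: expand $\mathcal{T}_2$ to second order at $(-a;0)$ via Lemma~\ref{Transipero}, shift by an $\alpha$-dependent amount chosen through the Implicit Function Theorem to kill the linear term, and then use a second application of the Implicit Function Theorem (treating $\tilde x^2$ as a variable) to obtain $\varphi_2$ and the exact zero structure. Your jet computations, the use of $\lambda^-(0)=\lambda^+(0)/\lambda(0)$ and $\kappa_i^-=\kappa_i^+-\kappa_i\lambda^-(0)$ to collapse everything to $\Delta,\kappa_i,\lambda(0)$, and your treatment of the sign claims from $\mathcal{T}_{2xx}(-a;0)<0$ all match the paper's argument.
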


\begin{proof}
Under the assumptions {\bf (H0)}, {\bf (H1)} and ${\bf(H2)^\prime}$, we can apply Lemma~\ref{Transipero} to write
\begin{equation}\label{4fsdcvdf45}
\begin{aligned}
\mathcal{T}_2(x;\alpha)=&\sum_{i=1}^m\frac{\kappa_i^--\kappa_i^+}{g^+(b,c;0)}\alpha_i+\mathcal{O}(\|\alpha\|^2)
+\left(\sum_{i=1}^m\frac{g^+_{\alpha_i}(-a,0;0)\lambda^+(0)+g^+_{\alpha_i}(a,0;0)\lambda^-(0)}{g^+(b,c;0)}\alpha_i+\mathcal{O}(\|\alpha\|^2)\right)(x+a)\\
&+\left(\frac{g^+_x(-a,0;0)\lambda^+(0)-g^+_x(a,0;0)\lambda^-(0)}{2g^+(b,c;0)}+\mathcal{O}(\|\alpha\|)\right)(x+a)^2+\mathcal{O}((x+a)^3)\\
=&\sum_{i=1}^m\frac{-\kappa_i\lambda^-(0)}{g^+(b,c;0)}\alpha_i+\mathcal{O}(\|\alpha\|^2)
+\left(\sum_{i=1}^m\frac{\lambda^-(0)\left(g^+_{\alpha_i}(-a,0;0)\lambda(0)+g^+_{\alpha_i}(a,0;0)\right)}{g^+(b,c;0)}\alpha_i+\mathcal{O}(\|\alpha\|^2)\right)(x+a)\\
&+\left(\frac{\lambda^-(0)\Delta}{2g^+(b,c;0)}+\mathcal{O}(\|\alpha\|)\right)(x+a)^2+\mathcal{O}((x+a)^3)\\
\end{aligned}
\end{equation}
Perform a linear coordinate shift by introducing a new variable $\hat x=J(x;\alpha):=x+a+\hat\upsilon,$
where $\hat\upsilon=\hat\upsilon(\alpha)$ is a priori unknown function that will be determined later. Thus
\begin{equation}\label{ewr45vsdf}
\begin{aligned}
\mathcal{T}_2\circ J^{-1}(\hat x;\alpha)=&-\sum_{i=1}^m\frac{\kappa_i\lambda^-(0)}{g^+(b,c;0)}\alpha_i+\mathcal{O}(\|\alpha\|^2)\\
&-\left(\sum_{i=1}^m\frac{\lambda^-(0)\left(g^+_{\alpha_i}(-a,0;0)\lambda(0)+g^+_{\alpha_i}(a,0;0)\right)}{g^+(b,c;0)}\alpha_i+\mathcal{O}(\|\alpha\|^2)\right)\hat\upsilon
+\mathcal{O}(\hat\upsilon^2)\\
&+\Bigg\{\sum_{i=1}^m\frac{\lambda^-(0)\left(g^+_{\alpha_i}(-a,0;0)\lambda(0)+g^+_{\alpha_i}(a,0;0)\right)}{g^+(b,c;0)}\alpha_i+\mathcal{O}(\|\alpha\|^2)\\
&-\left(\frac{\lambda^-(0)\Delta}{g^+(b,c;0)}+\mathcal{O}(\|\alpha\|)\right)\hat\upsilon+\mathcal{O}(\hat\upsilon^2)\Bigg\}\hat x\\
&+\left(\frac{\lambda^-(0)\Delta}{2g^+(b,c;0)}+\mathcal{O}(\|\alpha\|)+\mathcal{O}(\hat\upsilon)\right)\hat x^2+\mathcal{O}(\hat x^3).
\end{aligned}
\end{equation}
Letting $K(\alpha,\hat\upsilon)$ be the coefficient of $\hat x$ in (\ref{ewr45vsdf}), we get $K(0,0)=0$,
$$
\begin{aligned}
\frac{\partial K(0,0)}{\partial\hat\upsilon}=-\frac{\lambda^-(0)\Delta}{g^+(b,c;0)},\qquad
\frac{\partial K(0,0)}{\partial\alpha_i}=\frac{\lambda^-(0)\left(g^+_{\alpha_i}(-a,0;0)\lambda(0)+g^+_{\alpha_i}(a,0;0)\right)}{g^+(b,c;0)}.
\end{aligned}
$$
Note that $\partial K(0,0)/\partial\hat\upsilon>0$, since $\Delta>0$ from {\bf (H3)} and $g^+(b,c;0)<0$ from Lemma~\ref{Transipero}. Then by the Implicit Function Theorem there is a neighborhood $\widetilde U_6$ of $\alpha=0$, a unique and smooth function
$$
\hat\upsilon(\alpha)=\sum_{i=1}^m\frac{g^+_{\alpha_i}(-a,0;0)\lambda(0)+g^+_{\alpha_i}(a,0;0)}{\Delta}\alpha_i
+\mathcal{O}(\|\alpha\|^2)
$$
defined in $\widetilde U_6$ such that $K(\alpha,\hat\upsilon(\alpha))=0$ for $\alpha\in \widetilde U_6$.

Substituting $\hat\upsilon(\alpha)$ into (\ref{ewr45vsdf}), we obtain
$$
\mathcal{T}_2\circ J^{-1}(\hat x;\alpha)=-\sum_{i=1}^m\frac{\kappa_i\lambda^-(0)}{g^+(b,c;0)}\alpha_i+\mathcal{O}(\|\alpha\|^2)
+\left(\frac{\lambda^-(0)\Delta}{2g^+(b,c;0)}+\mathcal{O}(\|\alpha\|)\right)\hat x^2+\mathcal{O}(\hat x^3).
$$
Due to $\lambda^-(0)\Delta/(2g^+(b,c;0))<0$, regarding $\hat x^2$ as a new variable and using the Implicit Function Theorem again, we can reduce $\widetilde U_6$ such that for $\alpha\in \widetilde U_6$ there is a unique and smooth function $\varphi_2(\alpha)$ satisfying (\ref{ew8csdr})
such that $\mathcal{T}_2\circ J^{-1}(\hat x;\alpha)$ near $\hat x=0$ has a unique zero $\hat x=\hat x^\star_0:=0$ if $\varphi_2(\alpha)=0$, no zeros and, more precisely, $\mathcal{T}_2\circ J^{-1}(\hat x;\alpha)<0$ if $\varphi_2(\alpha)<0$, and exactly two zeros $\hat x=\hat x^\star_\pm:=\pm\sqrt{\varphi_2(\alpha)}$ if $\varphi_2(\alpha)>0$.
Moreover,
$$
\begin{aligned}
\frac{\partial \mathcal{T}_2\circ J^{-1}(\hat x^\star_0;\alpha)}{\partial \hat x}=&~0,\qquad \frac{\partial^2 \mathcal{T}_2\circ J^{-1}(\hat x^\star_0;\alpha)}{\partial \hat x^2}=\frac{\lambda^-(0)\Delta}{g^+(b,c;0)}+\mathcal{O}(\|\alpha\|)<0,\\
\frac{\partial \mathcal{T}_2\circ J^{-1}(\hat x^\star_\pm;\alpha)}{\partial \hat x}=&~\pm\left(\frac{\lambda^-(0)\Delta}{g^+(b,c;0)}+\mathcal{O}(\|\alpha\|)+\mathcal{O}(\sqrt{\varphi_2(\alpha)})\right)\sqrt{\varphi_2(\alpha)}\lessgtr0.
\end{aligned}
$$
Finally, for any sufficiently small $\varepsilon_6\in(0,\varepsilon_1)$ the above analysis allows us to choose a suitable $U_6\subset \widetilde U_6$ such that this lemma holds, taking $\vartheta_2(\alpha):=a+\hat\upsilon(\alpha)$.
\end{proof}

\begin{lm}\label{rut45scd}
Consider the zeros of $\mathcal{T}_2(x;\alpha)$ given in Lemma~\ref{458vsvsdf}, namely $x=x^\star_0$ existing for $\varphi_2(\alpha)=0$ and $x=x^\star_\pm$ existing for $\varphi_2(\alpha)>0$. Let
$$
\begin{aligned}
\mathcal{I}_1:=&~(\max\{-\vartheta^-_1(\alpha),-\vartheta^+_1(\alpha)\},-a+\varepsilon_7),\\ \mathcal{I}_2:=&~(\min\{-\vartheta^-_1(\alpha),-\vartheta^+_1(\alpha)\},\max\{-\vartheta^-_1(\alpha),-\vartheta^+_1(\alpha)\}),\\
\mathcal{I}_3:=&~(-a-\varepsilon_7,\min\{-\vartheta^-_1(\alpha),-\vartheta^+_1(\alpha)\}),\\
\end{aligned}
$$
where $\varepsilon_7:=\min\{\varepsilon_5,\varepsilon_6\}$. Under the assumption of Theorem~\ref{codim-2-fold-bifur1} and $f^+(-a,0;0)>0$, if $(\mu_1,\mu_2,\cdots,\mu_m)\ne0$, there exists a neighborhood $U_7\subset U_5\cap U_6$ of $\alpha=0$ and two smooth functions
\begin{equation}\label{4389vj}
\vartheta_3(\alpha)=\left(1+\frac{g^+_x(a,0;0)}{\Delta}\right)^2+\mathcal{O}(\|\alpha\|),\qquad  \vartheta_4(\alpha)=\left(\frac{g^+_x(a,0;0)}{\Delta}\right)^2+\mathcal{O}(\|\alpha\|)
\end{equation}
defined in $U_7$ such that for $\alpha\in U_7$,
$$
\begin{aligned}
&x^\star_0
\left\{
\begin{aligned}
&\in \mathcal{I}_1~\quad\quad &&if~\varphi_1(\alpha)>0,\varphi_2(\alpha)=0,\\
&=-\vartheta^-_1(\alpha)=-\vartheta^+_1(\alpha)~\quad\quad &&if~\varphi_1(\alpha)=0,\varphi_2(\alpha)=0,\\
&\in \mathcal{I}_3~\quad\quad &&if~\varphi_1(\alpha)<0,\varphi_2(\alpha)=0;
\end{aligned}
\right.\\
&x^\star_+\left\{
\begin{aligned}
&\in \mathcal{I}_1~~\qquad\qquad\qquad &&if~\varphi_1(\alpha)\ge0,\varphi_2(\alpha)>0~or~\varphi_1(\alpha)<0,\varphi_2(\alpha)>\vartheta_3(\alpha)\varphi_1^2(\alpha),\\
&=-\vartheta^+_1(\alpha)~~\qquad\qquad\qquad &&if~\varphi_1(\alpha)<0,\varphi_2(\alpha)=\vartheta_3(\alpha)\varphi_1^2(\alpha),\\
&\in \mathcal{I}_2~~\qquad\qquad\qquad  &&if~\varphi_1(\alpha)<0,\vartheta_4(\alpha)\varphi_1^2(\alpha)<\varphi_2(\alpha)<\vartheta_3(\alpha)\varphi_1^2(\alpha),\\
&=-\vartheta^-_1(\alpha)~~\qquad\qquad\qquad &&if~\varphi_1(\alpha)<0,\varphi_2(\alpha)=\vartheta_4(\alpha)\varphi_1^2(\alpha),\\
&\in \mathcal{I}_3~~\qquad\qquad\qquad  &&if~\varphi_1(\alpha)<0,0<\varphi_2(\alpha)<\vartheta_4(\alpha)\varphi_1^2(\alpha);
\end{aligned}
\right.\\
&x^\star_-\left\{
\begin{aligned}
&\in \mathcal{I}_1~~\qquad\qquad\qquad &&if~\varphi_1(\alpha)>0,0<\varphi_2(\alpha)<\vartheta_4(\alpha)\varphi_1^2(\alpha),\\
&=-\vartheta^-_1(\alpha)~~\qquad\qquad\qquad &&if~\varphi_1(\alpha)>0,\varphi_2(\alpha)=\vartheta_4(\alpha)\varphi_1^2(\alpha),\\
&\in \mathcal{I}_2~~\qquad\qquad\qquad  &&if~\varphi_1(\alpha)>0,\vartheta_4(\alpha)\varphi_1^2(\alpha)<\varphi_2(\alpha)<\vartheta_3(\alpha)\varphi_1^2(\alpha),\\
&=-\vartheta^+_1(\alpha)~~\qquad\qquad\qquad &&if~\varphi_1(\alpha)>0,\varphi_2(\alpha)=\vartheta_3(\alpha)\varphi_1^2(\alpha),\\
&\in \mathcal{I}_3~~\qquad\qquad\qquad  &&if~\varphi_1(\alpha)>0,\varphi_2(\alpha)>\vartheta_3(\alpha)\varphi_1^2(\alpha)~or~\varphi_1(\alpha)\le0,\varphi_2(\alpha)>0.
\end{aligned}
\right.
\end{aligned}
$$
\end{lm}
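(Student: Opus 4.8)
The plan is to place each zero of $\mathcal{T}_2$ described in Lemma~\ref{458vsvsdf} — that is $x^\star_0=-\vartheta_2(\alpha)$ when $\varphi_2(\alpha)=0$, and $x^\star_\pm=\pm\sqrt{\varphi_2(\alpha)}-\vartheta_2(\alpha)$ when $\varphi_2(\alpha)>0$ — inside the three intervals cut out on $(-a-\varepsilon_7,-a+\varepsilon_7)$ by the two folds $-\vartheta^-_1(\alpha),-\vartheta^+_1(\alpha)$ of Lemma~\ref{dvn23fs}. Since these two points split that interval into $\mathcal{I}_3,\mathcal{I}_2,\mathcal{I}_1$ from left to right, a point $x$ of it lies in $\mathcal{I}_1$ (resp. $\mathcal{I}_3$) precisely when $x+\vartheta^-_1(\alpha)$ and $x+\vartheta^+_1(\alpha)$ are both positive (resp. both negative), lies in $\mathcal{I}_2$ precisely when these two numbers have opposite signs, and equals $-\vartheta^-_1(\alpha)$ (resp. $-\vartheta^+_1(\alpha)$) precisely when the first (resp. second) vanishes; this criterion is insensitive to which of the two folds is the larger, so it already encodes the dichotomy $\varphi_1(\alpha)\gtrless0$. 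Writing $\psi^\pm(\alpha):=\vartheta^\pm_1(\alpha)-\vartheta_2(\alpha)$, the six quantities to be signed are $x^\star_0+\vartheta^\pm_1(\alpha)=\psi^\pm(\alpha)$, $x^\star_++\vartheta^\pm_1(\alpha)=\sqrt{\varphi_2(\alpha)}+\psi^\pm(\alpha)$ and $x^\star_-+\vartheta^\pm_1(\alpha)=-\sqrt{\varphi_2(\alpha)}+\psi^\pm(\alpha)$, so everything reduces to understanding $\psi^+$ and $\psi^-$.

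The main step — and where I expect the real content to lie — is the claim that $\psi^+$ and $\psi^-$ vanish identically on $\{\varphi_1(\alpha)=0\}$ near $\alpha=0$. To see this, note first that $\sigma^+_x(-\vartheta^-_1(\alpha),0;\alpha)=0$ for all small $\alpha$, by formula (\ref{urvnjfhp}), because $-\vartheta^-_1(\alpha)$ is by construction a zero of $g^+(\cdot,0;\alpha)$; and when $\varphi_1(\alpha)=0$ one has $\vartheta^-_1(\alpha)=\vartheta^+_1(\alpha)$, so $g^+(\vartheta^-_1(\alpha),0;\alpha)=g^+(\vartheta^+_1(\alpha),0;\alpha)=0$, whence again (\ref{urvnjfhp}), applied to $\sigma^-$, gives $\sigma^-_x(\vartheta^-_1(\alpha),0;\alpha)=0$ as well. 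Since $\mathcal{T}_{2x}(x;\alpha)=\sigma^+_x(x,0;\alpha)+\sigma^-_x(-x,0;\alpha)$ by (\ref{95vvcs}), it follows that $\mathcal{T}_{2x}(-\vartheta^-_1(\alpha);\alpha)=0$ whenever $\varphi_1(\alpha)=0$. On the other hand, after substituting $\hat\upsilon(\alpha)$ into (\ref{ewr45vsdf}) in the proof of Lemma~\ref{458vsvsdf}, the derivative $\mathcal{T}_{2x}(\cdot;\alpha)$ is a strictly one-signed multiple of the shifted variable near the shifted origin, so — shrinking $\varepsilon_6$ — the map $\mathcal{T}_2(\cdot;\alpha)$ is strictly concave on $\{|x+a|<\varepsilon_6\}$ and has there the single critical point $x=-\vartheta_2(\alpha)$. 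Comparing gives $-\vartheta^-_1(\alpha)=-\vartheta_2(\alpha)$ on $\{\varphi_1(\alpha)=0\}$, i.e. $\psi^-(\alpha)=0$, and then $\psi^+(\alpha)=\vartheta^+_1(\alpha)-\vartheta_2(\alpha)=\vartheta^-_1(\alpha)-\vartheta_2(\alpha)=0$ too. As $(\mu_1,\dots,\mu_m)\ne0$, the gradient of $\varphi_1$ at $\alpha=0$ is nonzero by (\ref{qq485dscs}), so Lemma~\ref{eiowu45cd} (with $l=1$ and $c_1=\varphi_1$) applies to $F=\psi^\pm$ and produces $C^{k-1}$ functions $\omega^\pm$ with $\psi^\pm(\alpha)=\varphi_1(\alpha)\omega^\pm(\alpha)$ on a neighborhood of $\alpha=0$. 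Matching the linear parts of (\ref{23vdf24sff}), (\ref{ew8csdr}) and (\ref{qq485dscs}), and using $\Delta=g^+_x(-a,0;0)\lambda(0)-g^+_x(a,0;0)$, one obtains $\omega^-(0)=g^+_x(a,0;0)/\Delta$ and $\omega^+(0)=g^+_x(-a,0;0)\lambda(0)/\Delta$; moreover $\varphi_1(\omega^+-\omega^-)=\psi^+-\psi^-=\vartheta^+_1-\vartheta^-_1=\varphi_1$, so $\omega^+\equiv\omega^-+1$. Since $f^+(-a,0;0)>0$ forces $f^+(a,0;0)>0$ and then $g^+_x(a,0;0)>0$ by {\bf(H2)$^\prime$}, while $\Delta>0$ by {\bf(H3)}, we have $\omega^-(0)>0$; choosing a small $U_7\subset U_5\cap U_6$ we may assume $\omega^-(\alpha)>0$, hence $\omega^+(\alpha)=\omega^-(\alpha)+1>1$, for $\alpha\in U_7$.

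With $\vartheta_4(\alpha):=\omega^-(\alpha)^2$ and $\vartheta_3(\alpha):=\omega^+(\alpha)^2=(\omega^-(\alpha)+1)^2$ — which are smooth on $U_7$ and satisfy (\ref{4389vj}) by the values $\omega^\pm(0)$ above — the conclusion is then pure bookkeeping. For $x^\star_0$ one has $x^\star_0+\vartheta^\pm_1(\alpha)=\varphi_1(\alpha)\omega^\pm(\alpha)$, whose sign equals that of $\varphi_1(\alpha)$ since $\omega^\pm>0$; this gives the three alternatives for $x^\star_0$. For $x^\star_+$ (so $\varphi_2(\alpha)>0$), $x^\star_++\vartheta^\pm_1(\alpha)=\sqrt{\varphi_2(\alpha)}+\varphi_1(\alpha)\omega^\pm(\alpha)$ is positive whenever $\varphi_1(\alpha)\ge0$, giving $x^\star_+\in\mathcal{I}_1$; and when $\varphi_1(\alpha)<0$ the inequality $\sqrt{\varphi_2(\alpha)}+\varphi_1(\alpha)\omega^\pm(\alpha)>0$ is equivalent, after isolating $\sqrt{\varphi_2(\alpha)}$ and squaring (legitimate because $\omega^\pm>0$), to $\varphi_2(\alpha)>\vartheta_4(\alpha)\varphi_1^2(\alpha)$ for the lower fold and to $\varphi_2(\alpha)>\vartheta_3(\alpha)\varphi_1^2(\alpha)$ for the upper fold; since $0<\vartheta_4<\vartheta_3$ this produces exactly the five listed subcases. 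The treatment of $x^\star_-$ is symmetric, with $x^\star_-+\vartheta^\pm_1(\alpha)=-\sqrt{\varphi_2(\alpha)}+\varphi_1(\alpha)\omega^\pm(\alpha)$ negative whenever $\varphi_1(\alpha)\le0$ and, for $\varphi_1(\alpha)>0$, having the same sign as $\vartheta_4(\alpha)\varphi_1^2(\alpha)-\varphi_2(\alpha)$ (lower fold) and $\vartheta_3(\alpha)\varphi_1^2(\alpha)-\varphi_2(\alpha)$ (upper fold); this yields the remaining alternatives. The only delicate point in the whole argument is the divisibility $\psi^\pm|_{\{\varphi_1=0\}}\equiv0$: it says that making the two folds collide pins the vertex $-\vartheta_2(\alpha)$ of the concave map $\mathcal{T}_2$ onto the collapsed fold, and it is exactly what turns the decomposition Lemma~\ref{eiowu45cd} into the clean parabolic bifurcation boundaries $\varphi_2=\vartheta_3(\alpha)\varphi_1^2(\alpha)$ and $\varphi_2=\vartheta_4(\alpha)\varphi_1^2(\alpha)$.
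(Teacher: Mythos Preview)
Your proof is correct and follows essentially the same route as the paper's own argument: both hinge on showing that $\vartheta^\pm_1(\alpha)-\vartheta_2(\alpha)$ vanishes on $\{\varphi_1=0\}$ via the critical-point argument for $\mathcal{T}_2$ (using (\ref{urvnjfhp}) and the uniqueness of the critical point established in Lemma~\ref{458vsvsdf}), then invoke Lemma~\ref{eiowu45cd} to factor out $\varphi_1$, and finally square the resulting coefficients to define $\vartheta_3,\vartheta_4$ and run the sign analysis. Your observation that $\omega^+\equiv\omega^-+1$ identically (from $\psi^+-\psi^-=\vartheta^+_1-\vartheta^-_1=\varphi_1$) is a small streamlining the paper does not make explicit, but otherwise the two arguments coincide.
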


\begin{proof}
If $\varphi_1(\alpha)=0$, then
$$
\frac{\partial\mathcal{T}_2(-\vartheta^-_1(\alpha);\alpha)}{\partial x}=\sigma^+_x(-\vartheta^-_1(\alpha),0;\alpha)+\sigma^-_x(\vartheta^-_1(\alpha),0;\alpha)
=\sigma^+_x(-\vartheta^-_1(\alpha),0;\alpha)+\sigma^-_x(\vartheta^+_1(\alpha),0;\alpha)
=0
$$
for $\alpha\in U_5\cap U_6$ by (\ref{urvnjfhp}), (\ref{qq485dscs}), (\ref{95vvcs})  and the fact that $T^-_u=(-\vartheta^-_1(\alpha),0)$ and $T^+_u=(\vartheta^+_1(\alpha),0)$ are folds of the upper subsystem of $(\ref{Z2system})$ (see Lemma~\ref{dvn23fs}). Besides, from the proof of Lemma~\ref{458vsvsdf},
$$\frac{\partial\mathcal{T}_2(-\vartheta_2(\alpha);\alpha)}{\partial x}=
\frac{\partial\mathcal{T}_2(-a-\hat\upsilon(\alpha);\alpha)}{\partial x}=\frac{\partial\mathcal{T}_2\circ J^{-1}(0;\alpha)}{\partial\hat x}=0$$
for $\alpha\in U_6$, and $x=-\vartheta_2(\alpha)$ is the unique zero of $\partial\mathcal{T}_2(x;\alpha)/\partial x$ satisfying $\vartheta_2(0)=a$.
Thus
$$\vartheta_2(\alpha)=\vartheta^-_1(\alpha)=\vartheta^+_1(\alpha)\qquad {\rm if}~~\varphi_1(\alpha)=0,$$
for $\alpha\in U_5\cap U_6$.
Since we are assuming that $(\mu_1,\mu_2,\cdots,\mu_m)\ne0$, this, together with (\ref{4scsrve4}), (\ref{23vdf24sff}), (\ref{qq485dscs}), (\ref{ew8csdr}), {\bf (H3)} and Lemma~\ref{eiowu45cd}, implies that there is a neighborhood $U_7\subset U_5\cap U_6$ and two smooth functions
\begin{equation}\label{48vafosadcd}
\hat\vartheta_3(\alpha)=\frac{g^+_x(-a,0;0)\lambda(0)}{\Delta}+\mathcal{O}(\|\alpha\|),\qquad
\hat\vartheta_4(\alpha)=\frac{g^+_x(a,0;0)}{\Delta}+\mathcal{O}(\|\alpha\|)
\end{equation}
defined in $U_7$ such that
\begin{equation}\label{48vosadcd}
\begin{aligned}
-\vartheta_2(\alpha)+\vartheta^+_1(\alpha)&=-\sum_{i=1}^m\left(\frac{g^+_{\alpha_i}(a,0;0)}{g^+_x(a,0;0)}+
\frac{g^+_{\alpha_i}(-a,0;0)\lambda(0)+g^+_{\alpha_i}(a,0;0)}{\Delta}\right)\alpha_i+\mathcal{O}(\|\alpha\|^2)\\
&=-\sum_{i=1}^m\frac{g^+_{\alpha_i}(-a,0;0)g^+_x(a,0;0)+g^+_{\alpha_i}(a,0;0)g^+_x(-a,0;0)}
{g^+_x(a,0;0)\Delta}\lambda(0)\alpha_i+\mathcal{O}(\|\alpha\|^2)\\
&=\frac{g^+_x(-a,0;0)\lambda(0)}{\Delta}\varphi_1(\alpha)+\mathcal{O}(\|\alpha\|^2)\\
&=\hat\vartheta_3(\alpha)\varphi_1(\alpha)
\end{aligned}
\end{equation}
and
\begin{equation}\label{w5h4dsc}
\begin{aligned}
-\vartheta_2(\alpha)+\vartheta^-_1(\alpha)&=\sum_{i=1}^m\left(\frac{g^+_{\alpha_i}(-a,0;0)}{g^+_x(-a,0;0)}-
\frac{g^+_{\alpha_i}(-a,0;0)\lambda(0)+g^+_{\alpha_i}(a,0;0)}{\Delta}\right)\alpha_i+\mathcal{O}(\|\alpha\|^2)\\
&=-\sum_{i=1}^m\frac{g^+_{\alpha_i}(-a,0;0)g^+_x(a,0;0)+g^+_{\alpha_i}(a,0;0)g^+_x(-a,0;0)}
{g^+_x(-a,0;0)\Delta}\alpha_i+\mathcal{O}(\|\alpha\|^2)\\
&=\frac{g^+_x(a,0;0)}{\Delta}\varphi_1(\alpha)+\mathcal{O}(\|\alpha\|^2)\\
&=\hat\vartheta_4(\alpha)\varphi_1(\alpha)
\end{aligned}
\end{equation}
for $\alpha\in U_7$.

Consider the zero $x=x^\star_0=-\vartheta_2(\alpha)$ existing for $\varphi_2(\alpha)=0$. Note that
\begin{equation}\label{490kvsd}
g^+_x(-a,0;0)>0,\qquad g^+_x(a,0;0)>0.
\end{equation}
from {\bf (H1)}, {\bf(H2)}$^\prime$ and $f^+(-a,0;0)>0$. Then it follows directly from (\ref{48vosadcd}) and (\ref{w5h4dsc})
that the location of $x^\star_0$ is the one as stated in this lemma, due to $\Delta>0$, $\lambda(0)>0$ and (\ref{490kvsd}).

Consider the zero $x=x^\star_+=\sqrt{\varphi_2(\alpha)}-\vartheta_2(\alpha)$ existing for $\varphi_2(\alpha)>0$.
By (\ref{48vosadcd}) and (\ref{w5h4dsc}),
\begin{equation}\label{9859dscsdc}
x^\star_++\vartheta^+_1(\alpha)=\sqrt{\varphi_2(\alpha)}-\vartheta_2(\alpha)+\vartheta^+_1(\alpha)=\sqrt{\varphi_2(\alpha)}+\hat\vartheta_3(\alpha)\varphi_1(\alpha)
\end{equation}
and
\begin{equation}\label{9asf859dscsdc}
x^\star_++\vartheta^-_1(\alpha)=\sqrt{\varphi_2(\alpha)}-\vartheta_2(\alpha)+\vartheta^-_1(\alpha)=\sqrt{\varphi_2(\alpha)}
+\hat\vartheta_4(\alpha)\varphi_1(\alpha).
\end{equation}
Take
$
\vartheta_3(\alpha):=\hat\vartheta_3^2(\alpha), \vartheta_4(\alpha):=\hat\vartheta_4^2(\alpha)
$
for $\alpha\in U_7$. Clearly, they satisfy (\ref{4389vj}) by {\bf (H3)} and (\ref{48vafosadcd}).
It follows from (\ref{48vafosadcd}), (\ref{490kvsd}), (\ref{9859dscsdc}), $\Delta>0$ and $\lambda(0)>0$ that $x^\star_++\vartheta^+_1(\alpha)>0$ if $\varphi_1(\alpha)\ge0$, while if $\varphi_1(\alpha)<0$, $x^\star_++\vartheta^+_1(\alpha)>0$ (resp. $=0, <0$) for $\varphi_2(\alpha)-\vartheta_3(\alpha)\varphi_1^2(\alpha)>0$ (resp. $=0, <0$). Moreover, it follows from (\ref{48vafosadcd}), (\ref{490kvsd}), (\ref{9asf859dscsdc}), $\Delta>0$ and $\lambda(0)>0$ that $x^\star_++\vartheta^-_1(\alpha)>0$ if $\varphi_1(\alpha)\ge0$, while if $\varphi_1(\alpha)<0$, $x^\star_++\vartheta^-_1(\alpha)>0$ (resp. $=0, <0$) for $\varphi_2(\alpha)-\vartheta_4(\alpha)\varphi_1^2(\alpha)>0$ (resp. $=0, <0$). As a result, these arguments yield the location of $x^\star_+$ as stated in this lemma, due to $\vartheta_3(\alpha)>\vartheta_4(\alpha)$ from $\Delta>0$, $\lambda(0)>0$ and (\ref{490kvsd}).

Consider the zero $x=x^\star_-=-\sqrt{\varphi_2(\alpha)}-\vartheta_2(\alpha)$ existing for $\varphi_2(\alpha)>0$.
By (\ref{48vosadcd}) and (\ref{w5h4dsc}),
\begin{equation}\label{98ad59dscsdc}
x^\star_-+\vartheta^+_1(\alpha)=-\sqrt{\varphi_2(\alpha)}-\vartheta_2(\alpha)+\vartheta^+_1(\alpha)=-\sqrt{\varphi_2(\alpha)}
+\hat\vartheta_3(\alpha)\varphi_1(\alpha)
\end{equation}
and
\begin{equation}\label{98af59dscsdc}
x^\star_-+\vartheta^-_1(\alpha)=-\sqrt{\varphi_2(\alpha)}-\vartheta_2(\alpha)+\vartheta^-_1(\alpha)=-\sqrt{\varphi_2(\alpha)}
+\hat\vartheta_4(\alpha)\varphi_1(\alpha).
\end{equation}
It follows from (\ref{48vafosadcd}), (\ref{490kvsd}), (\ref{98ad59dscsdc}), $\Delta>0$ and $\lambda(0)>0$ that $x^\star_-+\vartheta^+_1(\alpha)<0$ if $\varphi_1(\alpha)\le0$, while if $\varphi_1(\alpha)>0$,  $x^\star_-+\vartheta^+_1(\alpha)>0$ (resp. $=0, <0$) for $\varphi_2(\alpha)-\vartheta_3(\alpha)\varphi_1^2(\alpha)<0$ (resp. $=0, >0$). Moreover, it follows from (\ref{48vafosadcd}), (\ref{490kvsd}), (\ref{98af59dscsdc}), $\Delta>0$ and $\lambda(0)>0$ that $x^\star_-+\vartheta^-_1(\alpha)<0$ if $\varphi_1(\alpha)\le0$, while if $\varphi_1(\alpha)>0$, $x^\star_-+\vartheta^-_1(\alpha)>0$ (resp. $=0, <0$) for $\varphi_2(\alpha)-\vartheta_4(\alpha)\varphi_1^2(\alpha)<0$ (resp. $=0, >0$). As a result, these arguments yield the location of $x^\star_-$ as stated in this lemma, due to $\vartheta_3(\alpha)>\vartheta_4(\alpha)$ again.
\end{proof}

In addition to crossing cycles and critical crossing cycles, it is possible for system (\ref{Z2system}) to bifurcate sliding cycles and various connections from $\Gamma_0$. To identify these dynamical behaviors, we need the following lemma.

\begin{lm}\label{3829798jcdf}
Consider the maps $\sigma^\pm(x,y;\alpha)$ for $\|(x\pm a,y)\|<\varepsilon_1$ and $\alpha\in U_7$ given in Lemma~\ref{cnkrcdwerc}, $\vartheta^\pm_1(\alpha)$ and $\varpi(\alpha)$ for $\alpha\in U_7$ given in Lemma~\ref{dvn23fs}. Under the assumption of Theorem~\ref{codim-2-fold-bifur1} and $f^+(-a,0;0)>0$, if $(\mu_1,\mu_2,\cdots,\mu_m)\ne0$ and $(\kappa_1,\kappa_2,\cdots,\kappa_m)\ne0$, there exists a neighborhood $U_8\subset U_7$ of $\alpha=0$ and smooth functions
\begin{equation}\label{4i57vjf}
\begin{aligned}
\vartheta_5(\alpha)&=\left(\frac{g^+_x(a,0;0)}{\Delta}\right)^2+\frac{g^+_x(a,0;0)}{\Delta}
+\mathcal{O}(\|\alpha\|), \\
\vartheta_6(\alpha)&=\left(\frac{g^+_x(a,0;0)}{\Delta}\right)^2+\frac{(1-(1+r)^2)g^+_x(a,0;0)}{\Delta}
+\mathcal{O}(\|\alpha\|), \\
\vartheta_7(\alpha)
&=\left(1+\frac{g^+_x(a,0;0)}{\Delta}\right)^2+\frac{(r^2-1)\lambda(0)g^+_x(-a,0;0)}{\Delta}+\mathcal{O}(\|\alpha\|)
\end{aligned}
\end{equation}
defined in $U_8$ such that the following statements hold for $\alpha\in U_8$, where
$$r:=-\frac{f^+(-a,0;0)g^+_x(a,0;0)}{g^+_x(-a,0;0)f^+(a,0;0)+g^+_x(a,0;0)f^+(-a,0;0)}\in(-1,0).$$
\begin{itemize}
\item[{\rm(1)}] $\sigma^+(-\vartheta^-_1(\alpha),0;\alpha)-\sigma^-(\vartheta^+_1(\alpha),0;\alpha)>0$ $($resp. $=0, <0)$ if $\varphi_2(\alpha)-\vartheta_5(\alpha)\varphi_1^2(\alpha)>0$ $($resp. $=0, <0)$.
\item[{\rm(2)}] $\sigma^+(-\vartheta^-_1(\alpha),0;\alpha)-\sigma^-(\varpi(\alpha),0;\alpha)>0$ $($resp. $=0, <0)$ if $\varphi_2(\alpha)-\vartheta_6(\alpha)\varphi_1^2(\alpha)>0$ $($resp. $=0, <0)$.
\item[{\rm(3)}] $\sigma^+(-\varpi(\alpha),0;\alpha)-\sigma^-(\vartheta^+_1(\alpha),0;\alpha)>0$ $($resp. $=0, <0)$ if $\varphi_2(\alpha)-\vartheta_7(\alpha)\varphi_1^2(\alpha)>0$ $($resp. $=0, <0)$.
\end{itemize}
\end{lm}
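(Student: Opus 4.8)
The plan is to reduce each of the three sign comparisons to the transition map $\mathcal{T}_2(x;\alpha)$ of Lemma~\ref{458vsvsdf} evaluated at a suitable point, plus a correction term that I will show factors through $\varphi_1^2(\alpha)$; the functions $\vartheta_5,\vartheta_6,\vartheta_7$ then appear as quotients of these data by one common positive smooth factor. First I would record several structural facts. By (\ref{urvnjfhp}), $\sigma^-_x(\vartheta^+_1(\alpha),0;\alpha)=0$ for $\alpha\in U_7$, since $g^+(\vartheta^+_1(\alpha),0;\alpha)=0$ by Lemma~\ref{dvn23fs}. From the proof of Lemma~\ref{458vsvsdf}, $x=-\vartheta_2(\alpha)$ is the unique critical point of $x\mapsto\mathcal{T}_2(x;\alpha)$, with $\mathcal{T}_{2xx}(-\vartheta_2(0);0)=\lambda^-(0)\Delta/g^+(b,c;0)<0$. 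On $\{\alpha:\varphi_1(\alpha)=0\}$ one has $\vartheta^-_1(\alpha)=\vartheta^+_1(\alpha)=\vartheta_2(\alpha)=\varpi(\alpha)$: the first two equalities come from (\ref{w5h4dsc}), and then $f^s(-\vartheta_2(\alpha);\alpha)=0$ because $g^+(\pm\vartheta_2(\alpha),0;\alpha)=0$, so the uniqueness of the zero of $f^s$ near $-a$ (Lemma~\ref{dvn23fs}) forces the last equality. Finally, on $\{\alpha:\varphi_2(\alpha)=0\}$ one has $\mathcal{T}_2(-\vartheta_2(\alpha);\alpha)=0$, because $-\vartheta_2(\alpha)=x^\star_0$ is then a double zero of $\mathcal{T}_2$ by Lemma~\ref{458vsvsdf}(1).

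Next I would invoke the decomposition Lemma~\ref{eiowu45cd}. Since $(\kappa_1,\dots,\kappa_m)\ne0$, $\nabla\varphi_2(0)\ne0$, so $\mathcal{T}_2(-\vartheta_2(\alpha);\alpha)=\varphi_2(\alpha)\ell(\alpha)$ with $\ell$ smooth; matching linear parts via (\ref{ew8csdr}) and the expansion of $\mathcal{T}_2\circ J^{-1}$ in the proof of Lemma~\ref{458vsvsdf} gives $\ell(0)=-\lambda^-(0)\Delta/(2g^+(b,c;0))>0$ (using $\Delta>0$, $\lambda^-(0)>0$, $g^+(b,c;0)<0$). Since $(\mu_1,\dots,\mu_m)\ne0$, $\nabla\varphi_1(0)\ne0$, so $\varpi(\alpha)-\vartheta^-_1(\alpha)=\varphi_1(\alpha)q_-(\alpha)$ and $\varpi(\alpha)-\vartheta^+_1(\alpha)=\varphi_1(\alpha)q_+(\alpha)$ with $q_\pm$ smooth, and matching linear parts via (\ref{943ce23dew3dsf}), (\ref{23vdf24sff}) gives $q_-(0)=-r$, $q_+(0)=-(1+r)$. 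Also, from (\ref{w5h4dsc}), $\vartheta^-_1(\alpha)-\vartheta_2(\alpha)=\hat\vartheta_4(\alpha)\varphi_1(\alpha)$ with $\hat\vartheta_4(0)=g^+_x(a,0;0)/\Delta$, hence $\varpi-\vartheta_2=\varphi_1(q_-+\hat\vartheta_4)$.

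For part (1) I would write $D_1(\alpha):=\sigma^+(-\vartheta^-_1,0;\alpha)-\sigma^-(\vartheta^+_1,0;\alpha)=\mathcal{T}_2(-\vartheta^-_1;\alpha)+[\sigma^-(\vartheta^-_1,0;\alpha)-\sigma^-(\vartheta^+_1,0;\alpha)]$. Taylor's formula with integral remainder applied to $\mathcal{T}_2(\cdot;\alpha)$ at its critical point $-\vartheta_2$ (so the linear term drops) and to $\sigma^-(\cdot,0;\alpha)$ at $\vartheta^+_1$ (where $\sigma^-_x=0$), using $-\vartheta^-_1+\vartheta_2=-\hat\vartheta_4\varphi_1$ and $\vartheta^-_1-\vartheta^+_1=-\varphi_1$, gives $D_1=\ell\varphi_2+\varphi_1^2(\hat\vartheta_4^2R+R_1)$ with $R,R_1$ smooth, $R(0)=\tfrac12\mathcal{T}_{2xx}(-a;0)$, $R_1(0)=\tfrac12\sigma^-_{xx}(a,0;0)$; hence $D_1=\ell[\varphi_2-\vartheta_5\varphi_1^2]$ with $\vartheta_5:=-(\hat\vartheta_4^2R+R_1)/\ell$ smooth. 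Since $\ell>0$ near $0$ the sign equivalence of statement (1) follows, and inserting the values of $\mathcal{T}_{2xx}(-a;0)$, $\sigma^-_{xx}(a,0;0)$ (Lemmas~\ref{Transipero}, \ref{458vsvsdf}) and $\hat\vartheta_4(0)$ gives $\vartheta_5(0)=(g^+_x(a,0;0)/\Delta)^2+g^+_x(a,0;0)/\Delta$, matching (\ref{4i57vjf}). Parts (2) and (3) proceed identically: in (2) replace $\sigma^-(\vartheta^+_1,0;\alpha)$ by $\sigma^-(\varpi,0;\alpha)$ and expand the difference $\sigma^-(\vartheta^-_1,0;\alpha)-\sigma^-(\varpi,0;\alpha)$ as two second-order remainders of $\sigma^-(\cdot,0;\alpha)$ at $\vartheta^+_1$, using $\varpi-\vartheta^+_1=\varphi_1q_+$; in (3) additionally replace $\sigma^+(-\vartheta^-_1,0;\alpha)$ by $\sigma^+(-\varpi,0;\alpha)$, so the leading term is $\mathcal{T}_2(-\varpi;\alpha)$, expanded at $-\vartheta_2$ using $\varpi-\vartheta_2=\varphi_1(q_-+\hat\vartheta_4)$. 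In both cases every correction is a smooth multiple of $\varphi_1^2(\alpha)$, yielding $D_2=\ell[\varphi_2-\vartheta_6\varphi_1^2]$ and $D_3=\ell[\varphi_2-\vartheta_7\varphi_1^2]$ with $\vartheta_6,\vartheta_7$ smooth, and evaluating the leading coefficients with $q_-(0)=-r$, $q_+(0)=-(1+r)$, $\hat\vartheta_4(0)=g^+_x(a,0;0)/\Delta$ reproduces the constants in (\ref{4i57vjf}).

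I expect the main obstacle to lie in the preparatory step: verifying the coincidences $\vartheta^-_1=\vartheta^+_1=\vartheta_2=\varpi$ on $\{\varphi_1=0\}$ and $\mathcal{T}_2(-\vartheta_2(\cdot);\cdot)=0$ on $\{\varphi_2=0\}$ that license Lemma~\ref{eiowu45cd} (so that all corrections genuinely factor through $\varphi_1^2$ and $\varphi_2$), and then keeping the leading-order bookkeeping consistent enough that the stated constants emerge. In particular, for $\vartheta_7(0)$ one first obtains $(-r+g^+_x(a,0;0)/\Delta)^2+(1+r)^2g^+_x(a,0;0)/\Delta$, and rewriting this in the displayed form $(1+g^+_x(a,0;0)/\Delta)^2+(r^2-1)\lambda(0)g^+_x(-a,0;0)/\Delta$ requires the relation $\Delta=g^+_x(-a,0;0)\lambda(0)-g^+_x(a,0;0)$ from {\bf(H3)}.
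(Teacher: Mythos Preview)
Your proposal is correct and follows essentially the paper's approach: both decompose each difference into a $\mathcal{T}_2$-term plus a $\sigma^\pm$-correction, use Lemma~\ref{eiowu45cd} to extract the factors $\varphi_1$ and $\varphi_2$, and compute the leading constants via Lemmas~\ref{Transipero} and \ref{458vsvsdf}. The only difference is organizational---you Taylor-expand about the points $-\vartheta_2(\alpha)$ and $\vartheta^+_1(\alpha)$ where $\mathcal{T}_{2x}$ and $\sigma^-_x$ vanish identically, whereas the paper expands about $-\vartheta^\pm_1(\alpha)$ and $\vartheta^-_1(\alpha)$ and invokes Lemma~\ref{eiowu45cd} once more to factor the first-order terms $\sigma^\pm_x(\mp\vartheta^\mp_1,0;\alpha)=\omega_{2,3}(\alpha)\varphi_1(\alpha)$---but your $\ell$ equals the paper's $\omega_1$ and the resulting $\vartheta_5,\vartheta_6,\vartheta_7$ coincide.
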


\begin{proof}
For $\alpha\in U_7$ we have known from Lemma~\ref{rut45scd} that $\mathcal{T}_2(-\vartheta^+_1(\alpha);\alpha)=0$ if $\varphi_2(\alpha)-\vartheta_3(\alpha)\varphi_1^2(\alpha)=0$, $\mathcal{T}_2(-\vartheta^-_1(\alpha);\alpha)=0$ if $\varphi_2(\alpha)-\vartheta_4(\alpha)\varphi_1^2(\alpha)=0$, and $\mathcal{T}_2(-\vartheta^+_1(\alpha);\alpha)=\mathcal{T}_2(-\vartheta^-_1(\alpha);\alpha)$ if $\varphi_1(\alpha)=0$. Besides, it follows that $\nabla(\varphi_2(\alpha)-\vartheta_3(\alpha)\varphi_1^2(\alpha))\ne0$ and $\nabla(\varphi_2(\alpha)-\vartheta_4(\alpha)\varphi_1^2(\alpha))\ne0$ from (\ref{qq485dscs}), (\ref{ew8csdr}), (\ref{4389vj}) and $(\kappa_1,\kappa_2,\cdots,\kappa_m)\ne0$. Thus by Lemma~\ref{eiowu45cd} there is a neighborhood $U_{81}\subset U_7$ of $\alpha=0$ and a smooth function $\omega_1(\alpha)$ defined in $U_{81}$ such that
\begin{equation}\label{493t9csd}
\mathcal{T}_2(-\vartheta^+_1(\alpha);\alpha)=\omega_1(\alpha)(\varphi_2(\alpha)-\vartheta_3(\alpha)\varphi_1^2(\alpha)),\quad
\mathcal{T}_2(-\vartheta^-_1(\alpha);\alpha)=\omega_1(\alpha)(\varphi_2(\alpha)-\vartheta_4(\alpha)\varphi_1^2(\alpha))
\end{equation}
for $\alpha\in U_{81}$. Substituting $\vartheta^\pm_1(\alpha)$ given in (\ref{23vdf24sff}) into (\ref{4fsdcvdf45}) and using (\ref{ew8csdr}),
we get
\begin{equation}\label{4faa93t9csd}
\omega_1(\alpha)=-\frac{\lambda^-(0)\Delta}{2g^+(b,c;0)}+\mathcal{O}(\|\alpha\|)
\end{equation}
from (\ref{493t9csd}).

By (\ref{urvnjfhp}), (\ref{qq485dscs}) and the fact that $T^-_u=(-\vartheta^-_1(\alpha),0)$ and $T^+_u=(\vartheta^+_1(\alpha),0)$ are folds of the upper subsystem of $(\ref{Z2system})$ (see  Lemma~\ref{dvn23fs}), we obtain
$\sigma^+_x(-\vartheta^+_1(\alpha),0;\alpha)=\sigma^-_x(\vartheta^-_1(\alpha),0;\alpha)=0$ if $\varphi_1(\alpha)=0.$
Besides, it follows that $\nabla\varphi_1(\alpha)\ne0$ from (\ref{qq485dscs}) and $(\mu_1,\mu_2,\cdots,\mu_m)\ne0$. Thus by Lemma~\ref{eiowu45cd} there is a neighborhood $U_{82}\subset U_7$ of $\alpha=0$ and smooth functions $\omega_2(\alpha)$ and $\omega_3(\alpha)$ defined in $U_{82}$ such that
\begin{equation}\label{4afa93t9csd}
\sigma^+_{x}(-\vartheta^+_1(\alpha),0;\alpha)=\omega_2(\alpha)\varphi_1(\alpha),\qquad \sigma^-_{x}(\vartheta^-_1(\alpha),0;\alpha)=\omega_3(\alpha)\varphi_1(\alpha)
\end{equation}
for $\alpha\in U_{82}$. Note that
$$
\begin{aligned}
\sigma^+_x(x,0;\alpha)&=\sum_{i=1}^m\sigma^+_{x\alpha_i}(-a,0;0)\alpha_i+\mathcal{O}(\|\alpha\|^2)
+\left(\sigma^+_{xx}(-a,0;0)+\mathcal{O}(\|\alpha\|)\right)(x+a)+\mathcal{O}((x+a)^2)\\
&=\sum_{i=1}^m\frac{g^+_{\alpha_i}(-a,0;0)\lambda^+(0)}{g^+(b,c;0)}\alpha_i+\mathcal{O}(\|\alpha\|^2)
+\left(\frac{g^+_x(-a,0;0)\lambda^+(0)}{g^+(b,c;0)}+\mathcal{O}(\|\alpha\|)\right)(x+a)+\mathcal{O}((x+a)^2)\\
\end{aligned}
$$
and
$$
\begin{aligned}
\sigma^-_x(x,0;\alpha)&=\sum_{i=1}^m\sigma^-_{x\alpha_i}(a,0;0)\alpha_i+\mathcal{O}(\|\alpha\|^2)
+\left(\sigma^-_{xx}(a,0;0)+\mathcal{O}(\|\alpha\|)\right)(x-a)+\mathcal{O}((x-a)^2)\\
&=\sum_{i=1}^m\frac{g^+_{\alpha_i}(a,0;0)\lambda^-(0)}{g^+(b,c;0)}\alpha_i+\mathcal{O}(\|\alpha\|^2)
+\left(\frac{g^+_x(a,0;0)\lambda^-(0)}{g^+(b,c;0)}+\mathcal{O}(\|\alpha\|)\right)(x-a)+\mathcal{O}((x-a)^2)
\end{aligned}
$$
by Lemma~\ref{Transipero}.
Substituting the expansion of $\vartheta^+_1(\alpha)$ (resp. $\vartheta^-_1(\alpha)$) given in (\ref{23vdf24sff}) into the expansion of $\sigma^+_x(x,0;\alpha)$ (resp. $\sigma^-_x(x,0;\alpha)$) and using (\ref{4scsrve4}) and (\ref{qq485dscs}), we get
\begin{equation}\label{493t9csdadd}
\omega_2(\alpha)=-\frac{\lambda^+(0)g^+_x(-a,0;0)}{g^+(b,c;0)}+\mathcal{O}(\|\alpha\|),\qquad \omega_3(\alpha)=-\frac{\lambda^-(0)g^+_x(a,0;0)}{g^+(b,c;0)}+\mathcal{O}(\|\alpha\|)
\end{equation}
from (\ref{4afa93t9csd}).

By (\ref{95vvcs}), (\ref{493t9csd}) and (\ref{4afa93t9csd}),
\begin{equation}\label{4839dssdc}
\begin{aligned}
&~\sigma^+(-\vartheta^-_1(\alpha),0;\alpha)-\sigma^-(\vartheta^+_1(\alpha),0;\alpha)\\
=&~\sigma^+(-\vartheta^-_1(\alpha),0;\alpha)-\sigma^-(\vartheta^-_1(\alpha)+\varphi_1(\alpha),0;\alpha)\\
=&~\mathcal{T}_2(-\vartheta^-_1(\alpha),0;\alpha)-\sigma^-_x(\vartheta^-_1(\alpha),0;\alpha)\varphi_1(\alpha)
-\frac{1}{2}\sigma^-_{xx}(\vartheta^-_1(\alpha),0;\alpha)\varphi_1^2(\alpha)+\mathcal{O}(\varphi^3_1(\alpha))\\
=&~\omega_1(\alpha)(\varphi_2(\alpha)-\vartheta_4(\alpha)\varphi_1^2(\alpha))-\omega_3(\alpha)\varphi_1^2(\alpha)
-\frac{1}{2}\sigma^-_{xx}(\vartheta^-_1(\alpha),0;\alpha)\varphi_1^2(\alpha)+\mathcal{O}(\varphi^3_1(\alpha))\\
=&~\omega_1(\alpha)(\varphi_2(\alpha)-\vartheta_5(\alpha)\varphi_1^2(\alpha))
\end{aligned}
\end{equation}
for $\alpha\in U_{81}\cap U_{82}$, where
$$\vartheta_5(\alpha):=\vartheta_4(\alpha)+
\frac{\omega_3(\alpha)+\sigma^-_{xx}(\vartheta^-_1(\alpha),0;\alpha)/2+\mathcal{O}(\varphi_1(\alpha))}{\omega_1(\alpha)}$$
and it can be written as the expansion in (\ref{4i57vjf}) by (\ref{qq485dscs}), (\ref{4389vj}), (\ref{4faa93t9csd}), (\ref{493t9csdadd}) and the fact
\begin{equation}\label{32vsdvsd}
\sigma^-_{xx}(\vartheta^-_1(\alpha),0;\alpha)=\sigma^-_{xx}(a,0;0)+\mathcal{O}(\|\alpha\|)=\frac{g^+_x(a,0;0)\lambda^-(0)}{g^+(b,c;0)}+\mathcal{O}(\|\alpha\|)\\
\end{equation}
from Lemma~\ref{Transipero}.

Consider the function $\varpi(\alpha)$ for $\alpha\in U_7$. Note that
$\varpi(\alpha)=\vartheta^-_1(\alpha)=\vartheta^+_1(\alpha)$ if $\varphi_1(\alpha)=0,$
and $\nabla\varphi_1(\alpha)\ne0$ from (\ref{qq485dscs}) and $(\mu_1,\mu_2,\cdots,\mu_m)\ne0$.
Then by Lemma~\ref{eiowu45cd} there is a neighborhood $U_{83}\subset U_7$ of $\alpha=0$ and a smooth function $\omega_4(\alpha)$ defined in $U_{83}$ such that
\begin{equation}\label{3827rcsdvs}
-\varpi(\alpha)+\vartheta^-_1(\alpha)=\omega_4(\alpha)\varphi_1(\alpha),\qquad -\varpi(\alpha)+\vartheta^+_1(\alpha)=(1+\omega_4(\alpha))\varphi_1(\alpha)
\end{equation}
for $\alpha\in U_{83}$. Moreover, it follows from (\ref{4scsrve4}), (\ref{23vdf24sff}), (\ref{qq485dscs}) and (\ref{943ce23dew3dsf}) that
\begin{equation}\label{38dad27rcsdvs}
\omega_4(\alpha)=r+\mathcal{O}(\|\alpha\|).
\end{equation}
By (\ref{95vvcs}), (\ref{493t9csd}), (\ref{4afa93t9csd}) and (\ref{3827rcsdvs}),
\begin{equation}\label{woioks3453}
\begin{aligned}
\sigma^+(-\vartheta^-_1(\alpha),0;\alpha)-\sigma^-(\varpi(\alpha),0;\alpha)
=&~\sigma^+(-\vartheta^-_1(\alpha),0;\alpha)-\sigma^-(\vartheta^-_1(\alpha)-\omega_4(\alpha)\varphi_1(\alpha),0;\alpha)\\
=&~\mathcal{T}_2(-\vartheta^-_1(\alpha),0;\alpha)+\sigma^-_x(\vartheta^-_1(\alpha),0;\alpha)\omega_4(\alpha)\varphi_1(\alpha)\\
&-\frac{1}{2}\sigma^-_{xx}(\vartheta^-_1(\alpha),0;\alpha)\omega_4^2(\alpha)\varphi_1^2(\alpha)+\mathcal{O}(\varphi^3_1(\alpha))\\
=&~\omega_1(\alpha)(\varphi_2(\alpha)-\vartheta_4(\alpha)\varphi_1^2(\alpha))+\omega_3(\alpha)\omega_4(\alpha)\varphi_1^2(\alpha)\\
&-\frac{1}{2}\sigma^-_{xx}(\vartheta^-_1(\alpha),0;\alpha)\omega_4^2(\alpha)\varphi_1^2(\alpha)+\mathcal{O}(\varphi^3_1(\alpha))\\
=&~\omega_1(\alpha)(\varphi_2(\alpha)-\vartheta_6(\alpha)\varphi_1^2(\alpha))
\end{aligned}
\end{equation}
and
\begin{equation}\label{woioks345afafa}
\begin{aligned}
\sigma^+(-\varpi(\alpha),0;\alpha)-\sigma^-(\vartheta^+_1(\alpha),0;\alpha)
=&~\sigma^+(-\vartheta^+_1(\alpha)+(1+\omega_4(\alpha))\varphi_1(\alpha),0;\alpha)-\sigma^-(\vartheta^+_1(\alpha),0;\alpha)\\
=&~\mathcal{T}_2(-\vartheta^+_1(\alpha);\alpha)+\sigma^+_{x}(-\vartheta^+_1(\alpha),0;\alpha)(1+\omega_4(\alpha))\varphi_1(\alpha)\\
&+\frac{1}{2}\sigma^+_{xx}(-\vartheta^+_1(\alpha),0;\alpha)(1+\omega_4(\alpha))^2\varphi_1(\alpha)^2+\mathcal{O}(\varphi_1^3(\alpha))\\
=&~\omega_1(\alpha)(\varphi_2(\alpha)-\vartheta_3(\alpha)\varphi_1^2(\alpha))+\omega_2(\alpha)(1+\omega_4(\alpha))\varphi_1^2(\alpha)\\
&+\frac{1}{2}\sigma^+_{xx}(-\vartheta^+_1(\alpha),0;\alpha)(1+\omega_4(\alpha))^2\varphi_1(\alpha)^2+\mathcal{O}(\varphi_1^3(\alpha))\\
=&~\omega_1(\alpha)(\varphi_2(\alpha)-\vartheta_7(\alpha)\varphi_1^2(\alpha))
\end{aligned}
\end{equation}
for $\alpha\in U_{83}$, where
$$
\begin{aligned}
\vartheta_6(\alpha):=&~\vartheta_4(\alpha)
-\frac{\omega_3(\alpha)\omega_4(\alpha)-\sigma^-_{xx}(\vartheta^-_1(\alpha),0;\alpha)\omega_4^2(\alpha)/2+\mathcal{O}(\varphi_1(\alpha))}{\omega_1(\alpha)},\\
\vartheta_7(\alpha):=&~\vartheta_3(\alpha)
-\frac{\omega_2(\alpha)(1+\omega_4(\alpha))+\sigma^+_{xx}(-\vartheta^+_1(\alpha),0;\alpha)(1+\omega_4(\alpha))^2/2+\mathcal{O}(\varphi_1(\alpha))}{\omega_1(\alpha)}.
\end{aligned}
$$
Note that
$$
\sigma^+_{xx}(-\vartheta^+_1(\alpha),0;\alpha)=\sigma^+_{xx}(-a,0;0)+\mathcal{O}(\|\alpha\|)=\frac{g^+_x(-a,0;0)\lambda^+(0)}{g^+(b,c;0)}+\mathcal{O}(\|\alpha\|)
$$
by Lemma~\ref{Transipero}. This, together with (\ref{qq485dscs}), (\ref{4389vj}), (\ref{4faa93t9csd}), (\ref{493t9csdadd}), (\ref{32vsdvsd}) and (\ref{38dad27rcsdvs}), yields the expansions of $\vartheta_6(\alpha)$ and $\vartheta_7(\alpha)$ given in (\ref{4i57vjf}).

Due to $\lambda^-(0)>0, \Delta>0$ and $g^+(b,c;0)<0$, we can take $U_8\subset U_{81}\cap U_{82}\cap U_{83}$ such that $\omega_1(\alpha)>0$ for $\alpha\in U_8$. Consequently, this lemma holds from (\ref{4839dssdc}), (\ref{woioks3453}) and (\ref{woioks345afafa}).
\end{proof}

Based on Lemmas~\ref{dvn23fs}, \ref{458vsvsdf}, \ref{rut45scd} and \ref{3829798jcdf}, we now give the result on the crossing cycles, critical crossing cycles, sliding cycles and various connections bifurcating from $\Gamma_0$ in the following lemma.
\begin{lm}\label{afa945fvvdf}
Under the assumption of Theorem~\ref{codim-2-fold-bifur1} and $f^+(-a,0;0)>0$, for any sufficiently small annulus $\mathcal{A}$ of $\Gamma_0$ there exists a neighborhood $U\subset U_8$ of $\alpha=0$ such that the following statements hold for $\alpha\in U$.
\begin{itemize}
\item[{\rm(1)}] System $(\ref{Z2system})$ has at most two crossing cycles in $\mathcal{A}$, which do not enclose $T^\pm_u$ and $T^\pm_l$. Moreover, there are exactly two crossing cycles if and only if $\varphi_1(\alpha)>0$ and $0<\varphi_2(\alpha)<\vartheta_4(\alpha)\varphi_1^2(\alpha)$, where the inner $($resp. outer$)$ one is hyperbolic and unstable $($resp. stable$)$. There is exactly one crossing cycles if and only if one of the following conditions:
    \begin{itemize}
    \item[{\rm(1a)}] $\varphi_1(\alpha)>0$ and $\varphi_2(\alpha)=0$, for which the crossing cycle is of multiplicity two and internally unstable;
    \item[{\rm(1b)}] $\varphi_1(\alpha)>0$ and $\varphi_2(\alpha)\ge\vartheta_4(\alpha)\varphi_1^2(\alpha)$ or $\varphi_1(\alpha)=0$ and $\varphi_2(\alpha)>0$ or $\varphi_1(\alpha)<0$ and $\varphi_2(\alpha)>\vartheta_3(\alpha)\varphi_1^2(\alpha)$, for which the crossing cycle is hyperbolic and unstable.
    \end{itemize}
\item[{\rm(2)}] The forward orbit of the upper subsystem of $(\ref{Z2system})$ starting at $T^-_u$ evolves into $\Sigma^+\cap\mathcal{A}$ and cannot return to $\Sigma$ if $\varphi_1(\alpha)\ge0$ and $\varphi_2>\vartheta_5(\alpha)\varphi_1^2(\alpha)$, while if $\varphi_1(\alpha)\ge0$ and $\varphi_2\le\vartheta_5(\alpha)\varphi_1^2(\alpha)$, it returns to $\Sigma$ again at
\begin{itemize}
\item[{\rm(2a)}] $T^+_u$ for $\varphi_1(\alpha)\ge0$ and $\varphi_2(\alpha)=\vartheta_5(\alpha)\varphi_1^2(\alpha)$, giving rise to a tangent-tangent connection. In particular, this orbit together with its $\mathbb{Z}_2$-symmetric counterpart constitutes an internally unstable critical crossing cycle for $\varphi_1(\alpha)=0$ and $\varphi_2(\alpha)=0$.
\item[{\rm(2b)}] a point of $C^+_d$ for $\varphi_1(\alpha)=0$ and $\varphi_2(\alpha)<0$.
\item[{\rm(2c)}] a point of $\{(x,0): \varpi(\alpha)<x<\vartheta^+_1(\alpha)\}\subset S^+_3$ for $\varphi_1(\alpha)>0$ and $\vartheta_6(\alpha)\varphi_1^2(\alpha)<\varphi_2(\alpha)<\vartheta_5(\alpha)\varphi_1^2(\alpha)$, giving rise to a tangent-tangent connection with sliding.
\item[{\rm(2d)}] $E^+_p$ for $\varphi_1(\alpha)>0$ and $\varphi_2(\alpha)=\vartheta_6(\alpha)\varphi_1^2(\alpha)$, giving rise to a tangent-equilibrium connection.
\item[{\rm(2e)}] a point of $\{(x,0): \vartheta^-_1(\alpha)<x<\varpi(\alpha)\}\subset S^+_3$ for $\varphi_1(\alpha)>0$ and $\vartheta_4(\alpha)\varphi_1^2(\alpha)<\varphi_2(\alpha)<\vartheta_6(\alpha)\varphi_1^2(\alpha)$, giving rise to a tangent-tangent connection with sliding. Moreover, this orbit together with its $\mathbb{Z}_2$-symmetric counterpart constitutes a stable sliding cycle.
\item[{\rm(2f)}] $T^+_l$ for $\varphi_1(\alpha)>0$ and $\varphi_2(\alpha)=\vartheta_4(\alpha)\varphi_1^2(\alpha)$, giving rise to a tangent-tangent connection. Moreover, this orbit together with its $\mathbb{Z}_2$-symmetric counterpart constitutes an internally stable critical crossing cycle.
\item[{\rm(2g)}] a point of $C^+_d$ for $\varphi_1(\alpha)>0$ and $\varphi_2(\alpha)<\vartheta_4(\alpha)\varphi_1^2(\alpha)$.
\end{itemize}
\item[{\rm(3)}] The backward orbit of the upper subsystem of $(\ref{Z2system})$ starting at $T^+_u$ evolves into $\Sigma^+\cap\mathcal{A}$ and cannot return to $\Sigma$ if $\varphi_1(\alpha)<0$ and $\varphi_2(\alpha)<\vartheta_5(\alpha)\varphi_1^2(\alpha)$, while if $\varphi_1(\alpha)<0$ and $\varphi_2(\alpha)\ge\vartheta_5(\alpha)\varphi_1^2(\alpha)$, it returns to $\Sigma$ again at
\begin{itemize}
\item[{\rm(3a)}] $T^-_u$ for $\varphi_1(\alpha)<0$ and $\varphi_2(\alpha)=\vartheta_5(\alpha)\varphi_1^2(\alpha)$, giving rise to a tangent-tangent connection.
\item[{\rm(3b)}] a point of $\{(x,0): -\vartheta^-_1(\alpha)<x<-\varpi(\alpha)\}\subset S^-_3$ for $\varphi_1(\alpha)<0$ and $\vartheta_5(\alpha)\varphi_1^2(\alpha)<\varphi_2(\alpha)<\vartheta_7(\alpha)\varphi_1^2(\alpha)$, giving rise to a tangent-tangent connection with sliding.
\item[{\rm(3c)}] $E^-_p$ for $\varphi_1(\alpha)<0$ and $\varphi_2(\alpha)=\vartheta_7(\alpha)\varphi_1^2(\alpha)$, giving rise to a tangent-equilibrium connection.
\item[{\rm(3d)}] a point of $\{(x,0): -\varpi(\alpha)<x<-\vartheta^+_1(\alpha)\}\subset S^-_3$ for $\varphi_1(\alpha)<0$ and $\vartheta_7(\alpha)\varphi_1^2(\alpha)<\varphi_2(\alpha)<\vartheta_3(\alpha)\varphi_1^2(\alpha)$, giving rise to a tangent-tangent connection with sliding. Moreover, this orbit together with its $\mathbb{Z}_2$-symmetric counterpart constitutes an unstable sliding cycle.
\item[{\rm(3e)}] $T^-_l$ for $\varphi_1(\alpha)<0$ and $\varphi_2(\alpha)=\vartheta_3(\alpha)\varphi_1^2(\alpha)$, giving rise to a tangent-tangent connection. Moreover, this orbit together with its $\mathbb{Z}_2$-symmetric counterpart constitutes an internally unstable critical crossing cycle.
\item[{\rm(3f)}] a point of $C^-_u$ for $\varphi_1(\alpha)<0$ and $\varphi_2(\alpha)>\vartheta_3(\alpha)\varphi_1^2(\alpha)$.
\end{itemize}
\end{itemize}
\end{lm}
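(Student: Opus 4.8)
The plan is to reduce every assertion to the transition data already assembled in Lemmas~\ref{dvn23fs}--\ref{3829798jcdf}, using the $\mathbb{Z}_2$-symmetry to glue single-subsystem arcs into global configurations, and then to read off the answer by locating the endpoint of each relevant orbit among the landmarks $T^\pm_u$, $T^\pm_l$, $E^\pm_p$ and the boundaries of $C^\pm_d$, $C^\pm_u$, $S^\pm_3$ produced in Lemma~\ref{dvn23fs}. Throughout I work in a neighborhood $U\subset U_8$ and use the maps $\sigma^\pm$ together with $\mathcal{T}_2(x;\alpha)=\sigma^+(x,0;\alpha)-\sigma^-(-x,0;\alpha)$, whose zeros were located in Lemma~\ref{rut45scd}.

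For statement (1): by the $\mathbb{Z}_2$-symmetry a crossing cycle of $(\ref{Z2system})|_{\mathcal{A}}$ meets $\Sigma$ at a point $(x,0)$ on the upward crossing segment $C^-_u=\mathcal{I}_1$ near $-a$ and at its mirror $(-x,0)\in C^+_d$, and its $\Sigma^+$-arc from $(x,0)$ to $(-x,0)$ together with the mirror arc forms the whole cycle; hence crossing cycles in $\mathcal{A}$ are in bijection with the zeros of $\mathcal{T}_2$ lying in $\mathcal{I}_1$. Lemma~\ref{rut45scd} tells exactly which of $x^\star_0,x^\star_\pm$ lie in $\mathcal{I}_1$ for each sign pattern of $(\varphi_1,\varphi_2)$, which immediately yields the count $0,1$ or $2$ and conditions (1a)--(1b). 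For hyperbolicity and stability, write $P$ for the $\Sigma^+$-half-return from a section near $-a$ to one near $a$; then $\sigma^+(x,0;\alpha)=\sigma^-(P(x),0;\alpha)$, so at a zero $x^\star$ one has $P(x^\star)=-x^\star$ and $\mathcal{T}_{2x}(x^\star;\alpha)=\sigma^-_x(-x^\star,0;\alpha)\,(1+P'(x^\star))$, where $\sigma^-_x(-x^\star,0;\alpha)>0$ because $(-x^\star,0)\in C^+_d$ lies to the left of the fold $T^+_u$, on which $\sigma^-$ is increasing. The full first return map near $-a$ is $R(x)=-P(-P(x))$, so $R'(x^\star)=P'(x^\star)^2$, and the cycle is stable iff $|P'(x^\star)|<1$. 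Expanding $P'$ near the two folds and invoking {\bf(H3)} (which forces $g^+_x(-a,0;0)\lambda(0)>g^+_x(a,0;0)$), together with the signs $\mathcal{T}_{2x}(x^\star_-;\alpha)>0>\mathcal{T}_{2x}(x^\star_+;\alpha)$ and $\mathcal{T}_{2xx}(x^\star_0;\alpha)<0$ from Lemma~\ref{458vsvsdf}, gives $|P'(x^\star_-)|<1<|P'(x^\star_+)|$ and $|P'(x^\star_0)|=1$, which identifies the outer zero $x^\star_-$ with the stable hyperbolic cycle, the inner $x^\star_+$ with the unstable one, and $x^\star_0$ with an internally unstable double cycle.

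For statement (2): the forward orbit of the upper subsystem issuing from the visible fold $T^-_u$ enters $\Sigma^+\cap\mathcal{A}$ and reaches $\Pi$ at $\sigma^+(-\vartheta^-_1(\alpha),0;\alpha)$; since $\sigma^-(\cdot,0;\alpha)$ has a strict maximum at $T^+_u$ (as $\sigma^-_{xx}(a,0;0)<0$), this orbit returns to $\Sigma$ near $a$ precisely when $\sigma^+(-\vartheta^-_1(\alpha),0;\alpha)\le\sigma^-(\vartheta^+_1(\alpha),0;\alpha)$, i.e. $\varphi_2\le\vartheta_5(\alpha)\varphi_1^2(\alpha)$ by Lemma~\ref{3829798jcdf}(1), and otherwise stays in $\Sigma^+\cap\mathcal{A}$ without returning, which is the dichotomy stated in (2). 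When it returns, its endpoint is placed relative to $T^+_u$, $E^+_p$, $T^+_l$ and $C^+_d$ by successively comparing $\sigma^+(-\vartheta^-_1(\alpha),0;\alpha)$ with $\sigma^-$ at these points: the comparison with $T^+_u$ is Lemma~\ref{3829798jcdf}(1), with $E^+_p$ is Lemma~\ref{3829798jcdf}(2), with $T^+_l$ amounts to the sign of $\mathcal{T}_2(-\vartheta^-_1(\alpha);\alpha)$, which equals that of $\varphi_2-\vartheta_4(\alpha)\varphi_1^2(\alpha)$ as in the proofs of Lemmas~\ref{rut45scd} and \ref{3829798jcdf}, and with $\partial C^+_d$ is the monotonicity of $\sigma^-$. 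Reading off the local Filippov dynamics from Lemma~\ref{dvn23fs} --- an endpoint on a fold gives a tangent-tangent connection, on $E^+_p$ a tangent-equilibrium connection, on $S^+_3$ a connection with sliding whose continuation follows the sliding flow toward or away from the pseudo-saddle $E^+_p$, on $C^+_d$ an ordinary crossing arc --- and then closing the configuration with its $\mathbb{Z}_2$-symmetric copy yields (2a)--(2g), the stability of the resulting sliding and critical crossing cycles coming from the (un)stability of $S^\pm_3$ together with the transversality estimates $\partial\mathcal{T}_1/\partial x=1+\mathcal{O}(\|\alpha\|)>0$ used in the earlier lemmas. Statement (3) is proved identically, now tracing the backward orbit of the upper subsystem from $T^+_u$ into $\Sigma^+$ and locating its endpoint near $-a$ among $T^-_u$, $E^-_p$, $T^-_l$, $C^-_u$ via Lemma~\ref{3829798jcdf}(1),(3) and the sign of $\mathcal{T}_2(-\vartheta^+_1(\alpha);\alpha)$, using the symmetry to assemble the unstable sliding cycle and the critical crossing cycles.

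The step I expect to be the main obstacle is the stability claim in (1): the only information at hand about $\mathcal{T}_2$ is scalar (signs of its first and second $x$-derivatives at the zeros), and turning this into genuine statements about the two-sided Poincar\'e return map of a symmetric crossing cycle requires controlling the half-return $P$ where it simultaneously grazes the fold near $-a$, so that $\sigma^+_x\to0$, and approaches the fold near $a$, so that $\sigma^-$ is folded and $P'$ can blow up; the delicate point is that the leading balance of these two effects is precisely the non-degeneracy inequality {\bf(H3)}, and it is this that separates the stable outer cycle from the unstable inner one. A secondary, bookkeeping-heavy difficulty in (2)--(3) is verifying that an endpoint lying in $S^\pm_3$ really closes up into a cycle or into a genuine connection, rather than producing an orbit that slides past a pseudo-equilibrium and is absorbed, which forces careful use of the sliding-flow directions relative to $E^\pm_p$ recorded in Lemma~\ref{dvn23fs}.
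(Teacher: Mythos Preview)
Your plan matches the paper's proof: the paper also reduces (1) to the location of the zeros of $\mathcal{T}_2$ in $\mathcal{I}_1$ via Lemmas~\ref{458vsvsdf}--\ref{rut45scd}, and reduces (2)--(3) to the three sign comparisons of Lemma~\ref{3829798jcdf} together with the sign of $\mathcal{T}_2(-\vartheta^\pm_1(\alpha);\alpha)$, after first recording the ordering
\[
\vartheta_4(\alpha)<\vartheta_6(\alpha)<\vartheta_5(\alpha)<\vartheta_7(\alpha)<\vartheta_3(\alpha),
\]
which you use implicitly but should state, since it is what makes the case splits in (2a)--(2g) and (3a)--(3f) disjoint and exhaustive.

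Two small corrections. First, your ``main obstacle'' is easier than you fear, and does not hinge on {\bf(H3)}. From $x^\star\in\mathcal{I}_1$ one has $g^+(x^\star,0;\alpha)>0$, hence $\sigma^+_x(x^\star,0;\alpha)<0$ by \eqref{urvnjfhp}; and from $-x^\star\in C^+_d$ one has $g^+(-x^\star,0;\alpha)<0$, hence $\sigma^-_x(-x^\star,0;\alpha)>0$. Thus $P'(x^\star)=\sigma^+_x/\sigma^-_x<0$ always, so your identity $\mathcal{T}_{2x}=\sigma^-_x\,(1+P')$ gives directly $|P'(x^\star)|<1\Longleftrightarrow\mathcal{T}_{2x}(x^\star)>0$, with no expansion or balance argument required; this is exactly the assertion the paper makes in one line (``the multiplicity and stability of a crossing cycle are the same as the multiplicity and stability of the corresponding zero of $\mathcal{T}_2$''). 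The role of {\bf(H3)} is upstream, in forcing $\mathcal{T}_{2xx}(x^\star_0;\alpha)<0$ via Lemma~\ref{458vsvsdf}, not in the stability read-off. Second, your appeal to ``$\partial\mathcal{T}_1/\partial x=1+\mathcal{O}(\|\alpha\|)$'' is a slip: $\mathcal{T}_1$ is defined through Lemma~\ref{dafsferwerc}, which needs $g^+(a,0;0)<0$ and is unavailable under {\bf(H2)$'$}. The stability of the sliding and critical crossing cycles in (2e), (2f), (3d), (3e) should instead be read off from the stability of $S^\pm_3$ in Lemma~\ref{dvn23fs} together with the sign of $\mathcal{T}_{2x}$ at the relevant zero, exactly as for (1).
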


\begin{proof}
For any sufficiently small annulus $\mathcal{A}$ of $\Gamma_0$, letting $\varepsilon>0$ be the constant such that $\mathcal{A}\cap\Sigma=\{(x,0):|x+a|<\varepsilon\}\cup\{(x,0):|x-a|<\varepsilon\}$, we can ensure $\varepsilon<\min\{\varepsilon_1,\varepsilon_7\}$, where $\varepsilon_1$ and $\varepsilon_7$ are given in Lemma~\ref{cnkrcdwerc} and Lemma~\ref{rut45scd} respectively. Then,
according to the dynamics on $\Sigma$ obtained in Lemma~\ref{dvn23fs} and the definition of the map $\mathcal{T}_2(x;\alpha)$ in (\ref{95vvcs}), there is a neighborhood $U\subset U_8$ of $\alpha=0$ such that for $\alpha\in U$ the crossing cycles of system (\ref{Z2system}) in $\mathcal{A}$ are in one-to-one correspondence with the zeros of $\mathcal{T}_2(x;\alpha)$ in $\{x:\max\{-\vartheta^-_1(\alpha),-\vartheta^+_1(\alpha)\}<x<-a+\varepsilon\}\subset\mathcal{I}_1$. Moreover, the multiplicity and stability of a crossing cycle are the same as the multiplicity and stability of the corresponding zero of $\mathcal{T}_2(x;\alpha)$. As a result, we can get statement (1) from Lemma~\ref{458vsvsdf} and Lemma \ref{rut45scd} directly.

Since {\bf (H1)}, {\bf(H2)}$^\prime$, {\bf(H3)}, $f^+(-a,0;0)>0$ and $\lambda(0)>0$, we get
\begin{equation}\label{3892fcsddsgsdfds}
\vartheta_4(\alpha)<\vartheta_6(\alpha)<\vartheta_5(\alpha)<\vartheta_7(\alpha)<\vartheta_3(\alpha)
\end{equation}
for $\alpha\in U$.
Therefore, it follows from Lemma~\ref{rut45scd} and Lemma~\ref{3829798jcdf} that
$$
\left\{
\begin{aligned}
&\sigma^+(-\vartheta^-_1(\alpha),0;\alpha)>\sigma^-(\vartheta^+_1(\alpha),0;\alpha)  &&~~~~{\rm if}~ \varphi_1(\alpha)=0,\varphi_2(\alpha)>0,\\
&\sigma^+(-\vartheta^-_1(\alpha),0;\alpha)=\sigma^-(\vartheta^+_1(\alpha),0;\alpha)  &&~~~~{\rm if}~ \varphi_1(\alpha)=0,\varphi_2(\alpha)=0,\\
&\sigma^+(-\vartheta^-_1(\alpha),0;\alpha)<\sigma^-(\vartheta^+_1(\alpha),0;\alpha)  &&~~~~{\rm if}~ \varphi_1(\alpha)=0,\varphi_2(\alpha)<0,\\
\end{aligned}
\right.
$$

$$
\left\{
\begin{aligned}
&\sigma^+(-\vartheta^-_1(\alpha),0;\alpha)>\sigma^-(\vartheta^+_1(\alpha),0;\alpha)  &&~~~~{\rm if}~ \varphi_1(\alpha)>0,\varphi_2(\alpha)>\vartheta_5(\alpha)\varphi_1^2(\alpha),\\
&\sigma^+(-\vartheta^-_1(\alpha),0;\alpha)=\sigma^-(\vartheta^+_1(\alpha),0;\alpha)  &&~~~~{\rm if}~ \varphi_1(\alpha)>0,\varphi_2(\alpha)=\vartheta_5(\alpha)\varphi_1^2(\alpha),\\
&\sigma^+(-\vartheta^-_1(\alpha),0;\alpha)\in(\sigma^-(\varpi(\alpha),0;\alpha),\sigma^-(\vartheta^+_1(\alpha),0;\alpha)) &&~~~~ {\rm if}~\varphi_1(\alpha)>0,\\ &~~~~~&&~~~~\vartheta_6(\alpha)\varphi_1^2(\alpha)<\varphi_2(\alpha)<\vartheta_5(\alpha)\varphi_1^2(\alpha),\\
&\sigma^+(-\vartheta^-_1(\alpha),0;\alpha)=\sigma^-(\varpi(\alpha),0;\alpha)\quad &&~~~~ {\rm if}~\varphi_1(\alpha)>0,\varphi_2(\alpha)=\vartheta_6(\alpha)\varphi_1^2(\alpha),\\
&\sigma^+(-\vartheta^-_1(\alpha),0;\alpha)\in(\sigma^-(\vartheta^-_1(\alpha),0;\alpha),\sigma^-(\varpi(\alpha),0;\alpha))\quad &&~~~~ {\rm if}~\varphi_1(\alpha)>0,\\
&~~~~~~~~~~~~~~~~~~~~~~~~~~~~~~~~~~~&&~~~~\vartheta_4(\alpha)\varphi_1^2(\alpha)<\varphi_2(\alpha)<\vartheta_6(\alpha)\varphi_1^2(\alpha),\\
&\sigma^+(-\vartheta^-_1(\alpha),0;\alpha)=\sigma^-(\vartheta^-_1(\alpha),0;\alpha)\quad &&~~~~ {\rm if}~\varphi_1(\alpha)>0,\varphi_2(\alpha)=\vartheta_4(\alpha)\varphi_1^2(\alpha),\\
&\sigma^+(-\vartheta^-_1(\alpha),0;\alpha)<\sigma^-(\vartheta^-_1(\alpha),0;\alpha)\quad &&~~~~ {\rm if}~\varphi_1(\alpha)>0,\varphi_2(\alpha)<\vartheta_4(\alpha)\varphi_1^2(\alpha),\\
\end{aligned}
\right.
$$
and
$$
\left\{
\begin{aligned}
&\sigma^-(\vartheta^+_1(\alpha),0;\alpha)<\sigma^+(-\vartheta^+_1(\alpha),0;\alpha)\qquad &&~~ {\rm if}~\varphi_1(\alpha)<0,\varphi_2(\alpha)>\vartheta_3(\alpha)\varphi_1^2(\alpha),\\
&\sigma^-(\vartheta^+_1(\alpha),0;\alpha)=\sigma^+(-\vartheta^+_1(\alpha),0;\alpha)\quad &&~~ {\rm if}~\varphi_1(\alpha)<0,\varphi_2(\alpha)=\vartheta_3(\alpha)\varphi_1^2(\alpha),\\
&\sigma^-(\vartheta^+_1(\alpha),0;\alpha)\in(\sigma^+(-\vartheta^+_1(\alpha),0;\alpha),\sigma^+(-\varpi(\alpha),0;\alpha))\quad &&~~ {\rm if}~\varphi_1(\alpha)<0,\\
&~~~~~~~~~~~~~~~~~~~~~~~~~~~~~~~~~~~&&~~\vartheta_7(\alpha)\varphi_1^2(\alpha)<\varphi_2(\alpha)<\vartheta_3(\alpha)\varphi_1^2(\alpha),\\
&\sigma^-(\vartheta^+_1(\alpha),0;\alpha)=\sigma^+(-\varpi(\alpha),0;\alpha)\quad &&~~ {\rm if}~\varphi_1(\alpha)<0,\varphi_2(\alpha)=\vartheta_7(\alpha)\varphi_1^2(\alpha),\\
&\sigma^-(\vartheta^+_1(\alpha),0;\alpha)\in(\sigma^+(-\varpi(\alpha),0;\alpha),\sigma^+(-\vartheta^-_1(\alpha),0;\alpha)) &&~~ {\rm if}~\varphi_1(\alpha)<0,\\ &~~~~~&&~~\vartheta_5(\alpha)\varphi_1^2(\alpha)<\varphi_2(\alpha)<\vartheta_7(\alpha)\varphi_1^2(\alpha),\\
&\sigma^-(\vartheta^+_1(\alpha),0;\alpha)=\sigma^+(-\vartheta^-_1(\alpha),0;\alpha)  &&~~{\rm if}~ \varphi_1(\alpha)<0,\varphi_2(\alpha)=\vartheta_5(\alpha)\varphi_1^2(\alpha),\\
&\sigma^-(\vartheta^+_1(\alpha),0;\alpha)>\sigma^+(-\vartheta^-_1(\alpha),0;\alpha)  &&~~{\rm if}~ \varphi_1(\alpha)<0,\varphi_2(\alpha)<\vartheta_5(\alpha)\varphi_1^2(\alpha).
\end{aligned}
\right.
$$
These conclude statements (2) and (3), considering the forward (resp. backward) orbit of the upper subsystem of $(\ref{Z2system})$ starting at $T^-_u$ (resp. $T^+_u$) and recalling the definitions of $\sigma^\pm(x,y;\alpha)$ in Lemma~\ref{cnkrcdwerc} and the dynamics on $\Sigma$ stated in Lemma~\ref{dvn23fs}.
\end{proof}

Having these preliminaries, we now can give the proof of Theorem~\ref{codim-2-fold-bifur1}.

\begin{proof}[{\bf Proof of Theorem~\ref{codim-2-fold-bifur1}}]
We only prove Theorem~\ref{codim-2-fold-bifur1} for $f^+(-a,0;0)>0$ because of similarity. For any sufficiently small annulus $\mathcal{A}$ of $\Gamma_0$, we take $U$ as the neighborhood of $\alpha=0$ given in Lemma~\ref{afa945fvvdf}, $\varphi_1(\alpha)$ and $\varphi_2(\alpha)$ for $\alpha\in U$ as the functions given in (\ref{qq485dscs}) and (\ref{ew8csdr}) respectively.
If $(\mu_1,\mu_2,\cdots,\mu_m)$ and $(\kappa_1,\kappa_2,\cdots,\kappa_m)$ are linearly independent, there are $j_1,j_2\in\{1,2,\cdots,m\}$ such that
\begin{equation}\label{cfw34wefw123}
\mu_{j_1}\kappa_{j_2}-\kappa_{j_1}\mu_{j_2}\ne0.
\end{equation}
Let
\begin{equation}\label{cdsn34afr123}
\varphi_i(\alpha):=\alpha_{j_i},\quad i=3,4,\cdots,m
\end{equation}
such that $\alpha_{j_i}\ne\alpha_{j_1}$, $\alpha_{j_i}\ne\alpha_{j_2}$ for all $i=3,4,\cdots,m$ and $\alpha_{j_k}\ne\alpha_{j_l}$ for $k\ne l$. Then the Jacobian matrix of
\begin{equation}\label{cdsn3465123}
\beta=(\beta_1,\beta_2,\cdots\beta_m)=(\varphi_1(\alpha),\varphi_2(\alpha),\cdots, \varphi_m(\alpha))
\end{equation}
is nonsingular at $\alpha=0$ from (\ref{qq485dscs}), (\ref{ew8csdr}), (\ref{cfw34wefw123}) and (\ref{cdsn34afr123}). Thus (\ref{cdsn3465123}) is a diffeomorphism from $U$ to its range $V$, where $U$ can be reduced if necessary.
Finally, taking
\begin{eqnarray*}
\begin{aligned}
CS^+:&=\Big\{\beta\in V:\beta_1>0,\beta_2=\vartheta_4(\alpha^{-1}(\beta))\beta_1^2\Big\},\\
SH^+:&=\Big\{\beta\in V:\beta_1>0,\beta_2=\vartheta_6(\alpha^{-1}(\beta))\beta_1^2\Big\},\\
TC^+:&=\Big\{\beta\in V:\beta_1>0,\beta_2=\vartheta_5(\alpha^{-1}(\beta))\beta_1^2\Big\},\\
CS^-:&=\Big\{\beta\in V:\beta_1<0,\beta_2=\vartheta_3(\alpha^{-1}(\beta))\beta_1^2\Big\},\\
SH^-:&=\Big\{\beta\in V:\beta_1<0,\beta_2=\vartheta_7(\alpha^{-1}(\beta))\beta_1^2\Big\},\\
TC^-:&=\Big\{\beta\in V:\beta_1<0,\beta_2=\vartheta_5(\alpha^{-1}(\beta))\beta_1^2\Big\},
\end{aligned}
\end{eqnarray*}
where
$\alpha^{-1}(\beta)$ denotes the inverse of (\ref{cdsn3465123}), and combining (\ref{3892fcsddsgsdfds}), the dynamics on $\Sigma$ and the information of various cycles and connections given in Lemma~\ref{dvn23fs} and Lemma~\ref{afa945fvvdf} respectively, we get that for any $(\beta_3^*,\beta_4^*,\cdots,\beta_m^*)\in V^*$ the bifurcation diagram of system $\left.(\ref{Z2system})\right|_{\mathcal{A}}$ on the hyperplane $(\beta_3,\beta_4,\cdots,\beta_m)=(\beta_3^*,\beta_4^*,\cdots,\beta_m^*)$ is the one shown in Figure~\ref{codim-2bifurdia2folddia} when $f^+(-a,0;0)>0$.
\end{proof}

\section*{Declaration of competing interest}
The authors declare that they have no known competing financial interests or personal relationships that could have appeared
to influence the work reported in this paper.

\section*{Data availability}
No data was used for the research described in the article.

{\footnotesize

}


\begin{thebibliography}{99}



\bibitem{AGN}
K. S. Andrade, O. M. L. Gomide, D. D. Novaes,
Bifurcation diagrams of global connections in Filippov systems,
{\it Nonlin. Anal.: Hybrid Syst.} {\bf 50} (2023), 101397.

\bibitem{AAK}
A. A. Andronov, A. A. Vitt, S. E. Khaikin,
{\it Theory of Oscillators}, Pergamon Press, Oxford, 1966.

\bibitem{JJCA}
J. J. Callahan,
{\it Advanced Calculus: A Geometric View},
Springer, New York, 2010.

\bibitem{CWPGT}
Q. Cao, M. Wiercigroch, E. E. Pavlovskaia, C. Grebogi, J. M. T. Thompson,
Archetypal oscillator for smooth and discontinuous dynamics,
{\it Phys. Rev. E} {\bf 74} (2006), 046218.

\bibitem{CFL}
X. Chen, Z. Fang, T. Li,
Grazing-sliding bifurcations in planar $Z_2$-symmetric Filippov systems,
{\it J. Differential Equ.} {\bf453} (2026), 113820.

\bibitem{AC}
A. Colombo, F. Dercole,
Discontinuity induced bifurcations of nonhyperbolic cycles in nonsmooth systems,
{\it SIAM J. Appl. Dyn. Syst.} {\bf 9} (2010), 62-83.

\bibitem{MD}
M. di Bernardo, C. J. Budd, A. R. Champneys, P. Kowalczyk,
{\it Piecewise-Smooth Dynamical Systems: Theory and Applications},
Applied Mathematical Sciences, Vol.163, Springer Verlag, London, 2008.

\bibitem{BJV}
M. di Bernardo, K. H. Johansson, F. Vasca,
Self-oscillations and sliding in relay feedback systems: symmetry and bifurcations,
{\it Int. J. Bifurc. Chaos} {\bf11} (2001), 1121-1140.

\bibitem{MBPK}
M. di Bernardo, P. Kowalczyk, A. Nordmark,
Bifurcations of dynamical systems with sliding:derivation of normal-form mappings,
{\it Physica D} {\bf170} (2002), 175-205.

\bibitem{FC}
Z. Fang, X. Chen,
Bifurcations of double critical homoclinic loops connecting one tangent point,
{\it Int. J. Bifur. Chaos} {\bf 35} (2025), 2550132.

\bibitem{FC2}
Z. Fang, X. Chen, Bifurcations of tangent points and critical loops in piecewise-smooth systems,
{\it J. Nonlinear Sci.} {\bf 35} (2025), 117.

\bibitem{AFF}
A. F. Filippov,
{\it Differential Equations with Discontinuous Righthand Sides},
Kluwer Academic Publishers, Dordrecht, 1988.

\bibitem{EFEPFT}
E. Freire, E. Ponce, F. Torres,
On the critical crossing cycle bifurcation in planar Filippov systems,
{\it J. Differential Equ.} {\bf259} (2015), 7086-7107.

\bibitem{GP}
F. Giannakopoulos, K. Pliete,
Planar systems of piecewise linear differential equations with a line of discontinuity,
{\it Nonlinearity} {\bf 14} (2001), 1611-1632.


\bibitem{ATTRACTOR1}
P. Glendinning, P. Kowalczyk, A. B. Nordmark,
Attractors near grazing-sliding bifurcations,
{\it Nonlinearity} {\bf25} (2012), 1867-1885.

\bibitem{ATTRACTOR2}
P. Glendinning, P. Kowalczyk, A. B. Nordmark,
Multiple attractors in grazing-sliding bifurcations in Filippov-type flows,
{\it IMA J. Appl. Math.} {\bf81} (2016), 711-722.


\bibitem{MG1}
M. Guardia, T. M. Seara, M. A. Teixeira,
Generic bifurcations of low codimension of planar Filippov systems,
{\it J. Differential Equ.} {\bf 250} (2011), 1967-2023.

\bibitem{JH}
M. R. Jeffrey, S. J. Hogan,
The geometry of generic sliding bifurcations,
{\it SIAM Rev.} {\bf53} (2011), 505-525.


\bibitem{YAK}
Yu. A. Kuznetsov, S. Rinaldi, A. Gragnani,
One parameter bifurcations in planar Filippov systems,
{\it Int. J. Bifur. Chaos} {\bf 13} (2003), 2157-2188.

\bibitem{L}
S. Lefschetz,
{\it Stability of Nonlinear Control Systems},
Academic Press, New York, 1965.

\bibitem{LTXC}
T. Li, X. Chen,
Degenerate grazing-sliding bifurcations in planar Filippov systems,
{\it J. Differential Equ.} {\bf 269} (2020), 11396-11434.

\bibitem{FLMHsfsf}
F. Liang, M. Han, Limit cycles near generalized homoclinic and double homoclinic loops in piecewise smooth systems,
{\it Chaos, Solitons and Fractals} {\bf 45} (2012), 454-464.

\bibitem{FLMH}
F. Liang, M. Han,
The stability of some kinds of generalized homoclinic loops in planar piecewise smooth systems,
{\it Int. J. Bifur. Chaos.} {\bf23} (2013), 1350027.

\bibitem{FLMHXZ}
F. Liang, M. Han, X. Zhang,
Bifurcation of limit cycles from generalized homoclinic loops in planar piecewise smooth systems,
{\it J. Differential Equ.} {\bf255} (2013), 4403-4436.


\bibitem{ABN}
A. B. Nordmark, P. Kowalczyk,
A codimension-two scenario of sliding solutions in grazing-sliding bifurcations,
{\it Nonlinearity} {\bf 19} (2006), 1-26.

\bibitem{NTZ}
D. D. Novaes, M. A. Teixeira, I. O. Zeli,
The generic unfolding of a codimension two connection to a two-fold singularity of planar Filippov systems,
{\it Nonlinearity}  {\bf31} (2018), 2083-2104.


\bibitem{ATTRACTOR3}
D. J. W. Simpson,
Grazing-sliding bifurcations creating infinitely many attractors,
{\it Int. J. Bifur. Chaos} {\bf27} (2017), 1730042.

\bibitem{RSHO}
R. Szalai, H. Osinga,
Arnold tongues arising from a grazing-sliding bifurcation,
{\it SIAM J. Appl. Dyn. Syst.} {\bf 8} (2009), 1434-1461.

\bibitem{TCY}
R. Tian, Q. Cao, S. Yang,
The codimension-two bifurcation for the recent proposed SD oscillator,
{\it Nonlin. Dyn.} {\bf59} (2010), 19-27.

\bibitem{WHW}
F. Wu, L. Huang, J. Wang,
Bifurcation of the critical crossing cycle in a planar piecewise smooth system with two zones,
{\it Discrete Contin. Dyn. Syst. Series B}  {\bf27} (2022), 5047-5083.

\end{thebibliography}
\end{document}